\newtheorem{Def}{\bf Definition}[subsection]
\newtheorem{Thm}[Def]{\bf Theorem}
\newtheorem{Lem}[Def]{\bf Lemma}
\newtheorem{Cor}[Def]{\bf Corollary}
\newtheorem{Pro}[Def]{\bf Proposition}
\newtheorem{Rem}[Def]{\bf Remark}
\newtheorem{ThmA}{\bf Theorem}
\newtheorem{CorA}{\bf Corollary}
\title{\bf On bi-exactness of discrete quantum groups}
\author{Yusuke Isono\thanks{Department of Mathematical Sciences,
University of Tokyo, Komaba, Tokyo, 153-8914, Japan \protect \\  E-mail: \texttt{isono@ms.u-tokyo.ac.jp}}}
\date{}
\begin{document}
\maketitle

\begin{abstract}
We define Ozawa's notion of bi-exactness to discrete quantum groups, and then prove some structural properties of associated von Neumann algebras. In particular, we prove that any non amenable subfactor of free quantum group von Neumann algebras, which is an image of a faithful normal conditional expectation, has no Cartan subalgebras.
\end{abstract}

\section{\bf Introduction}


A countable discrete group $\Gamma$ is said to be \textit{bi-exact} (or said to be in \textit{class} $\cal S$) 
if it is exact and there exists a map $\mu\colon \Gamma \rightarrow \mathrm{Prob}(\Gamma)\subset \ell^1(\Gamma)$ such that  $\limsup_{x\rightarrow\infty}\|\mu(sxt)-s\mu(x)\|_1=0$ for any $s,t\in \Gamma$. 
This notion was introduced and studied by Ozawa in his book with Brown  $\cite[\textrm{Section 15}]{BO08}$. In particular, he gave the following two different characterizations of bi-exactness (Lemma 15.1.4 and Proposition 15.2.3(2) in the book):
\begin{itemize}
	\item[$\rm (i)$] The group $\Gamma$ is exact and the algebra $L\Gamma$ satisfies condition $\rm (AO)^+$ with the dense $C^*$-algebra $C^*_\lambda(\Gamma)$.		
	\item[$\rm (ii)$] There exists a unital $C^*$-subalgebra ${\cal B}\subset \ell^\infty (\Gamma)$ such that  
	\begin{itemize}
		\item[$\bullet$] the algebra $\cal B$ contains $c_0(\Gamma)$ (so that we can define ${\cal B}_\infty:={\cal B}/c_0(\Gamma)$);
		\item[$\bullet$] the left translation action on $\ell^\infty(\Gamma)$ induces an amenable action on ${\cal B}_{\infty}$, and the right one induces the trivial action on ${\cal B}_{\infty}$.
	\end{itemize}
\end{itemize}
Here we recall that a von Neumann algebra $M\subset \mathbb{B}(H)$ with a standard representation satisfies \textit{condition} $\rm (AO)^+$ $\cite[\textrm{Definition 3.1.1}]{Is12_2}$ if there exists a unital $\sigma$-weakly dense locally reflexive $C^*$-subalgebra $A\subset M$ and a unital completely positive (say, u.c.p.) map $\theta\colon A\otimes A^{\rm op} \rightarrow \mathbb{B}(H)$ such that $\theta(a\otimes b^{\rm op})-ab^{\rm op}\in \mathbb{K}(H)$ for any $a,b \in A$. 
Here $A^{\rm op}$ means the opposite algebra of $A$, which acts canonically on $H$ by right.

These characterizations of bi-exactness have been used in different contexts. For example, condition (i) is very close to condition (AO) introduced in $\cite{Oz03}$. In particular Ozawa's celebrated theorem in the same paper says that von Neumann algebras of bi-exact groups are solid, 
meaning that any relative commutant of a diffuse amenable subalgebra is still amenable.
Condition (ii) is used to show that hyperbolic groups are bi-exact. This follows from the fact that all hyperbolic groups act on their Gromov boundaries as amenable actions. 
The definition of bi-exactness itself is also interesting since it is closely related to an \textit{array}, which was introduced and studied in $\cite{CS11}$ and $\cite{CSU11}$.

In the present paper, we generalize bi-exactness to discrete quantum groups in two different ways, which correspond to conditions (i) and (ii). 
This is not a difficult task since condition (i) does not rely on the structure of group $C^*$-algebras (this is a $C^*$-algebraic condition) and all objects in  condition (ii) are easily defined for discrete quantum groups. 
We then study some basic facts on these conditions. In particular, we prove that free products of free quantum groups or quantum automorphism groups are bi-exact, showing that a condition close to bi-exactness is closed under taking free products. 

After these observations, we prove some structural properties of von Neumann algebras associated with bi-exact quantum groups. 
We first give the following theorem which generalizes $\cite[\textrm{Theorem 3.1}]{PV12}$. 
This is a natural generalization from discrete groups to discrete quantum groups of Kac type. 
In the proof, we need only slight modifications since the original proof for groups do not rely on the structures of group von Neumann algebras very much.
Note that a special case of this theorem was already generalized in $\cite{Is12_2}$ for general $\rm II_1$ factors.  
\begin{ThmA}\label{A}
Let $\mathbb{G}$ be a compact quantum group of Kac type, whose dual acts on a tracial von Neumann algebra $B$ as a trace preserving action. Write $M:=\hat{\mathbb{G}}\ltimes B$. 
Let $q$ be a projection in $M$ and $A\subset qMq$ a von Neumann subalgebra, which is amenable relative to $B$ in $M$. Assume that $\hat{\mathbb{G}}$ is weakly amenable and bi-exact. Then we have either one of the following statements:
\begin{itemize}
	\item[$\rm (i)$] We have $A\preceq_{M} B$.
	\item[$\rm (ii)$] The algebra ${\cal N}_{qMq}(A)''$ is amenable relative to $B$ in $M$ (or equivalently, $L^2(qM)$ is left ${\cal N}_{qMq}(A)''$-amenable as a $qMq$-$B$-bimodule.).
\end{itemize}
\end{ThmA}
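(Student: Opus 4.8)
The plan is to transcribe the Popa--Vaes dichotomy argument for $B \rtimes \Gamma$ with $\Gamma$ bi-exact and weakly amenable to the quantum crossed product $M := \hat{\mathbb{G}} \ltimes B$; the Kac type hypothesis guarantees that $M$, and the inclusion $B \subseteq M$, is tracial, which is precisely the setting in which all of the basic-construction and bimodule estimates of \cite{PV12} and \cite{Is12_2} are carried out. Write $P := \mathcal{N}_{qMq}(A)''$, let $\tau$ be the trace on $M$, let $e_B \in \mathbb{B}(L^2(M))$ be the Jones projection, $\langle M, e_B\rangle$ the basic construction with its canonical semifinite trace, and recall $L^2\langle M, e_B\rangle \cong L^2(M) \otimes_B L^2(M)$ as $M$-$M$-bimodules. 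The argument runs under the standing assumption $A \not\preceq_M B$, aiming to derive statement (ii).

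\textbf{Step 1 (boundary structure from bi-exactness).} I would first show that bi-exactness of $\hat{\mathbb{G}}$ forces $M$ to satisfy the version of condition $(\mathrm{AO})^+$ relative to $B$: there exist a $\sigma$-weakly dense, locally reflexive $C^*$-subalgebra $C \subseteq M$ and a u.c.p.\ map $\theta \colon C \otimes_{\mathrm{alg}} C^{\mathrm{op}} \to \mathbb{B}(L^2(M))$ with $\theta(c \otimes d^{\mathrm{op}}) - c d^{\mathrm{op}} \in \mathbb{K}_B$ for all $c, d \in C$, where $\mathbb{K}_B := \overline{M e_B M}^{\|\cdot\|}$ is the ideal of $B$-compact operators, together with amenability of the induced $\hat{\mathbb{G}}$-action on the boundary quotient. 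Concretely this means taking the quantum ``small at infinity'' $C^*$-algebra (the analogue of $\mathcal{B}_\infty \subseteq \ell^\infty(\hat{\mathbb{G}})/c_0(\hat{\mathbb{G}})$ from the second characterization of bi-exactness in the introduction), crossing it by $\hat{\mathbb{G}} \ltimes B$, and checking amenability of that action; the bookkeeping follows the group case because $\hat{\mathbb{G}} \ltimes B$ is manipulated purely through the corepresentation calculus, not through the structure of $L\Gamma$. The output is the bimodule statement that drives everything: combined with Step 2, the $M$-$M$-bimodule $L^2(M) \otimes_B L^2(M)$ is weakly contained in a direct sum of the coarse $M$-$M$-bimodule $L^2(M) \otimes L^2(M)$ and the trivial $M$-$M$-bimodule $L^2(M)$, with the trivial summand absorbing the contribution of $\mathbb{K}_B$ and the coarse summand coming from $\theta$.

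\textbf{Step 2 (weak amenability).} Weak amenability of $\hat{\mathbb{G}}$ provides finitely supported central multipliers on $\mathrm{Irr}(\mathbb{G})$, hence normal, $B$-bimodular, completely bounded maps $\Phi_n \colon M \to M$ that are ``finite-rank relative to $B$'', converge to $\mathrm{id}_M$ pointwise $*$-strongly, satisfy $\sup_n \|\Phi_n\|_{cb} = \Lambda_{cb}(\hat{\mathbb{G}}) < \infty$, and extend compatibly to $\langle M, e_B\rangle$. Composing with the $\Phi_n$ and using Step 1, I would prove the quantum analogue of the key proposition behind \cite[Theorem 3.1]{PV12}: for any von Neumann subalgebra $Q \subseteq qMq$ amenable relative to $B$, a net of $\|\cdot\|_2$-almost-$Q$-central unit vectors in $L^2(M) \otimes_B L^2(M)$ either, after applying the $\Phi_n$, concentrates on a finite $B$-sub-bimodule coming from $\mathbb{K}_B$ --- which by Popa's intertwining criterion forces $Q \preceq_M B$ --- or can be pushed into the coarse summand $L^2(M) \otimes L^2(M)$. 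Since $A$ is amenable relative to $B$ and $A \not\preceq_M B$, we are in the second case.

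\textbf{Step 3 (normalizer averaging).} Being in the coarse case, the hypothesis $A \not\preceq_M B$ is used once more --- via a net of unitaries $(u_k) \subseteq \mathcal{U}(A)$ with $\|E_B(a u_k b)\|_2 \to 0$ for all $a, b \in M$ --- to arrange the almost-$A$-central unit vectors $\xi_n \in L^2(M) \otimes_B L^2(M)$ to be asymptotically orthogonal to the trivial summand. On the coarse part, the displacement of $\xi_n$ under conjugation by a normalizer $v \in \mathcal{N}_{qMq}(A)$ is controlled by the failure of $v^*\xi_n v$ to be $A$-central, which tends to $0$; an averaging argument then yields a net that is almost central for all of $P = \mathcal{N}_{qMq}(A)''$ inside the coarse $qMq$-$B$-bimodule, which is exactly the assertion that $L^2(qM)$ is left $P$-amenable as a $qMq$-$B$-bimodule, i.e.\ statement (ii). The main obstacle is Step 1: one must identify the right quantum ``small at infinity'' boundary of $\hat{\mathbb{G}}$ from the definition of bi-exactness and verify that crossing the associated boundary algebra by $B$ preserves amenability of the action and yields the bimodule weak containment above. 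Once this is in place, Steps 2 and 3 are close to verbatim adaptations of \cite{PV12} and \cite{Is12_2}, since they use nothing about $M$ beyond the tracial crossed-product structure supplied by the Kac type hypothesis.
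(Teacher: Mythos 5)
Your overall architecture (almost-central vectors from relative amenability and weak amenability, a dichotomy driven by an Akemann--Ostrand-type condition, then normalizer averaging) is the right one, but Step 1 --- which you yourself flag as the main obstacle --- is both missing and, as formulated, not available under the hypotheses of the theorem. You propose to build a condition $(\mathrm{AO})^+$ for the crossed product $M=\hat{\mathbb{G}}\ltimes B$ relative to the $B$-compacts $\overline{Me_BM}$ by taking a quantum ``small at infinity'' boundary algebra, crossing it by $\hat{\mathbb{G}}\ltimes B$, and ``checking amenability of that action''. But Theorem \ref{A} only assumes bi-exactness in the sense of Definition \ref{bi-ex}, which supplies nothing more than exactness and a u.c.p.\ map $\theta\colon C_{\rm red}(\mathbb{G})\otimes C_{\rm red}(\mathbb{G})\to\mathbb{B}(L^2(\mathbb{G}))$ with $\theta(a\otimes b)-aUbU\in\mathbb{K}(L^2(\mathbb{G}))$; an amenable action on a boundary quotient is part of \emph{strong} bi-exactness (Definition \ref{st bi-ex}), which the paper is careful to distinguish and which is strictly more than what is assumed. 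So the construction you sketch cannot get started from the stated hypotheses, and no argument is given for the asserted weak containment of $L^2(M)\otimes_B L^2(M)$ in coarse $\oplus$ trivial (which is in any case not quite what the argument needs; the relevant output is a $P$-central state on a $C^*$-algebra containing $q\langle M,e_B\rangle q$ that restricts to $\tau$ on $qMq$).

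The paper sidesteps this problem by a different reduction: the dual coaction $\Delta\colon M\ni bu^x_{i,j}\mapsto\sum_k bu^x_{i,k}\otimes u^x_{k,j}\in M\otimes L^\infty(\mathbb{G})$ (quantum Fell absorption, the analogue of Proposition 3.2 of \cite{PV12}) transports the problem to $\mathcal{A}:=\Delta(A)$ inside $\mathcal{M}:=M\otimes L^\infty(\mathbb{G})$ relative to $M\otimes 1$, i.e.\ to the case of a trivial action, where the only boundary structure needed is condition $(\mathrm{AO})^+$ for $L^\infty(\mathbb{G})$ itself with the ordinary compacts --- exactly what Definition \ref{bi-ex}(ii) provides. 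This reduction rests on two ingredients you do not supply: the compatibility $\Delta\circ E_B=E_{M\otimes 1}\circ\Delta=E_{B\otimes 1}\circ\Delta$, which yields the transfer statements for relative amenability and intertwining, and a Fourier-coefficient characterization of $A\not\preceq_M B$ (a net $(w_n)$ of unitaries in $A$ with $\|(w_n)^x_{i,j}\|_2\to 0$ for all $x\in\mathrm{Irred}(\mathbb{G})$), which is also what powers the ``case 2'' step --- the Kac-type analogue of Lemma \ref{case2} --- showing that if the almost-central vectors do not vanish against $1\otimes\mathbb{K}(L^2(\mathbb{G}))$ then $A\preceq_M B$. Your Steps 2 and 3 are essentially the standard \cite{PV12} machinery and would go through once the reduction is in place, but without either the reduction via $\Delta$ or a correct substitute for your Step 1 the proof is incomplete.
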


The same arguments as in the group case give the following corollary. As we mentioned, free quantum groups and quantum automorphism groups are examples of the corollary (see also Theorem \ref{C}). 
\begin{CorA}
Let $\mathbb{G}$ be a compact quantum group of Kac type. Assume that $\hat{\mathbb{G}}$ is weakly amenable and bi-exact. 
\begin{itemize}
	\item[$\rm (1)$] The algebra $L^\infty(\mathbb{G})$ is strongly solid. Moreover any non amenable von Neumann subalgebra of $L^\infty(\mathbb{G})$ has no Cartan subalgebras.
	\item[$\rm (2)$] If $\hat{\mathbb{G}}$ is non amenable, then $L^\infty(\mathbb{G})\otimes B$ has no Cartan subalgebras for any finite von Neumann algebra $B$.
\end{itemize}
\end{CorA}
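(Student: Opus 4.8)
The plan is to derive both parts from Theorem~\ref{A} by choosing the action on $B$ suitably, running the same arguments as in \cite{PV12}; as the excerpt already stresses, the proof of Theorem~\ref{A} uses only the crossed product structure of $M$, so no feature special to group von Neumann algebras intervenes.

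For part~(1) I would apply Theorem~\ref{A} with $B=\mathbb C$ (the only, trivially trace preserving, action), so that $M=\hat{\mathbb G}\ltimes\mathbb C$ is canonically identified with $L^\infty(\mathbb G)$, which is tracial because $\mathbb G$ is of Kac type. Let $q\in M$ be a projection and $A\subseteq qMq$ diffuse and amenable. Amenability relative to $\mathbb C$ inside $M$ is just amenability, so Theorem~\ref{A} yields either $A\preceq_M\mathbb C$ or that $\mathcal N_{qMq}(A)''$ is amenable. The first cannot happen, since $A\preceq_M\mathbb C$ produces a nonzero finite dimensional $A$-invariant subspace of $L^2(M)$, hence a nonzero finite dimensional direct summand of $A$, contradicting diffuseness. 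Hence $\mathcal N_{qMq}(A)''$ is amenable, which is precisely the strong solidity of $L^\infty(\mathbb G)$. The ``moreover'' part is then the standard deduction: if a non amenable $P\subseteq L^\infty(\mathbb G)$ had a Cartan subalgebra, then after reducing by a projection it would contain a diffuse amenable subalgebra whose normalizer generates a non amenable algebra, contradicting strong solidity.

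For part~(2) I would fix a faithful normal tracial state on $B$ and apply Theorem~\ref{A} to the trivial $\hat{\mathbb G}$-action, so that $M=\hat{\mathbb G}\ltimes B=L^\infty(\mathbb G)\otimes B$. Suppose $q\in M$ is a projection and $A\subseteq qMq$ is a Cartan subalgebra. Since $A$ is abelian it is amenable, hence amenable relative to $B$, and Theorem~\ref{A} gives two cases. In the case where $qMq=\mathcal N_{qMq}(A)''$ is amenable relative to $B$ in $M$: reducing by the central support, $L^\infty(\mathbb G)\otimes B$ is amenable relative to $1\otimes B$; since $\langle M,e_{1\otimes B}\rangle\cong\mathbb B(L^2L^\infty(\mathbb G))\otimes B$ and the coupling $M$-bimodule $L^2\langle M,e_{1\otimes B}\rangle$ is, as an $M$-bimodule, the coarse bimodule of $L^\infty(\mathbb G)$ tensored with the standard bimodule of $B$, weak containment of $L^2(M)$ in it forces, upon restriction to $L^\infty(\mathbb G)\otimes 1$, the trivial bimodule of $L^\infty(\mathbb G)$ to be weakly contained in its coarse bimodule, i.e.\ $L^\infty(\mathbb G)$ is injective and $\hat{\mathbb G}$ is amenable, against the hypothesis. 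In the case $A\preceq_M B$: Popa's intertwining theorem, together with maximal abelianity of $A$ in $M$, yields a ``dual'' intertwining $B'\cap M=L^\infty(\mathbb G)\otimes\mathcal Z(B)\preceq_M A$, hence a normal embedding of a nonzero corner of $L^\infty(\mathbb G)\otimes\mathcal Z(B)$ into the abelian algebra $A$; but no corner of $L^\infty(\mathbb G)$ has a nonzero abelian quotient (it is a $\mathrm{II}_1$ factor for the examples of interest, and the general case reduces to this by a central projection), so this is impossible. Thus $qMq$ has no Cartan subalgebra.

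The main obstacle is the case $A\preceq_M B$ of part~(2): one has to carry out Popa's intertwining carefully to extract the dual intertwining $B'\cap M\preceq_M A$ from the maximal abelianity of $A$, and to dispatch the (harmless for the stated examples) possibility that $L^\infty(\mathbb G)$ is not a factor. The remainder --- identifying the coupling bimodule in the other case and checking that the intertwining-by-bimodules and relative amenability machinery of \cite{PV12} transfers verbatim to $\hat{\mathbb G}\ltimes B$, with the Haar trace in the role of the canonical trace on $L\Gamma$ --- is routine.
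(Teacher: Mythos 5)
Your proposal is correct and follows essentially the same route as the paper, which for this corollary simply invokes the group-case arguments of \cite{PV12} (spelling out the analogue only for the continuous-core Corollary): part (1) is the dichotomy with $B=\mathbb{C}$ plus the observation that a diffuse algebra cannot intertwine into an atomic one, and your two cases in part (2) — relative amenability over $1\otimes B$ forcing injectivity of $L^\infty(\mathbb{G})$ (hence amenability of $\hat{\mathbb{G}}$ in the Kac case), versus the dual intertwining $(1\otimes B)'\cap M\preceq_M A$ via \cite[Lemma 3.5]{Va08} — are exactly what the paper does in its proof of the core version. The one point to tighten is the non-factor case in (2)(i): rather than appealing to $L^\infty(\mathbb{G})$ being a $\mathrm{II}_1$ factor, one should (as the paper does) cut by a central projection $w$ such that $L^\infty(\mathbb{G})w$ has no amenable direct summand, note that $1\otimes w$ lies in $A$ by maximal abelianity, and conclude that intertwining $L^\infty(\mathbb{G})w\otimes\mathcal{Z}(B)$ into the abelian algebra $Az$ would produce an amenable direct summand of $L^\infty(\mathbb{G})w$, a contradiction.
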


We also mention that if a non-amenable, weakly amenable and bi-exact discrete quantum group $\hat{\mathbb{G}}$ admits an action on a commutative von Neumann algebra $L^\infty(X,\mu)$, 
so that $\hat{\mathbb{G}}\ltimes L^\infty(X,\mu)$ is a $\rm II_1$ factor and $L^\infty(X,\mu)$ is a Cartan subalgebra, then $L^\infty(X,\mu)$ is the unique Cartan subalgebra up to unitary conjugacy.
However such an example is not known.

Next we investigate similar results on discrete quantum groups of non Kac type. For this, let us recall condition $\rm (AOC)^+$, which is a similar one to condition $\rm (AO)^+$ on continuous cores. 
We say a von Neumann algebra $M$ and a faithful normal state $\phi$ on $M$ satisfies \textit{condition} $\rm (AOC)^+$ $\cite[\textrm{Definition 3.2.1}]{Is12_2}$ if there exists a unital $\sigma$-weakly dense $C^*$-subalgebra $A\subset M$ such that 
the modular action of $\phi$ gives a norm continuous action on $A$ (so that we can define $A \rtimes_{\rm r} \mathbb{R}$), $A \rtimes_{\rm r} \mathbb{R}$ is locally reflexive, and there exists a u.c.p.\ map 
$\theta\colon (A\rtimes_{\rm r}\mathbb{R})\otimes (A\rtimes_{\rm r}\mathbb{R})^{\rm op} \rightarrow \mathbb{B}(L^2(M,\phi)\otimes L^2(\mathbb{R}))$
such that  $\theta(a\otimes b^{\rm op})-ab^{\rm op}\in \mathbb{K}(L^2(M)\otimes\mathbb{B}(L^2(\mathbb{R}))$ for any $a,b\in A\rtimes_{\rm r}\mathbb{R}$.

In $\cite{Is12_2}$, we proved that von Neumann algebras of free quantum groups with Haar states satisfy condition $\rm (AOC)^+$ and then deduced that they do not have modular action (of Haar states) invariant Cartan subalgebras if they have the $\rm W^*$CBAP (and they are non-amenable). 
This is a partial answer for the absence of Cartan subalgebras in two senses: the $\rm W^*$CBAP of free quantum groups were not known, and we could prove only the absence of special Cartan subalgebras under the $\rm W^*$CBAP.

Very recently De Commer, Freslon and Yamashita solved the first problem. In fact, they proved that free quantum groups and quantum automorphism groups are weakly amenable and hence von Neumann algebras of them have the $\rm W^*$CBAP $\cite{DFY13}$. 
Thus only the second problem remains to be proved.

In this paper, we solve the second problem. In fact, the following theorem is an analogue of Theorem \ref{A} on continuous cores and it gives a complete answer for our Cartan problem (except for amenability). 
In the theorem below, $C_h(L^\infty(\mathbb{G}))$ means the continuous core of $L^\infty(\mathbb{G})$ with respect to the Haar state $h$.
\begin{ThmA}\label{B}
Let $\mathbb{G}$ be a compact quantum group, $h$ a Haar state of $\mathbb{G}$ and $(B,\tau_B)$ a tracial von Neumann algebra. Denote $M:= C_h(L^\infty(\mathbb{G}))\otimes B$ and $\mathrm{Tr}_M:=\mathrm{Tr} \otimes\tau_B$, where $\mathrm{Tr}$ is the canonical trace on $C_h(L^\infty(\mathbb{G}))$.
Let $q$ be a $\mathrm{Tr}_M$-finite projection in $M$ and $A\subset qMq$ an amenable von Neumann subalgebra. 
Assume that $L^\infty(\mathbb{G})$ has the $ W^*$CBAP and $(L^\infty(\mathbb{G}),h)$  satisfies condition $\rm (AOC)^+$ with the dense $C^*$-algebra $C_{\rm red}(\mathbb{G})$. Then we have either one of the following statements:
\begin{itemize}
	\item[$\rm (i)$] We have $A\preceq_{M} L\mathbb{R}\otimes B$.
	\item[$\rm (ii)$] The algebra $L^2(qM)$ is left ${\cal N}_{qMq}(A)''$-amenable as a $qMq$-$B$-bimodule.
\end{itemize}
\end{ThmA}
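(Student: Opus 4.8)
The plan is to run the deformation/rigidity dichotomy of Popa and Vaes in the form adapted to continuous cores, with condition $\rm (AOC)^+$ playing, relative to $L\mathbb{R}$, the role that bi-exactness plays in the proof of Theorem \ref{A}. Write $N:=L\mathbb{R}\otimes B\subset M$, let $E_N\colon M\to N$ be the $\mathrm{Tr}_M$-preserving conditional expectation and $e_N$ the associated Jones projection. Two structural inputs must be assembled. First, since $\mathbb{R}$ is amenable the $\rm W^*$CBAP passes from $L^\infty(\mathbb{G})$ to the core $C_h(L^\infty(\mathbb{G}))$, and tensoring with $\mathrm{id}_B$ produces completely bounded $N$-bimodular maps $\Psi_n$ on $M$, of finite rank over $N$, with $\sup_n\|\Psi_n\|_{\mathrm{cb}}<\infty$, converging to $\mathrm{id}_M$ pointwise in $\|\cdot\|_{2,\mathrm{Tr}_M}$ on every $\mathrm{Tr}_M$-finite corner; this replaces the weak amenability hypothesis of Theorem \ref{A}. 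Secondly, condition $\rm (AOC)^+$ for $(L^\infty(\mathbb{G}),h)$ with the dense $C^*$-algebra $C_{\rm red}(\mathbb{G})$ is the continuous-core analogue of condition $\rm (AO)^+$; unpacking it — after tensoring the u.c.p.\ map $\theta$ with $\mathrm{id}_B$ — in the same way one unpacks $\rm (AO)^+$ shows that $M=C_h(L^\infty(\mathbb{G}))\otimes B$ is bi-exact relative to $N$: for a suitable weakly dense $C^*$-subalgebra $A_0\subset M$, the $\ast$-representation of $A_0\otimes_{\mathrm{alg}}A_0^{\rm op}$ on $L^2(M)$ descends, modulo the ideal $\overline{M e_N M}^{\|\cdot\|}$ of $N$-compact operators, to a $\ast$-homomorphism continuous for the minimal $C^*$-tensor norm — the compactness input that drives the dichotomy.

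With both inputs in hand, suppose alternative (i) fails, i.e.\ $A\not\preceq_M N$. By Popa's intertwining criterion there is a sequence of unitaries $u_k\in{\cal U}(A)$ with $\|E_N(x u_k y)\|_{2,\mathrm{Tr}_M}\to 0$ for all $x,y\in M$; that is, $u_k\to 0$ in the $N$-relative weak topology. Since $A$ is amenable, there is a state $\Omega$ on $q\langle M,e_N\rangle q$ which is $A$-central and normal on $qMq$ (a weak-$\ast$ limit of vector states from a net of almost-$A$-central unit vectors for the coarse $A$-bimodule) — a candidate witness for the $N$-relative amenability of the normalizer. Playing the $\Psi_n$ against the $u_k$ inside $\Omega$, the finite rank over $N$ of $\Psi_n$ together with $u_k\to 0$ in the $N$-relative weak topology force, via the second input, the error introduced by inserting the $\Psi_n$ to lie in $\overline{M e_N M}^{\|\cdot\|}$, on which $\Omega$ is normal; hence $\Omega$ does not degenerate in the limit. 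Averaging it over ${\cal N}_{qMq}(A)$ — which normalizes $A$, hence preserves both $A$-centrality in the limit and the $N$-relative weak topology — yields an ${\cal N}_{qMq}(A)''$-central, $qMq$-normal state on $q\langle M,e_N\rangle q$, i.e.\ ${\cal N}_{qMq}(A)''$ is amenable relative to $N$ in $M$. Finally, since $L\mathbb{R}$ is amenable, $N=L\mathbb{R}\otimes B$ is amenable relative to $B$, so by transitivity of relative amenability ${\cal N}_{qMq}(A)''$ is amenable relative to $B$ in $M$; equivalently $L^2(qM)$ is left ${\cal N}_{qMq}(A)''$-amenable as a $qMq$-$B$-bimodule, which is alternative (ii).

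The step I expect to be the main obstacle is, exactly as in \cite{Is12_2}, the extra real line carried by the core. In the Kac/group situation of Theorem \ref{A} the compactness coming from bi-exactness is compactness of honest operators on $L^2$, whereas $\rm (AOC)^+$ only asserts that $\theta(a\otimes b^{\rm op})-ab^{\rm op}\in\mathbb{K}(L^2(L^\infty(\mathbb{G})))\otimes\mathbb{B}(L^2(\mathbb{R}))$ — compact in the $L^\infty(\mathbb{G})$-variable but entirely uncontrolled in the $\mathbb{R}$-variable. One has to verify that, once everything is transported to the continuous core and tensored with $B$, this weaker property really does produce the ideal $\overline{M e_N M}^{\|\cdot\|}$ used above and is absorbed by relative amenability over $N=L\mathbb{R}\otimes B$: the $L\mathbb{R}$ summand is precisely what must swallow the unbounded $\mathbb{B}(L^2(\mathbb{R}))$ direction, and this is the structural reason alternative (i) intertwines into $L\mathbb{R}\otimes B$ rather than merely into $B$. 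A second, more clerical, difficulty is that $\mathrm{Tr}_M$ is only semifinite, so $q$ is merely $\mathrm{Tr}_M$-finite; the Hilbert-module and $L^2$-estimates above must be carried out after cutting by $\mathrm{Tr}_M$-finite projections, with the relevant nets controlled uniformly, in the style of the continuous-core arguments of \cite{Is12_2}. Granting these points, the dichotomy follows, and a general $\mathrm{Tr}_M$-finite projection $q$ reduces to the case $q=1$ by the usual amplification.
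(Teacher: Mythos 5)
Your skeleton matches the paper's in outline: the $W^*$CBAP of $L^\infty(\mathbb{G})$ passes to the core and, via maps of finite rank relative to $B$, produces (following $\cite[\textrm{Theorem 5.1}]{PV12}$) a net of normal states $\omega_i$, realized by vectors $\xi_i$ in a positive cone that are almost $A$-central, almost normalizer-invariant and normal on $qMq$; condition $\rm (AOC)^+$ then transfers the limit state through a minimal-tensor-norm inequality $|\Omega_1(\Theta(X))|\leq\|\Psi(X)\|$ to an ${\cal N}_{qMq}(A)''$-central state on $q(B\otimes \mathbb{B}(L^2(C_h)))q$, which is alternative (ii) directly (the paper does not need your detour through relative amenability over $N=L\mathbb{R}\otimes B$ followed by transitivity, though that detour would also land in the right place). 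The problem is that the step you yourself flag as ``the main obstacle'' is exactly the one your proposal does not supply, and the mechanism you sketch in its place does not work. The $\rm (AOC)^+$ error terms $\theta(a\otimes b^{\rm op})-ab^{\rm op}$ lie in $\mathbb{K}(L^2(\mathbb{G}))\otimes\mathbb{B}(L^2(\mathbb{R}))$; such operators do not commute with the right action of $L\mathbb{R}$, so they are not elements of $q\langle M,e_N\rangle q$ at all, let alone of the ideal $\overline{Me_NM}^{\|\cdot\|}$ of $N$-compact operators. Hence there is no ``bi-exactness of $M$ relative to $N$'' in your sense to be extracted from $\rm (AOC)^+$, and one cannot absorb the errors by normality of an $A$-central state on the $N$-compacts.

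What the paper does instead is Lemma \ref{case2}: assuming $A\not\preceq_M B\otimes L\mathbb{R}$, it proves directly that the vectors $\xi_i$ satisfy $\limsup_i\|(1\otimes x\otimes 1)\xi_i\|=0$ for every $x\in\mathbb{K}(L^2(\mathbb{G}))$, so that compactness in the $L^2(\mathbb{G})$ leg alone suffices — the uncontrolled $\mathbb{B}(L^2(\mathbb{R}))$ leg is irrelevant because the states annihilate the whole ideal $\mathbb{K}(L^2(\mathbb{G}))\otimes\mathbb{B}(L^2(\mathbb{R}))$ in the limit. The proof of that lemma is the genuinely new content of Theorem \ref{B}: one localizes to a $\mathrm{Tr}$-finite projection $r\in L\mathbb{R}$ (claim 1), chooses matrix coefficients that are eigenvectors of the modular operator, builds compressions $\Phi_u$ that are normal conditional expectations onto $B\otimes L\mathbb{R}$ (claim 2), estimates $\|\pi(x)(1\otimes P_u\otimes r)\xi_i\|$ against $\|\cdot\|_{2,\mathrm{Tr}_M\circ\Phi_u}$ (claim 3), uses the intertwining criterion together with finiteness of the fusion rules to find unitaries of $A$ supported away from any prescribed finite set of irreducibles (claim 4), and then runs a norm-doubling bootstrap $\delta\mapsto 2^{1/2}\delta$ to a contradiction. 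Nothing in your proposal replaces this argument, so as written the proof has a genuine gap precisely at the point where the semifinite, non-Kac core differs from the group/Kac situation of Theorem \ref{A}.
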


\begin{CorA}
Let $\mathbb{G}$ be a compact quantum group. 
Assume that $L^\infty(\mathbb{G})$ has the $ W^*$CBAP and $(L^\infty(\mathbb{G}),h)$ satisfies condition $\rm (AOC)^+$ with the dense $C^*$-subalgebra $C_{\rm red}(\mathbb{G})$.

\begin{itemize}
	\item[$\rm (1)$] 
Any non amenable von Neumann subalgebra of $L^\infty(\mathbb{G})$, which is an image of a faithful normal conditional expectation, has no Cartan subalgebras.
	\item[$\rm (2)$] If $L^\infty(\mathbb{G})$ is non amenable, then $L^\infty(\mathbb{G})\otimes B$ has no Cartan subalgebras for any finite von Neumann algebra $B$.
\end{itemize}
\end{CorA}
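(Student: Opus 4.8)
The plan is to deduce this from Theorem \ref{B} by the usual passage to continuous cores. I will use the following standard facts. (a) If $A\subset P$ is a von Neumann subalgebra which is the range of a faithful normal conditional expectation $E_A$, and $\psi:=\psi_0\circ E_A$ for a faithful normal state $\psi_0$ on $A$, then $\sigma^\psi$ fixes $A$ pointwise, so inside $C_\psi(P)$ the core $C_\psi(A)=A\otimes L\mathbb{R}$ contains the canonical copy of $L\mathbb{R}$; moreover $C_\psi(A)$ is maximal abelian in $C_\psi(P)$ whenever $A$ is maximal abelian in $P$, and every $u\in\mathcal{N}_P(A)$ normalizes $C_\psi(A)$ in $C_\psi(P)$ (this is where the choice of $\psi$ matters: $\psi$ and $\psi\circ\mathrm{Ad}(u)$ both factor through $E_A$, so their Connes cocycle lies in $A$). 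Consequently, if $A$ is Cartan in $P$ then $C_\psi(A)$ is Cartan in $C_\psi(P)$, and it is amenable. (b) All continuous cores of a given von Neumann algebra are canonically isomorphic by isomorphisms fixing the algebra and the canonical trace; in particular $C_\psi(L^\infty(\mathbb{G})\otimes B)\cong C_{h\otimes\tau_B}(L^\infty(\mathbb{G})\otimes B)=C_h(L^\infty(\mathbb{G}))\otimes B$, with the copy of $B$ mapped onto $1\otimes B$. (c) If $q$ is a finite projection of a semifinite von Neumann algebra $(M,\mathrm{Tr})$, $D\subset R\subset qMq$ with $D$ regular in $R$ and $R$ the range of a conditional expectation, $Q\subset M$ is the range of a $\mathrm{Tr}$-preserving conditional expectation, and $D\preceq_M Q$, then $R$ is amenable relative to $Q$ inside $M$ (combine the intertwining bimodule with the Ozawa--Popa normalizer argument, as in \cite{PV12}). (d) Relative amenability is transitive, a von Neumann algebra is amenable iff its continuous core is, and $R\otimes B$ is amenable relative to $1\otimes B$ iff $R$ is amenable.

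I treat (1) and (2) together. For (1) put $P:=L^\infty(\mathbb{G})$, let $N\subset P$ be non amenable and the range of a faithful normal conditional expectation $E_0\colon P\to N$, assume toward a contradiction that $A_0\subset N$ is a Cartan subalgebra with conditional expectation $E_{A_0}^N\colon N\to A_0$, set $E_A:=E_{A_0}^N\circ E_0\colon P\to A_0$ and $M:=C_h(L^\infty(\mathbb{G}))$. For (2) assume $L^\infty(\mathbb{G})$ is non amenable, let $B$ be a finite von Neumann algebra, put $P:=L^\infty(\mathbb{G})\otimes B$ and $N:=P$, assume toward a contradiction that $A_0\subset P$ is a Cartan subalgebra with conditional expectation $E_A\colon P\to A_0$, and set $M:=C_h(L^\infty(\mathbb{G}))\otimes B$. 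In both cases fix a faithful normal state $\psi_0$ on $A_0$, put $\psi:=\psi_0\circ E_A$, note that $\sigma^\psi$ globally preserves $A_0$ and $N$, and pass to cores: transporting through the isomorphisms of (b) we obtain $C_\psi(A_0)\subset\widetilde N\subset M$, where $\widetilde N$ is a copy of $C_\psi(N)$ (so $\widetilde N=M$ in case (2)), together with a $\mathrm{Tr}_M$-preserving conditional expectation $M\to\widetilde N$; by (a), $C_\psi(A_0)$ is an amenable Cartan subalgebra of $\widetilde N$. Pick a $\mathrm{Tr}_M$-finite projection $q\in C_\psi(A_0)$, chosen (by a routine exhaustion over the finite projections of the maximal abelian algebra $C_\psi(A_0)$, whose central supports in $\widetilde N$ increase to $1$) large enough that amenability of $q\widetilde Nq$ relative to $B$ implies that of $\widetilde N$ relative to $B$. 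Now apply Theorem \ref{B} to the amenable subalgebra $q\,C_\psi(A_0)\,q\subset qMq$ (with $B=\mathbb{C}$ in case (1)); the hypotheses on $L^\infty(\mathbb{G})$ are exactly those assumed in the Corollary. If conclusion (ii) holds, then $\mathcal{N}_{qMq}(q\,C_\psi(A_0)\,q)''$ is amenable relative to $B$, and since it contains $q\widetilde Nq$ (because $C_\psi(A_0)$ is regular in $\widetilde N$), so is $q\widetilde Nq$. If conclusion (i) holds, then $q\,C_\psi(A_0)\,q\preceq_M L\mathbb{R}\otimes B$, so (c) gives that $q\widetilde Nq$ is amenable relative to $L\mathbb{R}\otimes B$; since $L\mathbb{R}$ is amenable, $L\mathbb{R}\otimes B$ is amenable relative to $1\otimes B$, and by transitivity $q\widetilde Nq$ is again amenable relative to $B$. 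Either way $\widetilde N$ is amenable relative to $B$. In case (1) (where $B=\mathbb{C}$) this says $C_\psi(N)\cong C_h(N)$ is amenable, hence $N$ is amenable by (d); in case (2) it says $C_h(L^\infty(\mathbb{G}))\otimes B$ is amenable relative to $1\otimes B$, hence $C_h(L^\infty(\mathbb{G}))$, hence $L^\infty(\mathbb{G})$, is amenable by (d). Both conclusions contradict our hypotheses, and the Corollary follows.

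The real content is Theorem \ref{B}; the deduction above is formal except for two points, both of which are already settled in the literature. The first is the normalizer statement in fact (a): one must check that, when $\psi$ is chosen to factor through the conditional expectation onto the Cartan subalgebra, every normalizing unitary of $A_0$ still normalizes $C_\psi(A_0)=A_0\otimes L\mathbb{R}$ inside the core; this is the one genuinely non-formal step and rests on the fact that the relevant Connes cocycles lie in $A_0$. The second is fact (c), namely that intertwining of a regular subalgebra of a corner into $Q$ propagates relative amenability to the whole corner; this is standard and is proved exactly as the corresponding statement in \cite{PV12}. The only remaining point that needs a word of care is the choice of the $\mathrm{Tr}_M$-finite projection $q$, handled by the exhaustion indicated above.
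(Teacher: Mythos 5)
Your overall route is the same as the paper's: pass to the continuous core, observe that the core of the Cartan subalgebra is an amenable subalgebra of $qMq$, apply Theorem \ref{B}, and derive a contradiction in each alternative. The problem is your treatment of alternative (i), which rests on your fact (c), and fact (c) is false as stated. Take $M=L\mathbb{F}_2\otimes L^\infty[0,1]$ with its trace, $q=1$, $D=Q=1\otimes L^\infty[0,1]$ and $R=M$. Then $D$ is central, hence regular, in $R$; both $R$ and $Q$ are ranges of trace-preserving conditional expectations; and $D\preceq_M Q$ trivially. Yet $M$ is not amenable relative to $Q$: an $M$-central state on $\langle M,e_Q\rangle=\mathbb{B}(L^2(L\mathbb{F}_2))\otimes L^\infty[0,1]$ restricting to the trace on $M$ would make $L\mathbb{F}_2$ amenable. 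So intertwining of a regular (even Cartan) subalgebra of $R$ into $Q$ does not propagate to relative amenability of $R$ over $Q$, and your case (i) argument collapses in both parts of the corollary.

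What the paper does instead in case (i): it first shows, following \cite[Proposition 2.10]{BHR12}, that $\tilde{A}\preceq_{\tilde{M}}B\otimes L\mathbb{R}$ at the core level is equivalent to $A\preceq_M B$ at the level of $M=B\otimes L^\infty(\mathbb{G})$ itself (a step absent from your proof). In part (1), where $B=\mathbb{C}$, this is then impossible because the Cartan subalgebra is diffuse. In part (2) it is excluded by \cite[Lemma 3.5]{Va08}: $Az\preceq_{Mz}B\otimes\mathbb{C}w$ forces $(B\otimes\mathbb{C}w)'\cap Mz\preceq_{Mz}A'\cap Mz=Az$, i.e.\ $1_B\otimes L^\infty(\mathbb{G})w\preceq_{Mz}Az$, so $L^\infty(\mathbb{G})w$ would have an amenable direct summand, contradicting the choice of the central projection $w$. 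You need one of these two mechanisms; neither appears in your write-up. A secondary point: in alternative (ii) you assert that the semifinite algebra $\widetilde{N}$ is ``amenable relative to $B$'', a notion the paper defines only for tracial algebras; the paper instead upgrades the conclusion of Theorem \ref{B} to an honest conditional expectation from $B\otimes\mathbb{B}(L^2(C_h))$ onto $\tilde{P}$ (boundedness of a multiplication map over an exhausting net of finite projections, then Arveson extension) and contradicts non-amenability of $C_hw$ from there; this part of your argument is repairable but is not the purely formal step you present it as.
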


As further examples of corollaries above, we give the following theorem. Note that this theorem does not say anything about amenability of $\hat{\mathbb{G}}$ and $L^\infty(\mathbb{G})$.

\begin{ThmA}\label{C}
Let $\mathbb{G}$ be one of the following compact quantum groups.
\begin{itemize}
	\item[$\rm (i)$] A co-amenable compact quantum group.
	\item[\rm (ii)] The free unitary quantum group $A_u(F)$ for any $F\in\mathrm{GL}(n,\mathbb{C})$.
	\item[\rm (iii)] The free orthogonal quantum group $A_o(F)$ for any $F\in\mathrm{GL}(n,\mathbb{C})$.
	\item[\rm (iv)] The quantum automorphism group $A_{\rm aut}(B, \phi)$ for any finite dimensional $C^*$-algebra $B$ and any faithful state $\phi$ on $B$.
	\item[\rm (v)] The dual of a bi-exact and weakly amenable discrete group $\Gamma$ with $\Lambda_{\rm cb}(\Gamma)=1$.
	\item[\rm (vi)] The dual of a free product $\hat{\mathbb{G}}_1*\cdots *\hat{\mathbb{G}}_n$, where each $\mathbb{G}_i$ is as in from $\rm (i)$ to $\rm (v)$ above.
\end{itemize}
Then the dual $\hat{\mathbb{G}}$ is weakly amenable and bi-exact.
The associated von Neumann algebra $L^\infty(\mathbb{G})$ has the $W^*$CBAP and satisfies condition $\rm (AOC)^+$ with the Haar state and the dense $C^*$-subalgebra $C_{\rm red}(\mathbb{G})$.
\end{ThmA}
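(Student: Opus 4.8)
The plan is to establish, separately on the basic examples $\rm (i)$--$\rm (v)$, the two conclusions --- that $\hat{\mathbb{G}}$ is weakly amenable with $\Lambda_{\rm cb}=1$ and bi-exact, and that $L^\infty(\mathbb{G})$ has the $\rm W^*$CBAP (equivalently, $\hat{\mathbb{G}}$ is weakly amenable) and satisfies condition $\rm (AOC)^+$ with $C_{\rm red}(\mathbb{G})$ --- and then to transport these through the free product in case $\rm (vi)$ using the permanence results established earlier in the paper.

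For the basic examples: in case $\rm (i)$, co-amenability of $\mathbb{G}$ means $\hat{\mathbb{G}}$ is amenable, hence $\Lambda_{\rm cb}(\hat{\mathbb{G}})=1$ and $L^\infty(\mathbb{G})$ is injective; amenability of $\hat{\mathbb{G}}$ gives bi-exactness (one may take $\ell^\infty(\hat{\mathbb{G}})$ itself in the boundary characterization); and condition $\rm (AOC)^+$ with $C_{\rm red}(\mathbb{G})$ holds because $C_{\rm red}(\mathbb{G})$ is nuclear, so its reduced crossed product by the (point-norm continuous) modular action of $h$ is nuclear, hence locally reflexive, and the multiplication map $(C_{\rm red}(\mathbb{G})\rtimes_{\rm r}\mathbb{R})\otimes(C_{\rm red}(\mathbb{G})\rtimes_{\rm r}\mathbb{R})^{\rm op}\to\mathbb{B}(L^2(M,h)\otimes L^2(\mathbb{R}))$ already provides the required u.c.p.\ map, with zero error term. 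In cases $\rm (ii)$--$\rm (iv)$, weak amenability with $\Lambda_{\rm cb}(\hat{\mathbb{G}})=1$, and hence the $\rm W^*$CBAP of $L^\infty(\mathbb{G})$, is the theorem of De Commer--Freslon--Yamashita $\cite{DFY13}$; bi-exactness of $\hat{\mathbb{G}}$ follows from the boundary characterization, using exactness of $C_{\rm red}(\mathbb{G})$ together with the amenability of the left translation action and triviality of the right translation action on the Vaes--Vergnioux boundary of $\hat{\mathbb{G}}$ (and the corresponding boundary for the quantum automorphism groups); and condition $\rm (AOC)^+$ with $C_{\rm red}(\mathbb{G})$ is $\cite{Is12_2}$ (for $A_{\rm aut}(B,\phi)$ one may additionally invoke monoidal equivalence with a free orthogonal quantum group). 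In case $\rm (v)$, the definitions of weak amenability and bi-exactness for discrete quantum groups reduce, for $\hat{\mathbb{G}}=\Gamma$, to the classical ones, so both hold by hypothesis; since $\mathbb{G}$ is then of Kac type, $L^\infty(\mathbb{G})=L\Gamma$ carries a trace, its continuous core is $L\Gamma\,\bar\otimes\,L\mathbb{R}$ with trivial modular action, and condition $\rm (AOC)^+$ with $C^*_\lambda(\Gamma)$ reduces to condition $\rm (AO)^+$ of $L\Gamma$ --- a consequence of bi-exactness of $\Gamma$ --- by tensoring the relevant u.c.p.\ map with the $L^2(\mathbb{R})$-direction and using exactness of $C^*_\lambda(\Gamma)$ to preserve local reflexivity.

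For case $\rm (vi)$: $\Lambda_{\rm cb}$ equals $1$ for a free product of discrete quantum groups each with $\Lambda_{\rm cb}=1$, so $\hat{\mathbb{G}}=\hat{\mathbb{G}}_1*\cdots*\hat{\mathbb{G}}_n$ is weakly amenable with $\Lambda_{\rm cb}=1$ and $L^\infty(\mathbb{G})$ has the $\rm W^*$CBAP; bi-exactness of $\hat{\mathbb{G}}$ follows from the free-product permanence of bi-exactness proved earlier, applied to the factors $\hat{\mathbb{G}}_i$, which by the cases above are exact and bi-exact; and since $C_{\rm red}(\mathbb{G})=C_{\rm red}(\mathbb{G}_1)*\cdots*C_{\rm red}(\mathbb{G}_n)$ as a reduced free product, condition $\rm (AOC)^+$ with $C_{\rm red}(\mathbb{G})$ follows from the permanence of $\rm (AOC)^+$ under reduced free products. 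This settles all assertions.

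The main obstacle is the free-product permanence of bi-exactness and of condition $\rm (AOC)^+$. For bi-exactness this is an Ozawa-type argument adapted to the quantum setting: one must assemble, inside $\ell^\infty(\hat{\mathbb{G}}_1*\cdots*\hat{\mathbb{G}}_n)$, a unital $C^*$-subalgebra containing $c_0$ that encodes the Bass--Serre-tree structure of the free product together with the boundary algebras of the factors, and then verify that the left translation action on the quotient by $c_0$ is amenable --- combining the amenability of the action on the tree boundary with those of the factors --- while the right translation action is trivial. The delicate point for $\rm (AOC)^+$ is controlling the interaction of the (possibly non-Kac) modular data of the factors with the free-product decomposition when passing to the continuous core, which is where the estimates of $\cite{Is12_2}$ enter. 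The classical inputs in cases $\rm (ii)$--$\rm (iv)$ --- exactness of $C_{\rm red}(\mathbb{G})$ and amenability of the boundary actions --- are themselves substantial but already available in the literature.
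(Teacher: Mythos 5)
Your overall architecture (basic examples plus free--product permanence) matches the paper's, but there are two genuine gaps. First, the permanence statement you invoke is not the one the paper proves, and the hypothesis mismatch matters. Propositions \ref{free prod bi-exact} and \ref{free prod AOC^+} do \emph{not} take bi-exactness of the factors as input: they require each $\hat{\mathbb{G}}_i$ to satisfy the intermediate condition of Lemma \ref{intermediate} (a nuclear $C^*$-algebra ${\cal C}_l^i\supset C_{\rm red}(\mathbb{G}_i)+\mathbb{K}(L^2(\mathbb{G}_i))$ whose commutators with $U{\cal C}_l^iU$ are compact), which sits strictly between strong bi-exactness and bi-exactness. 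Bi-exactness of the factors alone is not known to pass to free products, so your sentence ``applied to the factors $\hat{\mathbb{G}}_i$, which by the cases above are exact and bi-exact'' does not feed the permanence result; you must track the \emph{strong} bi-exactness of the elementary pieces (which is what $\cite{VV05}$, $\cite{VV08}$ actually give, recorded as Theorem \ref{example st bi-exact}) and the explicit ${\cal C}_l$'s of Remarks \ref{amenable} and \ref{group}. Relatedly, your sketch of the permanence via a Bass--Serre-tree boundary algebra inside $\ell^\infty(\hat{\mathbb{G}}_1*\cdots*\hat{\mathbb{G}}_n)$ aims at strong bi-exactness of the free product, which the paper deliberately avoids claiming; the paper instead takes the reduced free product $({\cal C}_l^1,h_1)*\cdots*({\cal C}_l^n,h_n)$ on $L^2(\mathbb{G}_1)*\cdots*L^2(\mathbb{G}_n)$, gets nuclearity from Ozawa's free-product lemma, and checks compactness of commutators by the explicit formula $[\lambda_i(a),J\lambda_i(b)J]=W_i^*([a,J_ibJ_i]\otimes P_\Omega)W_i$. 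You flag your version as ``the main obstacle'' and do not carry it out, whereas the paper's route makes it a short computation.

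Second, you have not handled the degenerate parameters in cases $\rm (ii)$--$\rm (iv)$, and the boundary results you cite do not apply to them directly. The paper first reduces: a general $\hat{A}_o(F)$ is a free product of $\hat{A}_o(F_1)$'s and $\hat{A}_u(F_1)$'s with $F_1\bar F_1\in\mathbb{R}\cdot\mathrm{id}$; when the quantum dimension is $2$ one lands in $\mathrm{SU}_{\pm1}(2)$, which is co-amenable; $\hat{A}_u(1_2)$ is only a \emph{quantum subgroup} of $\mathbb{Z}*\hat{A}_o(1_2)$, not itself a free product; and $A_{\rm aut}(B,\phi)$ decomposes into co-amenable pieces and pieces monoidally equivalent to $\mathrm{SO}_q(3)$ (not to a free orthogonal group, as you write). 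Because of the $A_u(1_2)$ case one genuinely needs a hereditarity lemma --- that bi-exactness and condition $\rm (AOC)^+$ pass to discrete quantum subgroups, proved by compressing the u.c.p.\ map with the Jones projection $e_{\mathbb{H}}$ of the Haar-state-preserving expectation --- which is absent from your proposal. A small further slip: in case $\rm (i)$ you cannot take ${\cal B}=\ell^\infty(\hat{\mathbb{G}})$ (its quotient by $c_0$ is neither nuclear in general nor does the right translation act trivially on it); the correct choice is ${\cal B}=c_0(\hat{\mathbb{G}})+\mathbb{C}1$, as in Remark \ref{amenable}. Your treatment of case $\rm (v)$ agrees with Remark \ref{group}.
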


\bigskip

\noindent
Throughout the paper, we always assume that discrete groups are countable, quantum group $C^*$-algebras are separable, von Neumann algebras have separable predual, and Hilbert spaces are separable.

\bigskip

\noindent 
{\bf Acknowledgement.} The author would like to thank Yuki Arano, Cyril Houdayer, Yasuyuki Kawahigashi, Narutaka Ozawa, Sven Raum and Stefaan Vaes for fruitful conversations. 
He was supported by JSPS, Research Fellow of the Japan Society for the Promotion of Science and FMSP, Frontiers of Mathematical Sciences and Physics. 
This research was carried out while he was visiting the Institut de Math$\rm \acute{e}$matiques de Jussieu. He gratefully acknowledges the kind hospitality of them.

\section{\bf Preliminaries}

\subsection{\bf Compact (and discrete) quantum groups}\label{CQG}

Let $\mathbb{G}$ be a compact quantum group. In the paper, we basically use the following notation.
We denote the comultiplication by $\Phi$, Haar state by $h$, the set of all equivalence classes of all unitary corepresentations by $\mathrm{Irred}(\mathbb{G})$, and right and left regular representations by $\rho$ and $\lambda$ respectively. 
We regard $C_{\rm red}(\mathbb{G}):=\rho(C(\mathbb{G}))$ as our main object and we frequently omit $\rho$ when we see the dense Hopf $*$-algebra. 
The canonical unitary $\mathbb{V}$ is defined as $\mathbb{V}=\bigoplus_{x\in\mathrm{Irred}\mathbb{G}} U^x$.
The GNS representation of $h$ is written as $L^2(\mathbb{G})$ and it has a decomposition $L^2(\mathbb{G})=\sum_{x\in\mathrm{Irred}(\mathbb{G})}\oplus (H_x\otimes H_{\bar{x}})$.
Along the decomposition, the modular operator of $h$ is of the form $\Delta^{it}=\sum_{x\in\mathrm{Irred}(\mathbb{G})}\oplus (F_x^{it}\otimes F_{\bar{x}}^{-it})$. 
The canonical conjugation is denoted by $J$. 
All dual objects are written with hat (e.g.\ $\hat{\mathbb{G}}, \hat{\Phi},\ldots$).
We frequently use the unitary element $U:=J\hat{J}$ which satisfies  $U\rho(C(\mathbb{G}))U=\lambda(C(\mathbb{G}))$ and $U\hat{\rho}(c_0(\hat{\mathbb{G}}))U=\hat{\lambda}(c_0(\hat{\mathbb{G}}))$.

\bigskip

{\bf $\bullet$ Crossed products}

\bigskip

For crossed products of quantum groups, we refer the reader to $\cite{BS93}$.

Let $\hat{\mathbb{G}}$ be a discrete quantum group and $A$ a $C^*$-algebra. Recall following notions:
\begin{itemize}
	\item A (\textit{left}) \textit{action} of $\hat{\mathbb{G}}$ on $A$ is a non-degenerate $*$-homomorphism $\alpha\colon A \rightarrow M(c_0(\hat{\mathbb{G}})\otimes A)$ such that  $(\iota\otimes \alpha)\alpha=(\hat{\Phi}\otimes\iota)\alpha$ and $(\hat{\epsilon}\otimes\iota)\alpha(a)=a$ for any $a\in A$, where $\hat{\epsilon}$ is the counit.
	\item A \textit{covariant representation} of an action $\alpha$ into a $C^*$-algebra $B$ is a pair $(\theta,X)$ such that 
	\begin{itemize}
 		\item $\theta$ is a non-degenerate $*$-homomorphism from $A$ into $M(B)$;
		\item $X \in M(c_0(\hat{\mathbb{G}})\otimes B)$ is a unitary representation of $\hat{\mathbb{G}}$;
		\item they satisfy the covariant relation $(\iota\otimes \theta)\alpha(a)=X^*(1\otimes \theta(a))X$ ($a\in A$).
	\end{itemize}
\end{itemize}

Let $\alpha$ be an action of $\hat{\mathbb{G}}$ on $A$. Then for a covariant representation $(\theta,X)$ into $B$, the closed linear span
\begin{equation*}
\overline{\mathrm{span}}\{\theta(a)(\omega\otimes\iota)(X) \mid a\in A, \omega\in \ell^\infty(\hat{\mathbb{G}})_* \} \subset M(B)
\end{equation*}
becomes a $C^*$-algebra. 
The \textit{reduced crossed product $C^*$-algebra} is the $C^*$-algebra associated with the covariant representation $((\hat{\lambda}\otimes\iota)\alpha, \hat{\mathscr{W}}\otimes 1)$ into ${\cal L}(L^2(\mathbb{G})\otimes A)$, where $\hat{\lambda}$ is the left regular representation, $\hat{\mathscr{W}}:=(\iota\otimes\rho)(\mathbb{V})$, and ${\cal L}(L^2(\mathbb{G})\otimes A)$ is the $C^*$-algebra of all right $A$-module maps on the Hilbert module $L^2(\mathbb{G})\otimes A$. 
The \textit{full crossed product $C^*$-algebra} is defined as a universal object for all covariant representations. 
When $A\subset \mathbb{B}(H)$ is a von Neumann algebra, the \textit{crossed product von Neumann algebra} is defined as the $\sigma$-weak closure of the reduced crossed product in $\mathbb{B}(L^2(\mathbb{G})\otimes H)$. 
We denote them by $\hat{\mathbb{G}}\ltimes_{\rm r}A$, $\hat{\mathbb{G}}\ltimes_{\rm f}A$ and $\hat{\mathbb{G}}\ltimes A$ respectively. 

The crossed product von Neumann algebra has a canonical conditional expectation $E_A$ onto $A$, given by 
\begin{equation*}
\hat{\mathbb{G}}\ltimes A \ni \theta(a)(\omega\otimes\iota\otimes\iota)(\hat{\mathscr{W}}\otimes 1) \mapsto a (\omega\otimes h \otimes\iota)(\hat{\mathscr{W}}\otimes 1)\in A,
\end{equation*}
where $h$ is the Haar state (more explicitly it sends $\theta(a)\rho(u_{i,j}^z)$ to $a h(u_{i,j}^z)$).
For a faithful normal state $\phi$ on $A$, we can define a canonical faithful normal state on $\hat{\mathbb{G}}\ltimes A$ by $\tilde{\phi}:=\phi\circ E_A$. When $\phi$ is a trace and the given action $\alpha$ is $\phi$-preserving (i.e.\ $(\iota\otimes\phi)\alpha(a)=\phi(a)$ for $a\in A$), $\tilde{\phi}$ also becomes a trace.

\bigskip

{\bf $\bullet$ Free products}

\bigskip

For free products of discrete quantum groups, we refer the reader to $\cite{Wa95}$.

We first recall fundamental facts of free products of $C^*$-algebras. Let $A_i$ $(i=1,\ldots,n)$ be unital $C^{\ast}$-algebras and $\phi_i$ non-degenerate states on $A_i$ (i.e.\ GNS representations of $\phi_i$ are faithful).
Denote GNS-representations of $\phi_i$ by $(\pi_{i},H_{i},\xi_{i})$ and decompose $H_{i}=\mathbb{C}\xi_{i}\oplus H_{i}^{0}$ as Hilbert spaces, where $H_{i}^{0}:=(\mathbb{C}\xi_{i})^{\perp}$. Define two Hilbert spaces by

\begin{eqnarray*}
H_1*\cdots*H_n:=\mathbb{C}\Omega \oplus \bigoplus_{k\geq1}\hspace{0.3em}&\bigoplus_{(i_1,\dots,i_k)\in N_k}& H_{i_{1}}^{0}\otimes H_{i_{2}}^{0}\cdots \otimes H_{i_{k}}^{0},\\
H(i):=\mathbb{C}\Omega \oplus \bigoplus_{k\geq1}\hspace{0.3em}&\bigoplus_{(i_1,\dots,i_k)\in N_k, i\neq i_{1}}& H_{i_{1}}^{0}\otimes H_{i_{2}}^{0}\cdots \otimes H_{i_{k}}^{0},
\end{eqnarray*}
where $\Omega$ is a fixed norm one vector and $N_k:=\{(i_1,\dots,i_k)\in \{1,\ldots,n\}^k \mid i_l\neq i_{l+1} \textrm{ for all }l=1,\dots,k-1\}$. Write $H:=H_1*\cdots*H_n$. Let $W_i$ be unitary operators given by
\begin{eqnarray*}
W_{i}\colon H
&=& H(i)\oplus(H_{i}^{0}\otimes H(i))\\
&\simeq& (\mathbb{C}\xi_{i}\otimes H(i))\oplus(H_{i}^{0}\otimes H(i))\\
&\simeq&(\mathbb{C}\xi_{i} \oplus H_{i}^{0})\otimes H(i)\\
&=&H_{i}\otimes H(i)
\end{eqnarray*}
Then a canonical $A_i$-action (more generally $\mathbb{B}(H_i)$-action) on $H$ is given by $\lambda_i(a):=W_i^* (a\otimes 1)W_i$ $(a\in A_i)$. The \textit{free product $C^*$-algebra} $(A_1,\phi_1)*\cdots *(A_n,\phi_n)$ is the $C^*$-algebra generated by $\lambda_i(A_i)$  $(i=1,\ldots, n)$. 
The vector state of the canonical cyclic vector $\Omega$ is called the free product state and denoted by $\phi_1*\cdots *\phi_n$. 
 When each $A_i$ is a von Neumann algebra and $\phi_i$ is normal, the \textit{free product von Neumann algebra} is defined as the $\sigma$-weak closure of the free product $C^*$-algebra in $\mathbb{B}(H)$. We denote it by $(A_1,\phi_1)\bar{*}\cdots \bar{*}(A_n,\phi_n)$.

Let $M_i$ $(i=1,\ldots,n)$ be von Neumann algebras and $\phi_i$ be faithful normal states on $M_i$. Denote modular objects of them by $\Delta_i$ and $J_i$. Then modular objects on the free product von Neumann algebra are given by
\begin{alignat*}{3}
\Delta^{it} &\colon H_{i_{1}}^{0}\otimes \cdots \otimes H_{i_{k}}^{0} \ni \xi_{i_{1}}\otimes \cdots \otimes \xi_{i_{k}} \longmapsto \Delta^{it}_{i_1}\xi_{i_{1}}\otimes \cdots \otimes \Delta^{it}_{i_k}\xi_{i_{k}}  &\in& H_{i_{1}}^{0}\otimes \cdots \otimes H_{i_{k}}^{0},\\
J \hspace{0.7em} &\colon H_{i_{1}}^{0}\otimes \cdots \otimes H_{i_{k}}^{0} \ni \xi_{i_{1}}\otimes \cdots \otimes \xi_{i_{k}} \longmapsto J_{i_k}\xi_{i_{k}}\otimes \cdots \otimes J_{i_1}\xi_{i_{1}}  &\in& H_{i_{k}}^{0}\otimes \cdots \otimes H_{i_{1}}^{0}.
\end{alignat*}
The unitary elements $V_i:=\Sigma(J_i\otimes J|_{JH(i)})W_iJ$, where $\Sigma$ is a flip from $H_i\otimes H(i)$ to $H(i)\otimes H_i$, gives a right $M_i$-action (more generally $\mathbb{B}(H_i)$-action) by $\rho_i(a):=J\lambda_i(a)J=V_i^*(1\otimes J_iaJ_i)V_i$.

Let $\mathbb{G}_i$ $(i=1,\ldots,n)$ be compact quantum groups and $h_i$ Haar states of them. Then the reduced free product $(C_{\rm red}(\mathbb{G}_1), h_1)*\cdots *(C_{\rm red}(\mathbb{G}_n), h_n)$ has a structure of compact quantum group with the Haar state $h_1*\cdots *h_n$. 
We write its dual as $\hat{\mathbb{G}}_1*\cdots *\hat{\mathbb{G}}_n$ and call it the \textit{free product} of $\hat{\mathbb{G}}_i$ $(i=1,\ldots,n)$. 
Modular objects $\hat{J}$ and $U:=J\hat{J}$ are given by
\begin{alignat*}{3}
\hat{J} \hspace{0.5em} &\colon H_{i_{1}}^{0}\otimes \cdots \otimes H_{i_{n}}^{0} \ni \xi_{i_{1}}\otimes \cdots \otimes \xi_{i_{n}} \longmapsto \hat{J}_{i_1}\xi_{i_1}\otimes \cdots \otimes \hat{J}_{i_n}\xi_{i_{n}}  &\in& H_{i_{1}}^{0}\otimes \cdots \otimes H_{i_{n}}^{0},\\
U \hspace{0.5em} &\colon H_{i_{1}}^{0}\otimes \cdots \otimes H_{i_{n}}^{0} \ni \xi_{i_{1}}\otimes \cdots \otimes \xi_{i_{n}} \longmapsto U_{i_n}\xi_{i_{n}}\otimes \cdots \otimes U_{i_1}\xi_{i_{1}}  &\in& H_{i_{n}}^{0}\otimes \cdots \otimes H_{i_{1}}^{0}.
\end{alignat*}
We have a formula $W_i\hat{J}=(\hat{J}_i\otimes \hat{J}|_{H(i)}) W_i$.

By the natural inclusion $(C_{\rm red}(\mathbb{G}_i), h_i)\subset (C_{\rm red}(\mathbb{G}_1), h_1)*\cdots *(C_{\rm red}(\mathbb{G}_n), h_n)$ $(i=1,\ldots,n)$, we can regard all corepresentations of $\mathbb{G}_i$ as those of the dual of  $\hat{\mathbb{G}}_1*\cdots *\hat{\mathbb{G}}_n$. 
A representative of all irreducible representations of the dual of  $\hat{\mathbb{G}}_1*\cdots *\hat{\mathbb{G}}_n$ is given by the trivial corepresentation and all corepresentations of the form $w_1\boxtimes w_2\boxtimes \cdots \boxtimes w_n$, where each $w_l$ is in $\mathrm{Irred}(\mathbb{G}_i)$ for some $i$ (and not trivial corepresentation) and no two adjacent factors are taken from corepresentations of the same quantum group.

\bigskip

{\bf $\bullet$ Amenability of actions}

\bigskip

Let $\alpha$ be an action of a discrete quantum $\hat{\mathbb{G}}$ on a unital $C^*$-algebra $A$. Denote by $L^2(\mathbb{G})\otimes A$ the Hilbert module obtained from $L^2(\mathbb{G})\otimes_{\rm alg} A$ with $\langle\xi\otimes a | \eta\otimes b\rangle:=\langle\xi|\eta\rangle b^*a$ for $\xi,\eta\in L^2(\mathbb{G})$ and $a,b\in A$. 
We say $\alpha$ is \textit{amenable} $\cite[\textrm{Definition 4.1}]{VV05}$ if 
there exists a sequence $\xi_n\in L^2(\mathbb{G})\otimes A$ satisfying
\begin{itemize}
	\item[$\rm (i)$] $\langle\xi_n |\xi_n\rangle \rightarrow 1$ in $A$;
	\item[$\rm (ii)$] for all $x\in \mathrm{Irred}(\mathbb{G})$, $\|((\iota\otimes \alpha)(\xi_n)-\hat{\mathscr{V}_{12}}(\xi_n)_{13}) (1\otimes p_x \otimes 1)\|\rightarrow 0$;
	\item[$\rm (iii)$] $((\hat{\rho}\times\hat{\lambda})\circ\hat{\Phi}\otimes \iota)\alpha(a)\xi_n =\xi_n a$ for any $n\in\mathbb{N}$ and $a\in A$,
\end{itemize}
where $\hat{\mathscr V}:=(\lambda\otimes \iota)(\mathbb{V}_{21})$ and
$\hat{\rho}\times\hat{\lambda}$ is the multiplication map from $\ell^\infty(\hat{\mathbb{G}})\otimes\ell^\infty(\hat{\mathbb{G}})$ into $\mathbb{B}(L^2(\mathbb{G}))$, which is bounded since $\ell^\infty(\hat{\mathbb{G}})$ is amenable. 
It is clear that $\hat{\mathbb{G}}$ is amenable if and only if any trivial action of $\hat{\mathbb{G}}$ is amenable.

\bigskip

{\bf $\bullet$ Free quantum groups and quantum automorphism groups}

\bigskip

Let $F$ be a matrix in $\mathrm{GL}(n,\mathbb{C})$. The \textit{free unitary quantum group} (resp.\ \textit{free orthogonal quantum group}) of $F$ $\cite{VW96}\cite{Wa95}$ is the $C^*$-algebra $C(A_u(F))$ (resp.\ $C(A_o(F))$) is defined as the universal unital $C^*$-algebra generated by all the entries of a unitary $n$ by $n$ matrix $u=(u_{i,j})_{i,j}$ satisfying that $F(u_{i,j}^*)_{i,j}F^{-1}$ is unitary (resp.\ $F(u_{i,j}^*)_{i,j}F^{-1}=u$).

Recall that a \textit{coaction} of a compact quantum group $\mathbb{G}$ on a unital $C^*$-algebra $B$ is a unital $*$-homomorphism $\beta\colon B\rightarrow B\otimes C(\mathbb{G})$ satisfying $(\beta\otimes \iota)\circ \beta=(\iota\otimes \Phi)\circ \beta$ and $\overline{{\rm span}}\{\beta(B)(1\otimes A)=B\otimes C(\mathbb{G})$.
Let $(B,\phi)$ be a pair of a finite dimensional $C^*$-algebra and a faithful state on $B$. Then the \textit{quantum automorphism group} of $(B,\phi)$ $\cite{Wa98_1}\cite{Ba01}$ is the universal compact quantum group $A_{\rm aut}(B,\phi)$ which can be endowed with a coaction $\beta\colon B\rightarrow B\otimes C(A_{\rm aut}(B,\phi))$ satisfying $(\phi\otimes \iota)\circ \beta(x)=\phi(x)1$ for $x\in B$.

\subsection{\bf Weak amenability and the $\rm \bf W^*$CBAP}

Let $\hat{\mathbb{G}}$ be a discrete quantum group. Denote the dense Hopf $*$-algebra by ${\mathscr C}(\hat{\mathbb{G}})$. 
For any element $a\in\ell^\infty(\hat{\mathbb{G}})$, we can associate a linear map $m_a$ from ${\mathscr C}(\hat{\mathbb{G}})$ to ${\mathscr C}(\hat{\mathbb{G}})$, given by $(m_a\otimes \iota)(u^x)=(1\otimes ap_x)u^x$ for any $x\in \mathrm{Irred}(\mathbb{G})$, where $p_x\in c_0(\hat{\mathbb{G}})$ is the canonical projection onto $x$ component. 
We we say $\hat{\mathbb{G}}$ is \textit{weakly amenable} if there exist a net $(a_i)_i$ of elements of $\ell^\infty(\hat{\mathbb{G}})$ such that  
\begin{itemize}
	\item each $a_i$ has finite support, namely, $a_ip_x=0$ except for finitely many $x\in \mathrm{Irred}(\mathbb{G})$;
	\item $(a_i)_i$ converges to 1 pointwise, namely, $a_ip_x$ converges to $p_x$ in $\mathbb{B}(H_x)$ for any $x\in \mathrm{Irred}(\mathbb{G})$;
	\item $\limsup_i\|m_{a_i}\|_{\rm c.b.}$ is finite.
\end{itemize}

We also recall that a von Neumann algebra $M$ has the $\it weak^*$ \textit{completely approximation property} (or $\ W^*$\textit{CBAP}, in short) 
if there exists a net $(\psi_i)_i$ of completely bounded (say c.b.) maps on $M$ with normal and finite rank such that $\limsup_i\|\psi_i\|_{\rm c.b.}<\infty$ and $\psi_i$ converges to $\mathrm{id}_M$ in the point $\sigma$-weak topology.

Then the \textit{Cowling--Haagerup constant} of $\hat{\mathbb{G}}$ and $M$ are defined as 
\begin{eqnarray*}
&&\Lambda_{\rm c.b.}(\hat{\mathbb{G}}):=\inf\{\ \limsup_i\|m_{a_i}\|_{\rm c.b.}\mid (a_i)_i \textrm{ satisfies the above condition}\},\\
&&\Lambda_{\rm c.b.}(M):=\inf\{\ \limsup_i\|\psi_i\|_{\rm c.b.}\mid (\psi_i) \textrm{ satisfies the above condition}\}.
\end{eqnarray*}
It is known that $\Lambda_{\rm c.b.}(\hat{\mathbb{G}})\geq\Lambda_{\rm c.b.}(L^\infty(\mathbb{G}))$.

De Commer, Freslon, and Yamashita recently proved that $\Lambda_{\rm c.b.}(\hat{\mathbb{G}})=1$, where $\mathbb{G}$ is a free quantum group or a quantum automorphism group $\cite{DFY13}$. 
Note that a special case of this was already solved by Freslon $\cite{Fr12}$.

\subsection{\bf Popa's intertwining techniques and relative amenability}

We first recall Popa's intertwining techniques in both non-finite and semifinite situations. 

\begin{Def}\label{popa embed def}\upshape
Let $M$ be a von Neumann algebra, $p$ and $q$ projections in $M$, $A\subset qMq$ and $B\subset pMp$ von Neumann subalgebras, and let $E_{B}$ be a faithful normal conditional expectation from $pMp$ onto $B$. Assume $A$ and $B$ are finite. We say $A$ \textit{embeds in $B$ inside} $M$ and denote by $A\preceq_M B$ if there exist non-zero projections $e\in A$ and $f\in B$, a unital normal $\ast$-homomorphism $\theta \colon eAe \rightarrow fBf$, and a partial isometry $v\in M$ such that
\begin{itemize}
\item $vv^*\leq e$ and $v^*v\leq f$,
\item $v\theta(x)=xv$ for any $x\in eAe$.
\end{itemize}
\end{Def}

\begin{Thm}[\textit{non-finite version, }{\cite{Po01}\cite{Po03}\cite{Ue12}\cite{HV12}}]\label{popa embed}
Let $M,p,q,A,B$, and $E_{B}$ be as in the definition above, and let $\tau$ be a faithful normal trace on $B$. Then the following conditions are equivalent.
\begin{itemize}
	\item[$\rm (i)$]The algebra $A$ embeds in $B$ inside $M$.
	\item[$\rm (ii)$]There exists no sequence $(w_n)_n$ of unitaries in $A$ such that  $E_{B}(b^*w_n a)\rightarrow 0$ strongly for any $a,b\in qMp$.
\end{itemize}
\end{Thm}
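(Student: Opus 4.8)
The plan is to prove the contrapositive of each implication via the standard averaging argument, adapted from the finite case by localizing to $\mathrm{Tr}$-finite (or $\tau$-finite) corners of $B$. First I would fix notation: write $N := pMp$, let $J = J_\tau$-type considerations be replaced by choosing an increasing sequence of projections $z_k \in \mathcal{Z}(B)$ with $\tau(z_k) < \infty$ and $z_k \uparrow 1$ (possible since $\tau$ is semifinite), so that $\tau$ restricted to $Bz_k$ is a finite trace. For the direction $\neg\rm(ii)\Rightarrow\rm(i)$: suppose there is a sequence of unitaries $(w_n)_n$ in $A$ such that $E_B(b^* w_n a) \not\to 0$ strongly for some $a, b \in qMp$; after replacing $a,b$ by $az_k, bz_k$ for suitable $k$ and renaming, we may assume $a, b$ are supported under a $\tau$-finite projection of $B$ on the right, which brings us into a genuinely finite-trace situation on the relevant corner.

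The core step is the usual Popa averaging. Consider the $A$-central ``candidate for a nonzero element'': on the finite von Neumann algebra $Bz_k$ with its finite trace, form the set $\mathcal{C}$ of elements of the form $\sum_i \lambda_i\, E_B(b^* w_i a)^* \cdot \_ $, or more precisely work in $L^2(\langle N, B\rangle, \widehat{\mathrm{Tr}})$ inside the basic construction, let $e_B$ be the Jones projection, and consider the element $x := a e_B a^* \in \langle N, e_B\rangle_+$ which has finite $\widehat{\mathrm{Tr}}$ (here the $\tau$-finiteness of the support of $a$ on the right is exactly what guarantees $\widehat{\mathrm{Tr}}(a e_B a^*) < \infty$). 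Then I would take the $\|\cdot\|_{2,\widehat{\mathrm{Tr}}}$-closed convex hull $K$ of $\{w x w^* : w \in \mathcal{U}(A)\}$, note it is bounded in operator norm and sits in $L^2$, take the unique element $y_0$ of minimal $\|\cdot\|_2$-norm; $A$-conjugation invariance of $K$ forces $y_0$ to commute with $A$, i.e.\ $y_0 \in A' \cap \langle N, e_B\rangle$, and the hypothesis $E_B(b^* w_n a) \not\to 0$ prevents $y_0 = 0$ (this is where one checks $\langle x, w_n x w_n^*\rangle \geq \|E_B(b^* w_n a)\|_2^2$ up to constants does not go to zero). From a nonzero positive $y_0 \in A' \cap \langle N, e_B\rangle_+$ with $\widehat{\mathrm{Tr}}(y_0) < \infty$ one extracts, by a spectral projection argument, a nonzero finite-trace projection, and then the standard dictionary between such projections and intertwiners (see \cite{Po03}, \cite{Ue12}, \cite{HV12}) produces the partial isometry $v$ and the homomorphism $\theta\colon eAe \to fBf$ required by Definition~1.3.7.

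The converse $\rm(i)\Rightarrow\neg\rm(ii)$ is the easy direction: given $v$ and $\theta$ with $v\theta(x) = xv$ for $x \in eAe$, taking $x = w_n$ unitary in $A$ (after cutting by $e$ and using that $A$ is finite so unitaries of $eAe$ suffice) gives $v^* w_n = \theta(w_n) v^* $-type relations, and then $E_B(v^* w_n v) = \theta(w_n) v^* v$ has constant nonzero norm $\|v^*v\|_2 \neq 0$ independent of $n$, contradicting the strong convergence to $0$ applied with $a = b = v$ (note $v \in M$ and one checks $v$ may be taken in $qMp$ after adjusting supports). I expect the main obstacle to be purely bookkeeping: carefully reducing from the semifinite $B$ to a finite corner while keeping track of the conditional expectation $E_B$ and verifying that the basic-construction trace $\widehat{\mathrm{Tr}}$ is finite on $a e_B a^*$ precisely under the stated hypotheses; none of this is conceptually new beyond \cite{Po01,Po03,Ue12,HV12}, which is why I would cite those for the technical lemmas rather than reprove them.
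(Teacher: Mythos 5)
The paper does not actually prove this theorem: it is quoted verbatim from the literature (\cite{Po01}, \cite{Po03}, \cite{Ue12}, \cite{HV12}) and used as a black box, so there is no in-paper argument to compare against. Judged on its own terms, your reconstruction follows the standard Popa averaging scheme and the overall architecture (easy direction via $v\theta(x)=xv$; hard direction via the minimal-norm element of the $\mathcal{U}(A)$-conjugation orbit of $ae_Ba^*$) is the right one. Two corrections to the easy direction: from $xv=v\theta(x)$ one gets $E_B(v^*xv)=E_B(v^*v)\theta(x)$, not $\theta(x)v^*v$, since $v^*v$ need not lie in $B$; the uniform lower bound is then $\tau(E_B(v^*v)^2)>0$.

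There are two points where your sketch misplaces or skips the real difficulty. First, the cutting by central projections $z_k\in\mathcal{Z}(B)$ with $\tau(z_k)<\infty$ belongs to the \emph{semifinite} version (Theorem \ref{popa embed2}); in the present statement $B$ is assumed finite and $\tau$ is a finite trace, so $\widehat{\mathrm{Tr}}(ae_Ba^*)=\tau(E_B(a^*a))<\infty$ automatically and no localization in $B$ is needed. The genuine obstacle in the ``non-finite'' version is that $M$ (hence $pMp$) carries no trace, so the canonical weight on the basic construction $\langle pMp,e_B\rangle$ obtained by composing the dual operator-valued weight with a state on $pMp$ is \emph{not} tracial, and without traciality the minimal-norm averaging argument collapses (conjugation by $w\in\mathcal{U}(A)$ must be a $\|\cdot\|_2$-isometry). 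The fix, which is the actual content of \cite{Ue12} and \cite{HV12}, is to work in $\mathcal{N}:=\mathbb{B}(L^2(qMp))\cap(B^{\mathrm{op}})'$, which carries a canonical normal semifinite faithful trace precisely because $B$ is finite with the trace $\tau$; your sketch silently assumes $\widehat{\mathrm{Tr}}$ is a trace, which is the one step that cannot be waved through when $M$ is of type III. Second, condition $\neg\mathrm{(ii)}$ as stated only gives, for each sequence of unitaries, \emph{some} $a,b$ witnessing non-convergence; to make the convex-hull element nonzero you need the uniform form (an $\varepsilon>0$ and finitely many $a_1,\dots,a_k$ with $\sum_{i,j}\|E_B(a_i^*wa_j)\|_{2,\tau}^2\geq\varepsilon$ for all $w\in\mathcal{U}(A)$), and the pairing should be $\widehat{\mathrm{Tr}}(be_Bb^*\,wae_Ba^*w^*)=\|E_B(b^*wa)\|_{2,\tau}^2$ rather than $\langle x,w_nxw_n^*\rangle$. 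Both are standard repairs, but as written the sketch would not close.
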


\begin{Thm}[\textit{semifinite version, }{\cite{CH08}\cite{HR10}}]\label{popa embed2}
Let $M$ be a semifinite von Neumann algebra with a faithful normal semifinite trace $\mathrm{Tr}$, and $B\subset M$ be a von Neumann subalgebra with $\mathrm{Tr}_{B}:=\mathrm{Tr}|_{B}$ semifinite. Denote by $E_{B}$  the unique $\mathrm{Tr}$-preserving conditional expectation from $M$ onto $B$. 
Let $q$ be a $\mathrm{Tr}$-finite projection in $M$ and $A\subset qMq$ a von Neumann subalgebra. Then the following conditions are equivalent.
\begin{itemize}
	\item[$\rm (i)$]There exists a non-zero projection $p\in B$ with $\mathrm{Tr}_{B}(p)<\infty$ such that $A\preceq_{eMe}pBp$, where $e:=p\vee q$.
	\item[$\rm (ii)$]There exists no sequence $(w_n)_n$ in unitaries of $A$ such that $E_{B}(b^{\ast}w_n a)\rightarrow 0$ strongly for any $a,b\in qM$.
\end{itemize}
We use the same symbol $A\preceq_{M}B$ if one of these conditions holds.
\end{Thm}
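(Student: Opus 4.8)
The plan is to carry out Popa's intertwining argument (as in \cite{Po01,Po03}) inside the \emph{semifinite Jones basic construction} $\langle M,e_B\rangle:=(JBJ)'\cap\mathbb{B}(L^2(M,\mathrm{Tr}))$, localizing everything by $\mathrm{Tr}_B$-finite projections of $B$ to compensate for the fact that neither $M$ nor $B$ is finite. First I would recall that $\langle M,e_B\rangle$ is generated by $M$ and the Jones projection $e_B$ of $E_B$, that $e_Bxe_B=E_B(x)e_B$ for $x\in M$, and that it carries a canonical normal semifinite faithful trace $\widehat{\mathrm{Tr}}$ determined by $\widehat{\mathrm{Tr}}(xe_By)=\mathrm{Tr}(xy)$. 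The finiteness facts to set up are: if $p\in B$ is a projection with $\mathrm{Tr}_B(p)<\infty$ then $pe_Bp$ is $\widehat{\mathrm{Tr}}$-finite; and since $A\subset qMq$ with $q$ $\mathrm{Tr}$-finite, any $x\in qMp$ has $\mathrm{Tr}(x^*x)\le\|x\|^2\mathrm{Tr}(p)<\infty$, so elements $xe_Bx^*$ with $x\in qMp$ are $\widehat{\mathrm{Tr}}$-finite and lie under $q$, and every unitary of $A$, being a unitary of the corner $qMq$, conjugates such elements into $q\langle M,e_B\rangle q$ again.

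For the direction (i)$\Rightarrow$(ii), from $A\preceq_{eMe}pBp$ ($e=p\vee q$) one extracts, exactly as in Popa's finite setting and now carried out inside $\langle eMe,e_{pBp}\rangle\subset\langle M,e_B\rangle$, a nonzero projection $r\in A'\cap\langle M,e_B\rangle$ with $\widehat{\mathrm{Tr}}(r)<\infty$ and $r\le q$. If a sequence $(w_n)$ as in (ii) existed, approximate $r$ in $\|\cdot\|_{2,\widehat{\mathrm{Tr}}}$ by finite sums $d_0=\sum_{x\in F}xe_Bx^*$ with $x\in qMp'$, $\mathrm{Tr}_B(p')<\infty$; then, using $\widehat{\mathrm{Tr}}\big((xe_Bx^*)(wye_By^*w^*)\big)=\|E_B(x^*wy)\|_{2,\mathrm{Tr}_B}^2$ together with $w_nrw_n^*=r$, one gets $\sum_{x,y\in F}\|E_B(x^*w_ny)\|_{2,\mathrm{Tr}_B}^2\ge\widehat{\mathrm{Tr}}(r)-\varepsilon>0$ for all $n$. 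Since each $E_B(x^*w_ny)$ lies in the \emph{finite} algebra $p'Bp'$, strong convergence to $0$ would force the $\|\cdot\|_{2,\mathrm{Tr}_B}$-norms to $0$, a contradiction.

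For (ii)$\Rightarrow$(i), the heart of the proof, I argue by contraposition: assume $A\not\preceq_{eMe}pBp$ for every $\mathrm{Tr}_B$-finite $p\in B$. First reduce this to the combinatorial statement that for every $\varepsilon>0$, every $\mathrm{Tr}_B$-finite $p\in B$ and every finite $F\subset qMp$ there is $w\in\mathcal{U}(A)$ with $\|E_B(x^*wy)\|_{2,\mathrm{Tr}_B}<\varepsilon$ for all $x,y\in F$. If this failed for some $\varepsilon,p,F$, then $d_0:=\sum_{x\in F}xe_Bx^*$ is a $\widehat{\mathrm{Tr}}$-finite positive element with $\widehat{\mathrm{Tr}}(d_0\,wd_0w^*)=\sum_{x,y\in F}\|E_B(x^*wy)\|_{2,\mathrm{Tr}_B}^2\ge\varepsilon^2$ for all $w\in\mathcal{U}(A)$; taking $d$ the unique element of minimal $\|\cdot\|_{2,\widehat{\mathrm{Tr}}}$-norm in $\overline{\mathrm{conv}}^{\,\|\cdot\|_{2,\widehat{\mathrm{Tr}}}}\{wd_0w^*:w\in\mathcal{U}(A)\}$ gives $d\in A'\cap\langle M,e_B\rangle$, $d\ge0$, $\widehat{\mathrm{Tr}}(d)<\infty$, and $\widehat{\mathrm{Tr}}(d_0d)\ge\varepsilon^2$, so $d\ne0$; a spectral projection $r:=1_{[t,\infty)}(d)$, $0<t<\|d\|$, is a nonzero $\widehat{\mathrm{Tr}}$-finite projection in $A'\cap\langle M,e_B\rangle$ under $q$. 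Converting $r$ back into intertwining data — $rL^2(M,\mathrm{Tr})$ is an $A$-$B$ subbimodule of $qL^2(M,\mathrm{Tr})$ that is finitely generated as a right $B$-module — produces a nonzero partial isometry $v\in M$, projections $e_0\in A$ and $f\in B$ with $\mathrm{Tr}_B(f)<\infty$, and a normal unital $*$-homomorphism $\theta\colon e_0Ae_0\to fBf$ with $v\theta(\cdot)=(\cdot)v$; choosing $p\ge f$ a $\mathrm{Tr}_B$-finite projection in $B$ and $e:=p\vee q$ gives $A\preceq_{eMe}pBp$. Finally, diagonalizing the combinatorial statement over an increasing exhaustion $p_n\uparrow1$ by $\mathrm{Tr}_B$-finite projections of $B$, over finite subsets of $\bigcup_nqMp_n$ (which is $\sigma$-strongly dense in $qM$), and over $\varepsilon=1/n$, yields one sequence $(w_n)\subset\mathcal{U}(A)$ with $E_B(x^*w_ny)\to0$ strongly for $x,y$ in that dense set; a uniform-in-$n$ $\|\cdot\|_{2,\mathrm{Tr}}$-continuity estimate for $x\mapsto E_B(x^*w_ny)$ extends this to all of $qM$, establishing (ii).

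The main obstacle is genuinely the non-finiteness: every step that is a one-line computation in Popa's original finite setting — existence and normality of $\widehat{\mathrm{Tr}}$ on $\langle M,e_B\rangle$, the minimal-norm argument, and above all the passage from a finite-trace $A$-central projection $r$ to an intertwiner realized \emph{inside $eMe$ with $p\in B$ of finite $\mathrm{Tr}_B$} — must be reorganized so that all objects stay localized under $\mathrm{Tr}_B$-finite projections of $B$, and the final diagonalization over such projections must be made to produce a single sequence. This bookkeeping is precisely what is carried out in the cited works \cite{CH08,HR10}; alternatively one can bypass $\langle M,e_B\rangle$ by applying Theorem \ref{popa embed} to the finite algebras $A\subset qMq$ and $pBp$ for each $\mathrm{Tr}_B$-finite $p$ and then performing the same diagonalization over $p$.
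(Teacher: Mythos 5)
The paper does not prove this statement: it is recalled verbatim from \cite{CH08} and \cite{HR10}, with only the remark after it (localization along a net of $\mathrm{Tr}_B$-finite projections $p_j\uparrow 1$) extracted from the proof in those references. Your argument is correct and is essentially the proof given there — the semifinite basic construction $\langle M,e_B\rangle$ with its canonical trace, the minimal-$\|\cdot\|_{2,\widehat{\mathrm{Tr}}}$-norm element in the closed convex hull of $\{wd_0w^*\}$, a finite-trace $A$-central spectral projection under $q$, conversion back to an intertwiner landing in a $\mathrm{Tr}_B$-finite corner $pBp$, and a diagonalization over $\mathrm{Tr}_B$-finite projections and finite subsets — so there is nothing to add.
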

\noindent
By the proof of the semifinite one, we have that $A\not\preceq_M B$ if and only if there exists a net $(p_j)$ of $\mathrm{Tr}_B$-finite projections in $B$ which converges to 1 strongly and $A\not\preceq_{e_jMe_j}p_jBp_j$, where $e_j:=p_j\vee q$. 
We also mention that when $A$ is diffuse and $B$ is atomic, then $A\not\preceq_M B$. This follows from the existence of a normal unital $*$-homomorphism $\theta$ in the definition.

We next recall relative amenability introduced in $\cite{OP07}$ and $\cite{PV11}$.

\begin{Def}\upshape
Let $(M,\tau)$ be a tracial von Neumann algebra, $q\in M$ a projection, and $B\subset M$ and $A\subset qMq$ be von Neumann subalgebras. 
We say $A$ is amenable relative to $B$ in $M$ if there exists a state $\phi$ on $q\langle M,e_B\rangle q$ such that  $\phi$ is $A$-central and the restriction of $\phi$ on $A$ coincides with $\tau$.
\end{Def}

\begin{Def}\upshape
Let $(M,\tau)$ and $(B,\tau_B) $ be tracial von Neumann algebras and $A\subset M$ be a von Neumann subalgebra. 
We say an $M$-$B$-bimodule ${}_M K_B$ is left $A$-amenable if there exists a state $\phi$ on $\mathbb{B}(K)\cap (B^{\rm op})'$ such that  $\phi$ is $A$-central and the restriction of $\phi$ on $A$ coincides with $\tau$.
\end{Def}

We note that for any $B\subset M$ and $A\subset qMq$, amenability of $A$ relative to $B$ in $M$ is equivalent to left $A$-amenability of $qL^2(M)$ as a $qMq$-B-bimodule, since $q\langle M,e_B\rangle q= q(\mathbb{B}(L^2(M))\cap (B^{\rm op})')q=\mathbb{B}(qL^2(M))\cap (B^{\rm op})'$. 
We also mention that when $B$ is amenable, then since $\mathbb{B}(K)\cap (B^{\rm op})'$ is amenable, there exists a conditional expectation from $\mathbb{B}(K)$ onto $\mathbb{B}(K)\cap (B^{\rm op})'$. 
In this case, relative amenability of $A$ (or left $A$-amenability) means amenability of $A$.

\section{\bf Bi-exactness}

\subsection{\bf Two definitions of bi-exactness}

We introduce two notions of bi-exactness on discrete quantum groups. These notions are equivalent for discrete groups as we have seen in Introduction. 
Recall that $C_{\rm red}(\mathbb{G})=\rho(C(\mathbb{G}))$ and $UC_{\rm red}(\mathbb{G})U=\lambda(C(\mathbb{G}))=C_{\rm red}(\mathbb{G})^{\rm op}$, where $U=J\hat{J}=\hat{J}J$. Basically we use $UC_{\rm red}(\mathbb{G})U$ instead of $C_{\rm red}(\mathbb{G})^{\rm op}$.

\begin{Def}\upshape\label{bi-ex}
Let $\hat{\mathbb{G}}$ be a discrete quantum group. We say $\hat{\mathbb{G}}$ is \textit{bi-exact} if it satisfies following conditions:
\begin{itemize}
	\item[$\rm (i)$] the quantum group $\hat{\mathbb{G}}$ is exact (i.e.\ $C_{\rm red}(\mathbb{G})$ is exact);
	\item[$\rm (ii)$]  there exists a u.c.p.\ map $\theta\colon C_{\rm red}(\mathbb{G})\otimes  C_{\rm red}(\mathbb{G}) \rightarrow \mathbb{B}(L^2(\mathbb{G}))$ such that $\theta(a\otimes b)-aUbU\in \mathbb{K}(L^2(\mathbb{G}))$ for any $a,b \in  C_{\rm red}(\mathbb{G})$.
\end{itemize}
\end{Def}
\begin{Def}\upshape\label{st bi-ex}
Let $\hat{\mathbb{G}}$ be a discrete quantum group. We say $\hat{\mathbb{G}}$ is \textit{strongly bi-exact} if there exists a unital $C^*$-subalgebra ${\cal B}$ in $\ell^{\infty}(\hat{\mathbb{G}})$ such that  
\begin{itemize}
	\item[$\rm (i)$] the algebra $\cal B$ contains $c_0(\hat{\mathbb{G}})$, and the quotient ${\cal B}_{\infty}:={\cal B}/c
_0(\hat{\mathbb{G}})$ is nuclear;
	\item[\rm (ii)] the left translation action on $\ell^\infty(\hat{\mathbb{G}})$ induces an amenable action on ${\cal B}_{\infty}$, and the right one induces the trivial action on ${\cal B}_{\infty}$.
\end{itemize}
\end{Def}

\begin{Rem}\upshape
Amenability implies strong bi-exactness. In fact, we can choose ${\cal B}:=c_0(\hat{\mathbb{G}})+\mathbb{C}1$. 
In this case, both the left and right actions on ${\cal B}_\infty(\simeq \mathbb{C})$ are trivial.
\end{Rem}

We first observe relationship between bi-exactness and strong bi-exactness. 
In (i) of the definition of strong bi-exactness, nuclearity of ${\cal B}_\infty$ is equivalent to that of $\cal B$. Moreover, together with the condition (ii), the $C^*$-subalgebra ${\cal C}_l\subset \mathbb{B}(L^2(\mathbb{G}))$ generated by $\hat{\lambda}({\cal B})$ and $C_{\rm red}(\mathbb{G})(=\rho(C(\mathbb{G})))$ is also nuclear. 
In fact, the quotient image of ${\cal C}_l$ in $\mathbb{B}(L^2(\mathbb{G}))/\mathbb{K}(L^2(\mathbb{G}))$ is nuclear since there is a canonical surjective map from $\hat{\mathbb{G}} \ltimes_{\rm f} {\cal B}_\infty$ to ${\cal C}_l$ and $\hat{\mathbb{G}} \ltimes_{\rm f} {\cal B}_\infty$ is nuclear by amenability of the action. 
Then ${\cal C}_l$ is an extension of $\mathbb{K}(L^2(\mathbb{G}))$ by this quotient image, and hence is nuclear. 
Note that ${\cal C}_l$ contains $\mathbb{K}(L^2(\mathbb{G}))$, since it contains $C_{\rm red}(\mathbb{G})$ and the orthogonal projection from $L^2(\mathbb{G})$ onto $\mathbb{C}\hat{1}$. 
We put ${\cal C}_r:=U{\cal C}_lU$, where $U:=J\hat{J}$. Triviality of the right action in (ii) implies that all commutators of $UC_{\rm red}(\mathbb{G})U$ and $\hat{\lambda}({\cal B})$ (respectively $C_{\rm red}(\mathbb{G})$ and $U\hat{\lambda}({\cal B})U$) are contained in $\mathbb{K}(L^2(\mathbb{G}))$. 
This implies that all commutators of ${\cal C}_l$ and ${\cal C}_r$ are also contained in $\mathbb{K}(L^2(\mathbb{G}))$.
Thus we obtained the following $\ast$-homomorphism:

\begin{equation*}
\nu\colon {\cal C}_l \otimes {\cal C}_r \longrightarrow \mathbb{B}(L^2(\mathbb{G}))/\mathbb{K}(L^2(\mathbb{G})); a\otimes b \longmapsto ab.
\end{equation*}
This map is an extension of the multiplication map on $C_{\rm red}(\mathbb{G}) \otimes UC_{\rm red}(\mathbb{G})U$, and so this multiplication map is nuclear since so is ${\cal C}_l \otimes {\cal C}_r$. Finally by the lifting theorem of Choi and Effros $\cite{CE76}$ (or see $\cite[\textrm{Theorem C.3}]{BO08}$), we obtain a u.c.p.\ lift $\theta$ of the multiplication map. 
Thus we observed that strong bi-exactness implies bi-exactness (exactness of $\hat{\mathbb{G}}$ follows from nuclearity of ${\cal C}_l$). The intermediate object ${\cal C}_l$ is important for us, and we will use this algebra in the next subsection. 
We summary these observations as follows.

\begin{Lem}\label{intermediate}
Strong bi-exactness implies bi-exactness. 
The following condition is an intermediate condition between bi-exactness and strong bi-exactness:
\begin{itemize}
	\item There exists a nuclear $C^*$-algebra ${\cal C}_l\subset \mathbb{B}(L^2(\mathbb{G}))$ which contains $C_{\rm red}(\mathbb{G})$ and $\mathbb{K}(L^2(\mathbb{G}))$, and all commutators of ${\cal C}_l$ and ${\cal C}_r(:=U{\cal C}_lU)$ are contained in $\mathbb{K}(L^2(\mathbb{G}))$.
\end{itemize}
\end{Lem}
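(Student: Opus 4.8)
The plan is to prove the two implications encoded in the statement, namely that strong bi-exactness forces the intermediate condition, and that the intermediate condition in turn forces bi-exactness; composing them then gives the first sentence. Throughout I use the identifications recalled in Section~\ref{CQG}: $C_{\rm red}(\mathbb{G})=\rho(C(\mathbb{G}))$, $UC_{\rm red}(\mathbb{G})U=\lambda(C(\mathbb{G}))=C_{\rm red}(\mathbb{G})^{\rm op}$ and $U\hat\rho(c_0(\hat{\mathbb{G}}))U=\hat\lambda(c_0(\hat{\mathbb{G}}))$, together with $U=U^*=U^{-1}$ coming from $U=J\hat J=\hat J J$.

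First, assume strong bi-exactness with ${\cal B}\subset\ell^\infty(\hat{\mathbb{G}})$ as in Definition~\ref{st bi-ex}. Since $c_0(\hat{\mathbb{G}})$ is nuclear and nuclearity is preserved under quotients and extensions, nuclearity of ${\cal B}_\infty$ is equivalent to that of ${\cal B}$. Because every irreducible corepresentation of a compact quantum group is finite-dimensional, $\hat\lambda(c_0(\hat{\mathbb{G}}))\subseteq\mathbb{K}(L^2(\mathbb{G}))$; in particular $\hat\lambda(c_0(\hat{\mathbb{G}}))$ contains the rank-one projection onto $\mathbb{C}\hat 1$, and since $\hat 1$ is cyclic for $\rho(C(\mathbb{G}))$ the $C^*$-algebra ${\cal C}_l$ generated by $\hat\lambda({\cal B})$ and $C_{\rm red}(\mathbb{G})$ contains $\mathbb{K}(L^2(\mathbb{G}))$. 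Writing $\pi\colon\mathbb{B}(L^2(\mathbb{G}))\to\mathbb{B}(L^2(\mathbb{G}))/\mathbb{K}(L^2(\mathbb{G}))$ for the quotient map, the map $\pi\circ\hat\lambda|_{\cal B}$ kills $c_0(\hat{\mathbb{G}})$ and so descends to ${\cal B}_\infty$; this descended representation together with the unitary corepresentation coming from $\hat{\mathscr W}$ assembles, via the covariance built into the left translation action, into a covariant representation of the quotient action $\hat{\mathbb{G}}\curvearrowright{\cal B}_\infty$ in the Calkin algebra, yielding a surjection $\hat{\mathbb{G}}\ltimes_{\rm f}{\cal B}_\infty\twoheadrightarrow\pi({\cal C}_l)$. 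Amenability of the action on ${\cal B}_\infty$ (plus nuclearity of ${\cal B}_\infty$) makes $\hat{\mathbb{G}}\ltimes_{\rm f}{\cal B}_\infty$ nuclear, hence $\pi({\cal C}_l)$ is nuclear, and as an extension of the nuclear algebra $\mathbb{K}(L^2(\mathbb{G}))$ by the nuclear algebra $\pi({\cal C}_l)$ the algebra ${\cal C}_l$ is nuclear. Finally put ${\cal C}_r:=U{\cal C}_lU$; it is generated by $\lambda(C(\mathbb{G}))$ and by $U\hat\lambda({\cal B})U\subseteq\hat\rho(\ell^\infty(\hat{\mathbb{G}}))$, so $[C_{\rm red}(\mathbb{G}),UC_{\rm red}(\mathbb{G})U]=0$ and $[\hat\lambda(\ell^\infty(\hat{\mathbb{G}})),U\hat\lambda(\ell^\infty(\hat{\mathbb{G}}))U]=0$, while triviality of the right translation action on ${\cal B}_\infty$ says exactly that $[UC_{\rm red}(\mathbb{G})U,\hat\lambda({\cal B})]$ and $[C_{\rm red}(\mathbb{G}),U\hat\lambda({\cal B})U]$ lie in $\mathbb{K}(L^2(\mathbb{G}))$. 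Combining these, $[{\cal C}_l,{\cal C}_r]\subseteq\mathbb{K}(L^2(\mathbb{G}))$, so the intermediate condition holds.

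Second, assume the intermediate condition. Exactness of $\hat{\mathbb{G}}$ is immediate, since $C_{\rm red}(\mathbb{G})\subseteq{\cal C}_l$ and subalgebras of nuclear $C^*$-algebras are exact. For condition~(ii) of Definition~\ref{bi-ex}, observe that $\pi({\cal C}_l)$ and $\pi({\cal C}_r)$ are commuting nuclear $C^*$-subalgebras of the Calkin algebra, so the Calkin multiplication of these commuting ranges gives a $*$-homomorphism $\nu\colon{\cal C}_l\otimes{\cal C}_r\to\mathbb{B}(L^2(\mathbb{G}))/\mathbb{K}(L^2(\mathbb{G}))$, $a\otimes b\mapsto\pi(ab)$, well defined on the (unique, since both factors are nuclear) tensor product. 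Its restriction $\nu_0$ to $C_{\rm red}(\mathbb{G})\otimes UC_{\rm red}(\mathbb{G})U$ is a unital $*$-homomorphism on a separable $C^*$-algebra, and since it extends to the nuclear algebra ${\cal C}_l\otimes{\cal C}_r$ it is a nuclear map. By the Choi--Effros lifting theorem (\cite[Theorem C.3]{BO08}) there is a u.c.p.\ lift $\tilde\theta\colon C_{\rm red}(\mathbb{G})\otimes UC_{\rm red}(\mathbb{G})U\to\mathbb{B}(L^2(\mathbb{G}))$ with $\pi\circ\tilde\theta=\nu_0$. Setting $\theta(a\otimes b):=\tilde\theta(a\otimes UbU)$ — i.e.\ precomposing with $\iota\otimes(U\cdot U)$, a $*$-isomorphism $C_{\rm red}(\mathbb{G})\to UC_{\rm red}(\mathbb{G})U$ — we obtain a u.c.p.\ map $\theta\colon C_{\rm red}(\mathbb{G})\otimes C_{\rm red}(\mathbb{G})\to\mathbb{B}(L^2(\mathbb{G}))$ with $\theta(a\otimes b)-aUbU\in\ker\pi=\mathbb{K}(L^2(\mathbb{G}))$ for all $a,b\in C_{\rm red}(\mathbb{G})$. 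Thus $\hat{\mathbb{G}}$ is bi-exact, and composing the two implications yields that strong bi-exactness implies bi-exactness.

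I expect the main obstacle to be the first movement — specifically, checking that the descended representation $\pi(\hat\lambda|_{\cal B})$ together with the unitary corepresentation coming from $\hat{\mathscr W}$ genuinely forms a covariant representation of the quotient action $\hat{\mathbb{G}}\curvearrowright{\cal B}_\infty$ in the Calkin algebra, so that the surjection out of $\hat{\mathbb{G}}\ltimes_{\rm f}{\cal B}_\infty$ exists, and, in the non-Kac generality, the bookkeeping of which commutators among $C_{\rm red}(\mathbb{G})$, $\hat\lambda({\cal B})$ and their $U$-conjugates vanish exactly versus only modulo $\mathbb{K}(L^2(\mathbb{G}))$; here the relations between $J$, $\hat J$ and $U$ recalled in Section~\ref{CQG} do the essential work. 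Everything else — stability of nuclearity under extensions and quotients, coincidence of the minimal and maximal $C^*$-tensor norms when a factor is nuclear, and the Choi--Effros lifting of nuclear maps into quotient $C^*$-algebras — is standard.
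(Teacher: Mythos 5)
Your proposal is correct and follows essentially the same route as the paper: take ${\cal C}_l=C^*\{\hat\lambda({\cal B}),C_{\rm red}(\mathbb{G})\}$, deduce its nuclearity from the surjection $\hat{\mathbb{G}}\ltimes_{\rm f}{\cal B}_\infty\twoheadrightarrow \pi({\cal C}_l)$ and the extension by $\mathbb{K}(L^2(\mathbb{G}))$, read the commutator condition off the triviality of the right translation action, and then lift the resulting multiplication $*$-homomorphism into the Calkin algebra via Choi--Effros. The extra details you supply (cyclicity of $\hat 1$ giving $\mathbb{K}(L^2(\mathbb{G}))\subset{\cal C}_l$, the descent of the covariant representation to ${\cal B}_\infty$, and the $U$-conjugation bookkeeping identifying $UC_{\rm red}(\mathbb{G})U$ with $C_{\rm red}(\mathbb{G})^{\rm op}$) are consistent with what the paper leaves implicit.
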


Examples of bi-exact quantum groups were first given by Vergnioux $\cite{Ve05}$. He constructed a u.c.p.\ lift directly for free quantum groups. Then he, in a joint work with Vaes $\cite{VV05}$, gave a new proof for bi-exactness of $\hat{A}_o(F)$ and they in fact proved strong bi-exactness. 
In the proof, they only used the fact that $A_o(F)$ is monoidally equivalent to some $\mathrm{SU}_q(2)$ with $-1<q<1$ and $q\neq0$, seeing some estimates on intertwiner spaces of $\mathrm{SU}_q(2)$ $\cite[\textrm{Lemma 8.1}]{VV05}$. 
Since the dual of $\mathrm{SO}_q(3)$ is a quantum subgroup of some dual of $\mathrm{SU}_q(2)$, intertwiner spaces of $\mathrm{SO}_q(3)$ have the same estimates. 
From this fact, we can deduce strong bi-exactness of a dual of a compact quantum group which is monoidally equivalent to $\mathrm{SO}_q(3)$ (by the same argument as that for $\mathrm{SU}_q(2)$). 
We also mention that strong bi-exactness of $A_u(F)$ was proved by the same argument $\cite{VV08}$. 

We summary these observations as follows.
\begin{Thm}\label{example st bi-exact}
Let $\mathbb{G}$ be a compact quantum group which is monoidally equivalent to $\mathrm{SU}_q(2)$, $\mathrm{SO}_q(3)$, or $A_u(F)$, where $-1<q<1$, $q\neq 0$, $F$ is not a scalar multiple of a $2$ by $2$ unitary. Then the dual $\hat{\mathbb{G}}$ is strongly bi-exact.
\end{Thm}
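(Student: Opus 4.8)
The plan is to reduce everything to the work of Vaes--Vergnioux, using the fact that their construction of the boundary $C^*$-algebra $\mathcal B\subset\ell^\infty(\hat{\mathbb G})$ in Definition \ref{st bi-ex} uses only the representation category of $\mathbb G$ together with the quantitative estimates on the morphism spaces of $\mathrm{SU}_q(2)$ recorded in \cite[Lemma 8.1]{VV05}. Since a monoidal equivalence identifies these morphism spaces, and hence the relevant estimates, with those of the model quantum group, strong bi-exactness will be a monoidal equivalence invariant; equivalently, one may rebuild $\mathcal B$ directly for any $\mathbb G$ in the given classes from the same combinatorial data.

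First I would make the invariance precise. Given a monoidal equivalence between $\mathbb G$ and a model $\mathbb H$, one uses the Vaes--Vergnioux linking algebra to transport the fusion-theoretic data: the unital $C^*$-subalgebra $\mathcal B$, its quotient $\mathcal B_\infty=\mathcal B/c_0(\hat{\mathbb G})$, and the induced left and right actions on $\mathcal B_\infty$ all carry over, and one checks that nuclearity of $\mathcal B_\infty$, amenability of the left action, and triviality of the right action are preserved under this transport. This step reduces the theorem to verifying strong bi-exactness for a single representative of each monoidal equivalence class.

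Next I would invoke the known cases and handle $\mathrm{SO}_q(3)$. For $\mathrm{SU}_q(2)$ — and therefore for $A_o(F)$ whenever $F$ is not a scalar multiple of a $2\times 2$ unitary, since such $A_o(F)$ is monoidally equivalent to some $\mathrm{SU}_q(2)$ with $-1<q<1$, $q\neq 0$ — strong bi-exactness is \cite[Section 8]{VV05}, and for $A_u(F)$ it is \cite{VV08}; in each case the sole analytic input is \cite[Lemma 8.1]{VV05}. For $\mathbb G=\mathrm{SO}_q(3)$, the dual $\hat{\mathrm{SO}}_q(3)$ is a quantum subgroup of $\hat{\mathrm{SU}}_q(2)$, its irreducibles being the even-spin corepresentations, so the morphism spaces of $\mathrm{SO}_q(3)$ form a full tensor subcategory of those of $\mathrm{SU}_q(2)$ and satisfy exactly the estimates of \cite[Lemma 8.1]{VV05}. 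Re-running the argument of \cite[Section 8]{VV05} with these estimates produces a boundary $\mathcal B\subset\ell^\infty(\hat{\mathrm{SO}}_q(3))$ with nuclear quotient, amenable induced left action, and trivial induced right action, so $\hat{\mathrm{SO}}_q(3)$ is strongly bi-exact. Together with the invariance of the first step, this covers all three families and proves the theorem.

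I expect the main obstacle to be the bookkeeping in the first step: one has to verify that the combinatorial construction of $\mathcal B$ and the checks of amenability and triviality of the two actions in \cite{VV05} genuinely factor through the linking algebra, so that a monoidal equivalence $\mathbb G\sim\mathbb H$ yields an isomorphism of the relevant boundary data rather than merely an equivalence of categories. Once this is set up, the remaining verifications are immediate from \cite{VV05} and \cite{VV08}.
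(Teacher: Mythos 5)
Your proposal is correct and follows essentially the same route as the paper: the paper likewise observes that the Vaes--Vergnioux construction of the boundary in \cite[Section 8]{VV05} (and its adaptation in \cite{VV08} for $A_u(F)$) only uses the intertwiner-space estimates of \cite[Lemma 8.1]{VV05}, which transfer under monoidal equivalence, and that $\mathrm{SO}_q(3)$ inherits these estimates because its dual is a quantum subgroup of a dual of $\mathrm{SU}_q(2)$. The paper states this as a summary of observations rather than a detailed proof, so your explicit attention to making the monoidal-equivalence transport precise via the linking algebra is exactly the bookkeeping the paper leaves implicit.
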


In $\cite{Is12_2}$, we introduced condition $\rm (AOC)^+$, which is similar to condition $\rm (AO)^+$ on continuous cores, and proved that von Neumann algebras of free quantum groups satisfy this condition. 
In the proof we also used only the fact that $A_o(F)$ is monoidally equivalent to some $\mathrm{SU}_q(2)$ and hence we actually proved the following statement. 

\begin{Thm}\label{example bi-exact}
Let $\mathbb{G}$ be a compact quantum group which is monoidally equivalent to $\mathrm{SU}_q(2)$, $\mathrm{SO}_q(3)$, or $A_u(F)$, where $-1<q<1$, $q\neq 0$, $F$ is not a scalar multiple of a $2$ by $2$ unitary. Then $L^\infty(\mathbb{G})$ and its Haar state satisfy condition $\rm (AOC)^+$ with the dense $C^*$-algebra $C_{\rm red}(\mathbb{G})$.
\end{Thm}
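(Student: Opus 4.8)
The plan is to observe that the proof of condition $\rm (AOC)^+$ for free orthogonal quantum groups in $\cite{Is12_2}$ uses essentially nothing about $\mathbb{G}$ beyond two ingredients: exactness of $C_{\rm red}(\mathbb{G})$, and a single family of norm estimates on the morphism spaces of $\mathrm{Rep}(\mathbb{G})$, namely the one recorded for $\mathrm{SU}_q(2)$ in $\cite[\textrm{Lemma 8.1}]{VV05}$. Both are monoidal invariants, or follow from such, so it suffices to supply them for each of the three monoidal equivalence classes. Recall that $\rm (AOC)^+$ for $(L^\infty(\mathbb{G}),h)$ with dense $C^*$-algebra $C_{\rm red}(\mathbb{G})$ amounts to: (a) $\sigma^h$ restricts to a norm-continuous action on $C_{\rm red}(\mathbb{G})$; (b) local reflexivity of $C_{\rm red}(\mathbb{G})\rtimes_{\rm r}\mathbb{R}$; (c) a u.c.p.\ map $\theta$ on $(C_{\rm red}(\mathbb{G})\rtimes_{\rm r}\mathbb{R})\otimes(C_{\rm red}(\mathbb{G})\rtimes_{\rm r}\mathbb{R})^{\rm op}$ with $\theta(a\otimes b^{\rm op})-ab^{\rm op}\in\mathbb{K}(L^2(\mathbb{G}))\otimes\mathbb{B}(L^2(\mathbb{R}))$. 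Item (a) holds for any compact quantum group: $\sigma^h$ preserves each finite-dimensional matrix block of $C_{\rm red}(\mathbb{G})$ and acts there through the operators $F^{it}_x$, so $t\mapsto\sigma^h_t$ is point-norm continuous. Item (b) follows from exactness of $C_{\rm red}(\mathbb{G})$ — available for all three classes by Theorem \ref{example st bi-exact}, since strong bi-exactness forces $C_{\rm red}(\mathbb{G})$ to be exact — because exactness passes to crossed products by the amenable group $\mathbb{R}$, and exact $C^*$-algebras are locally reflexive.

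For item (c), the construction of $\cite{Is12_2}$ first produces, on $L^2(\mathbb{G})$ itself, a nuclear intermediate $C^*$-algebra ${\cal C}_l$ containing $C_{\rm red}(\mathbb{G})$ and $\mathbb{K}(L^2(\mathbb{G}))$ and commuting with ${\cal C}_r:=U{\cal C}_lU$ modulo the compacts, as in Lemma \ref{intermediate}; one then crosses everything with the modular $\mathbb{R}$-action and applies the Choi--Effros lifting theorem to obtain $\theta$. The only place in this construction where the particular quantum group is used is the verification that the commutators of the relevant Toeplitz-type operators built from the multiplicative unitary $\mathbb{V}$ and from $U$ are norm-limits of finite-rank operators, and this is exactly what the intertwiner estimate $\cite[\textrm{Lemma 8.1}]{VV05}$ for $\mathrm{SU}_q(2)$ provides. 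Thus the whole matter reduces to transporting that estimate.

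If $\mathbb{G}$ is monoidally equivalent to $\mathrm{SU}_q(2)$, this is immediate: the estimate is a statement about the rigid $C^*$-tensor category $\mathrm{Rep}(\mathbb{G})$ together with its quantum dimension — equivalently, about the fusion rules and the matrices $F_x$ — and all of these, as well as the Haar state, $J$, $\hat{J}$, $U=J\hat{J}$, and the continuous core, are built functorially from that invariant data, so the argument of $\cite{Is12_2}$ carries over verbatim. If $\mathbb{G}$ is monoidally equivalent to $\mathrm{SO}_q(3)$, we use that the dual of $\mathrm{SO}_q(3)$ is a quantum subgroup of the dual of $\mathrm{SU}_q(2)$, so that $\mathrm{Irred}(\mathrm{SO}_q(3))$ sits inside $\mathrm{Irred}(\mathrm{SU}_q(2))$ with the same morphism spaces and hence inherits the same estimate; the previous paragraph then passes it to any $\mathbb{G}$ with $\mathrm{Rep}(\mathbb{G})\cong\mathrm{Rep}(\mathrm{SO}_q(3))$. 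If $\mathbb{G}$ is monoidally equivalent to $A_u(F)$, the corresponding morphism-space estimates were established by Vaes and Vergnioux in $\cite{VV08}$ — the same input they used there to prove strong bi-exactness of $\hat{A}_u(F)$ — and monoidal equivalence again carries it to all of $\mathrm{Rep}(A_u(F))$.

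The main obstacle is not any single inequality but checking carefully that every step of $\cite{Is12_2}$ — the construction of ${\cal C}_l$, the description of the opposite algebra via $U$, the crossing with $\mathbb{R}$, and the passage to $C_h(L^\infty(\mathbb{G}))$ — genuinely depends only on the monoidally-invariant data $(\mathrm{Rep}(\mathbb{G}),\dim_q)$ together with the estimate $\cite[\textrm{Lemma 8.1}]{VV05}$, and never on a specific $C^*$-algebraic presentation such as the generators of $A_o(F)$. Once this is confirmed, the theorem follows by repeating the proof of the $A_o(F)$ case in each of the three situations.
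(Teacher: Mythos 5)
Your proposal is correct and follows the paper's own route exactly: the paper proves this theorem simply by observing that the argument of \cite{Is12_2} for $A_o(F)$ uses only the intertwiner-space estimates of $\cite[\textrm{Lemma 8.1}]{VV05}$, which are invariants of the monoidal equivalence class and are available for $\mathrm{SO}_q(3)$ (as a quantum subgroup of $\mathrm{SU}_q(2)$) and for $A_u(F)$ (via \cite{VV08}). The only caveat is your phrase that the Haar state, $J$, $U$ and the continuous core are ``built functorially'' from the categorical data --- they are not (the multiplicities $n_x$, hence $L^2(\mathbb{G})$ and even amenability of $L^\infty(\mathbb{G})$, change under monoidal equivalence); what actually transports is only the family of morphism-space norm estimates, which, as you correctly note elsewhere, is all the construction of ${\cal C}_l$ and the modular commutator bounds require.
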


In the proof we gave a sufficient condition to condition $\rm (AOC)^+$, which was formulated for general von Neumann algebras $\cite[\textrm{Lemma 3.2.3}]{Is12_2}$. 
When we see a quantum group von Neumann algebra, we have a more concrete sufficient condition as follows. To verify this, see Subsection 3.2 in the same paper. 
In the lemma below, $\pi$ means the canonical $*$-homomorphism from $\mathbb{B}(L^2(\mathbb{G}))$ into $\mathbb{B}(L^2(\mathbb{G})\otimes L^2(\mathbb{R}))$ defined by $(\pi(x)\xi)(t):=\Delta_h^{-it}x\Delta_h^{it}\xi(t)$ for $x\in\mathbb{B}(L^2(\mathbb{G}))$, $t\in\mathbb{R}$, and $\xi\in L^2(\mathbb{G})\otimes L^2(\mathbb{R})$. 

\begin{Lem}\label{AOC}
Let $\mathbb{G}$ be a compact quantum group and ${\cal C}_l\subset C^*\{C_{\rm red}(\mathbb{G}), \hat{\lambda}(\ell^\infty(\hat{\mathbb{G}})) \}$ a $C^*$-subalgebra which contains $C_{\rm red}(\mathbb{G})$ and $\mathbb{K}(L^2(\mathbb{G}))$. Put ${\cal C}_r:=U{\cal C}_lU$. Assume the following conditions:
\begin{itemize}
	\item[$\rm (i)$] the algebra ${\cal C}_l$ is nuclear;
	\item[$\rm (ii)$] a family of maps $\mathrm{Ad}\Delta_h^{it}$ $(t\in\mathbb{R})$ gives a norm continuous action of $\mathbb{R}$ on ${\cal C}_l$;
	\item[$\rm (iii)$] all commutators of $\pi({\cal C}_l)$ and ${\cal C}_r \otimes1$ are contained in $\mathbb{K}(L^2(\mathbb{G}))\otimes\mathbb{B}(L^2(\mathbb{R}))$.
\end{itemize}
Then $L^\infty(\mathbb{G})$ and its Haar state satisfy condition $\rm (AOC)^+$ with the dense $C^*$-algebra $C_{\rm red}(\mathbb{G})$.
\end{Lem}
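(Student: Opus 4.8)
The plan is to reduce Lemma \ref{AOC} to the general sufficient condition for $\rm (AOC)^+$ from \cite[Lemma 3.2.3]{Is12_2}, so essentially everything is about producing the right ingredients from the three hypotheses. First I would note that hypothesis $\rm (ii)$ lets us form the reduced crossed product $C_{\rm red}(\mathbb{G})\rtimes_{\rm r}\mathbb{R}\subset {\cal C}_l\rtimes_{\rm r}\mathbb{R}$ inside $\mathbb{B}(L^2(\mathbb{G})\otimes L^2(\mathbb{R}))$, using the covariant pair $(\pi,1\otimes\lambda_{\mathbb{R}})$ where $\lambda_{\mathbb{R}}$ is the left regular representation of $\mathbb{R}$; concretely the modular action $\mathrm{Ad}\,\Delta_h^{it}$ is implemented spatially by $\pi$. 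Since ${\cal C}_l$ is nuclear by $\rm (i)$ and $\mathbb{R}$ is amenable, ${\cal C}_l\rtimes_{\rm r}\mathbb{R}$ is nuclear, in particular locally reflexive; similarly $C_{\rm red}(\mathbb{G})\rtimes_{\rm r}\mathbb{R}$ is nuclear, so the local reflexivity requirement in the definition of $\rm (AOC)^+$ is immediate.

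Next I would build the u.c.p.\ map $\theta$. The ``opposite'' copy $C_{\rm red}(\mathbb{G})^{\rm op}$ acts as $UC_{\rm red}(\mathbb{G})U={\cal C}_r$ (up to the flip), and on the core one uses that the modular action commutes appropriately with $U$, so that ${\cal C}_r\otimes 1$, together with the same $\mathbb{R}$-part, gives a representation of $(C_{\rm red}(\mathbb{G})\rtimes_{\rm r}\mathbb{R})^{\rm op}$ on $L^2(\mathbb{G})\otimes L^2(\mathbb{R})$. Hypothesis $\rm (iii)$ says all commutators of $\pi({\cal C}_l)$ and ${\cal C}_r\otimes 1$ lie in $\mathbb{K}(L^2(\mathbb{G}))\otimes\mathbb{B}(L^2(\mathbb{R}))$; combined with the fact that $1\otimes\lambda_{\mathbb{R}}$ commutes with both ${\cal C}_r\otimes 1$ and the $\mathbb{R}$-part of ${\cal C}_l\rtimes_{\rm r}\mathbb{R}$ exactly, one gets that all commutators of the image of ${\cal C}_l\rtimes_{\rm r}\mathbb{R}$ and the image of ${\cal C}_r\otimes 1$ (now including the $\mathbb{R}$-directions) land in $\mathbb{K}(L^2(\mathbb{G}))\otimes\mathbb{B}(L^2(\mathbb{R}))$. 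Therefore the multiplication map descends to a $*$-homomorphism
\[
\nu\colon ({\cal C}_l\rtimes_{\rm r}\mathbb{R})\otimes ({\cal C}_r\otimes 1) \longrightarrow \frac{\mathbb{B}(L^2(\mathbb{G})\otimes L^2(\mathbb{R}))}{\mathbb{K}(L^2(\mathbb{G}))\otimes\mathbb{B}(L^2(\mathbb{R}))},
\]
and since the source is nuclear (both factors are), $\nu$ is nuclear, hence by the Choi--Effros lifting theorem \cite{CE76} it lifts to a u.c.p.\ map into $\mathbb{B}(L^2(\mathbb{G})\otimes L^2(\mathbb{R}))$. Restricting this lift along $C_{\rm red}(\mathbb{G})\rtimes_{\rm r}\mathbb{R}\hookrightarrow {\cal C}_l\rtimes_{\rm r}\mathbb{R}$ on one side and $C_{\rm red}(\mathbb{G})^{\rm op}\hookrightarrow {\cal C}_r$ on the other gives the desired $\theta$ with $\theta(a\otimes b^{\rm op})-ab^{\rm op}\in\mathbb{K}(L^2(\mathbb{G}))\otimes\mathbb{B}(L^2(\mathbb{R}))$, which is exactly condition $\rm (AOC)^+$ with dense algebra $C_{\rm red}(\mathbb{G})$.

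The main obstacle I anticipate is bookkeeping around the $L^2(\mathbb{R})$-factor and the opposite algebra: one must be careful that the ideal appearing in $\rm (AOC)^+$ is $\mathbb{K}(L^2(M))\otimes\mathbb{B}(L^2(\mathbb{R}))$ rather than $\mathbb{K}(L^2(M)\otimes L^2(\mathbb{R}))$, and verify that conjugation by $U$ and by $1\otimes\lambda_{\mathbb{R}}$ interacts correctly with $\pi$ so that ${\cal C}_r\otimes 1$ really does generate a copy of $(C_{\rm red}(\mathbb{G})\rtimes_{\rm r}\mathbb{R})^{\rm op}$ on the same Hilbert space and the relevant commutators stay in the compact ideal after smearing over $\mathbb{R}$. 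All of this is carried out in detail in Subsection 3.2 of \cite{Is12_2}; the point here is simply that hypotheses $\rm (i)$--$\rm (iii)$ are precisely what is needed to feed into \cite[Lemma 3.2.3]{Is12_2}, so once the identifications above are set up the conclusion is a direct application of that lemma.
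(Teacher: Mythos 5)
Your proposal is correct and follows essentially the same route as the paper, which likewise offers no independent argument for this lemma but simply refers the reader to Lemma~3.2.3 and Subsection~3.2 of \cite{Is12_2}, the point being exactly as you say: hypotheses $\rm (i)$--$\rm (iii)$ are what that lemma needs as input. The only place your sketch is loose --- the right-hand tensor factor of $\theta$ must be all of $(C_{\rm red}(\mathbb{G})\rtimes_{\rm r}\mathbb{R})^{\rm op}$ including the $\rho(\mathbb{R})$-directions, not merely $C_{\rm red}(\mathbb{G})^{\rm op}$, and one must check that $\pi({\cal C}_l)$ and ${\cal C}_r\otimes 1$ multiply the ideal $\mathbb{K}(L^2(\mathbb{G}))\otimes\mathbb{B}(L^2(\mathbb{R}))$ into itself --- is precisely the bookkeeping you (and the paper) delegate to \cite{Is12_2}, so nothing further is required here.
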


\begin{Rem}\upshape\label{amenable}
When $\hat{\mathbb{G}}$ is amenable, then $L^\infty(\mathbb{G})$ and its Haar state satisfy condition $\rm (AOC)^+$. In fact, we can choose ${\cal B}:=c_0(\hat{\mathbb{G}})+\mathbb{C}1$ and ${\cal C}_l:=C^*\{C_{\rm red}(\mathbb{G}), \hat{\lambda}({\cal B})\}$. 
In this case, all conditions in this lemma are easily verified.
\end{Rem}

\begin{Rem}\upshape\label{group}
When $\hat{\mathbb{G}}$ is a strongly bi-exact discrete quantum group of Kac type (possibly bi-exact discrete group) with ${\cal B}\subset \ell^\infty(\hat{\mathbb{G}})$, then since the modular operator is trivial, ${\cal C}_l:=C^*\{C_{\rm red}(\mathbb{G}), \lambda({\cal B})\}$ satisfies these conditions.
\end{Rem}

\begin{Rem}\upshape
In the proof of $\cite[\textrm{Proposition 3.2.4}]{Is12_2}$, we put ${\cal C}_l=\hat{\mathbb{G}}\ltimes_{\rm r} {\cal B}_\infty$ (here we write it as $\tilde{\cal C}_l$), and in this subsection we are putting ${\cal C}_l=C^*\{C_{\rm red}(\mathbb{G}), \hat{\lambda}({\cal B})\}$. 
Both of them are nuclear $C^*$-algebras containing $C_{\rm red}(\mathbb{G})$ and do the same work to get condition $\rm (AOC)^+$.
The difference of them is that ${\cal C}_l$ is contained in $\mathbb{B}(L^2(\mathbb{G}))$ but $\tilde{\cal C}_l$ is not. 
Hence ${\cal C}_l$ is more useful and $\tilde{\cal C}_l$ is more general (since $\tilde{\cal C}_l$ produces ${\cal C}_l$). In the previous paper, we preferred the generality and hence used $\tilde{\cal C}_l$ in the proof.
\end{Rem}

\subsection{\bf Free products of bi-exact quantum groups}

Free products of bi-exact discrete groups (more generally, free products of von Neumann algebras with condition (AO)) were studied in $\cite{Oz04}\cite[\textrm{Section 4}]{GJ07}\cite[\textrm{Section 15.3}]{BO08}$. 
In this subsection we will prove similar results on discrete quantum groups. We basically follow the strategy in $\cite{Oz04}$.

\begin{Lem}[{\cite[\rm Lemma\ 2.4]{Oz04}}]\label{nuclear}
Let $B_i\subset \mathbb{B}(H_i)$ $(i=1,2)$ be $C^*$-subalgebras with $B_i$-cyclic vectors $\xi_i$ and denote by $\omega_i$ the corresponding vector states (note that each $\omega_i$ is non-degenerate). If each $B_i$ contains $P_i$, the orthogonal projection onto $\mathbb{C}\xi_i$, and is nuclear, then the free product $(B_1,\omega_1)*(B_2,\omega_2)$ is also nuclear.
\end{Lem}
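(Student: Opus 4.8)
The plan is to realize the free product $C^*$-algebra concretely on the free product Hilbert space $H = H_1 * H_2$ and exhibit it as an extension of a nuclear quotient by the compacts, then invoke the standard fact that an extension of a nuclear $C^*$-algebra by a nuclear $C^*$-algebra is nuclear (see \cite[Theorem 10.1.4]{BO08}). First I would fix the notation exactly as in the preliminaries: write $\lambda_i\colon B_i \to \mathbb{B}(H)$ for the canonical inclusions $\lambda_i(a) = W_i^*(a\otimes 1)W_i$, let $A := C^*\{\lambda_1(B_1),\lambda_2(B_2)\}$ be the free product, and let $P_\Omega$ denote the rank-one projection onto $\mathbb{C}\Omega$. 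The key point is that since $P_i \in B_i$, the operator $\lambda_i(P_i)$ is the projection onto $H(i)$ (the "legs not beginning with $i$") viewed inside $H = H(i)\oplus(H_i^0\otimes H(i))$ — more precisely $\lambda_i(P_i) = W_i^*(P_i\otimes 1)W_i$ projects onto $\mathbb{C}\xi_i\otimes H(i)\simeq H(i)$. A short computation shows $\lambda_1(P_1)\lambda_2(P_2) = P_\Omega$, because a reduced word lying in both $H(1)$ and $H(2)$ must be empty. Hence $P_\Omega \in A$, and therefore $\mathbb{K}(H) \subset A$: the rank-one operators $\lambda_{i_1}(a_1)\cdots\lambda_{i_k}(a_k)\,P_\Omega\,\lambda_{j_\ell}(b_\ell)^*\cdots\lambda_{j_1}(b_1)^*$ with $a_m, b_m \in B_{i_m}^0$ (kernels of the states) are total in $\mathbb{K}(H)$, using that each $B_i^0 \xi_i$ is dense in $H_i^0$ and $\xi_i$ is $B_i$-cyclic.

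Next I would analyze the quotient $A/\mathbb{K}(H)$. Denote by $\mathcal{Q}\colon \mathbb{B}(H)\to \mathbb{B}(H)/\mathbb{K}(H)$ the Calkin projection. The claim is that modulo $\mathbb{K}(H)$ the two algebras $\mathcal{Q}(\lambda_1(B_1))$ and $\mathcal{Q}(\lambda_2(B_2))$ commute and hence $A/\mathbb{K}(H)$ is a quotient of $\mathcal{Q}(\lambda_1(B_1))\otimes\mathcal{Q}(\lambda_2(B_2))$ — but this is the \emph{wrong} structure for a free product and will fail in general, so I should be more careful: the correct statement is that $A/\mathbb{K}(H)$ is a quotient of the \emph{full} free product $B_1 * B_2$ (amalgamated over $\mathbb{C}$), which is nuclear whenever $B_1, B_2$ are nuclear. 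Indeed the universal property of the full free product gives a surjective $*$-homomorphism $B_1 *_{\mathbb{C}} B_2 \to A$ sending the $i$-th copy onto $\lambda_i(B_i)$; composing with $\mathcal{Q}$ yields a surjection $B_1 *_{\mathbb{C}} B_2 \to A/\mathbb{K}(H)$. By Boca's theorem \cite{Bo91} (or \cite[Theorem 3.2]{Bo91}; see also \cite{Ju05}), the full free product of nuclear unital $C^*$-algebras with GNS-faithful states admits conditional expectations onto each factor, hence is nuclear — actually the cleaner input is that $A/\mathbb{K}(H)$ is a quotient of a nuclear algebra, and quotients of nuclear algebras are nuclear.

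So the skeleton is: (1) show $P_\Omega \in A$ via $\lambda_1(P_1)\lambda_2(P_2) = P_\Omega$; (2) deduce $\mathbb{K}(H)\subset A$; (3) build the surjection from a nuclear free product (or nuclear full free product) onto $A/\mathbb{K}(H)$, concluding $A/\mathbb{K}(H)$ is nuclear; (4) conclude $A$ is nuclear as an extension $0\to\mathbb{K}(H)\to A\to A/\mathbb{K}(H)\to 0$ with both ends nuclear. Step (3) is where I expect the only genuine subtlety: one must either invoke Boca's nuclearity result for full free products of nuclear $C^*$-algebras with faithful states, or argue directly that $A/\mathbb{K}(H)$ decomposes appropriately — and one has to be honest that the reduced free product is \emph{not} nuclear in general, so the nuclearity of $A$ here rests on the presence of $\mathbb{K}(H)$ inside $A$ (forced by the $P_i$) collapsing the "free" part of the structure in the Calkin quotient. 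The remaining steps are routine: the identification of $\lambda_i(P_i)$ with the projection onto $H(i)$ is a direct unwinding of the definition of $W_i$, the density of the rank-one operators is the standard argument for $C^*$-algebras acting irreducibly-enough to contain all compacts, and the extension argument is textbook.

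Concretely, assuming Lemma \ref{nuclear} for two factors as stated, the $n$-fold case (used elsewhere in the paper) follows by an immediate induction: associativity of the free product construction gives $(B_1,\omega_1)*\cdots*(B_n,\omega_n) = \bigl((B_1,\omega_1)*\cdots*(B_{n-1},\omega_{n-1})\bigr)*(B_n,\omega_n)$, the inductive hypothesis makes the first factor nuclear with its free-product vector state and a cyclic vector, and that vector state's rank-one projection lies in the algebra by step (1) applied at the previous stage, so the two-factor lemma applies again. Thus the only real content is the two-factor statement above, and within it, step (3).
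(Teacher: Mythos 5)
Your steps (1), (2) and (4) are correct and coincide with what the paper itself records in the remark following the lemma: $\lambda_i(P_i)$ is the projection onto $H(i)$, hence $P_\Omega=\lambda_1(P_1)\lambda_2(P_2)=\lambda_1(P_1)+\lambda_2(P_2)-1$ lies in $A:=(B_1,\omega_1)*(B_2,\omega_2)$, and cyclicity of $\Omega$ gives $\mathbb{K}(H)\subset A$. (Note that the paper does not prove the lemma; it quotes it from Ozawa's Kurosh-type paper and supplies only this remark.) The genuine gap is your step (3), which is the entire content of the statement. The algebra you propose to surject onto $A/\mathbb{K}(H)$, the full unital free product $B_1*_{\mathbb{C}}B_2$, is \emph{not} nuclear when the $B_i$ are: already $C(\mathbb{T})*_{\mathbb{C}}C(\mathbb{T})\cong C^*(\mathbb{F}_2)$ surjects onto $C^*_\lambda(\mathbb{F}_2)$, which is non-nuclear since $\mathbb{F}_2$ is non-amenable, and nuclearity passes to quotients. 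Boca's theorem produces states and u.c.p.\ maps on full free products; it gives no approximation property, so ``admits conditional expectations onto each factor, hence is nuclear'' is a non sequitur, and exhibiting $A/\mathbb{K}(H)$ as a quotient of a non-nuclear algebra yields nothing.

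The heuristic that $\mathbb{K}(H)\subset A$ ``collapses the free part in the Calkin quotient'' is also false, so the gap cannot be patched along these lines. Since $\mathbb{K}(H_i)\subset B_i$, take $b=\eta_1\langle\,\cdot\,,\xi_1\rangle\in B_1$ and $c=\eta_2\langle\,\cdot\,,\xi_2\rangle\in B_2$ with $\eta_i\in H_i^0$. Then $\lambda_1(b)$ and $\lambda_2(c)$ act as the creation operators $w\mapsto\eta_1\otimes w$ on $H(1)$ (killing $H_1^0\otimes H(1)$) and $w\mapsto\eta_2\otimes w$ on $H(2)$, and the commutator $[\lambda_1(b),\lambda_2(c)]$ restricted to the infinite-dimensional subspace $H(2)\ominus\mathbb{C}\Omega$ is a nonzero multiple of an isometry, hence non-compact. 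So $A/\mathbb{K}(H)$ retains a genuinely free, Toeplitz--Cuntz type structure (in the extreme case $B_i=\mathbb{K}(H_i)+\mathbb{C}1$ the algebra $A$ is essentially a Toeplitz--Pimsner algebra, whose nuclearity is of Cuntz-algebra type rather than of tensor-product or full-free-product type). A correct argument must therefore use the hypothesis $P_i\in B_i$ well beyond the single consequence $\mathbb{K}(H)\subset A$; as it stands your proposal establishes the easy inclusions but not the nuclearity, which is the point of the lemma. Your reduction of the $n$-fold case to the two-fold case via associativity is fine.
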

\begin{Rem}\upshape
In this Lemma, each $B_i$ contains $\mathbb{K}(H_i)$ since it contains $P_i$ and the vector $\xi_i$ is $B_i$-cyclic. 
Projections $\lambda_i(P_i)\in(B_1,\omega_1)*(B_2,\omega_2)$ are orthogonal projections onto $H(i)$ and hence the orthogonal projection onto $\mathbb{C}\Omega$ is of the form $\lambda_1(P_1)+\lambda_2(P_2)-1$, which is contained in $(B_1,\omega_1)*(B_2,\omega_2)$. 
Since the vector $\Omega$ is cyclic for $(B_1,\omega_1)*(B_2,\omega_2)$, $(B_1,\omega_1)*(B_2,\omega_2)$ contains all compact operators.
\end{Rem}

For free product von Neumann algebras, we use the same notation  $W_i$, $V_i$, $\Delta$, $\Delta_i$, $J$ and $J_i$ as in the free product part of Subsection \ref{CQG}.

\begin{Lem}[{\cite[\rm Lemma\ 3.1]{Oz04}}]
For a free product von Neumann algebra $(M_1,\phi_1)*\cdots*(M_n,\phi_n)$, we have the following equations:
\begin{eqnarray*}
\lambda_i(a) = J\rho_i(a)J
&=&JV_i^*(1_{JH(i)}\otimes J_iaJ_i)V_iJ \\
&=&V_i^*(P_\Omega\otimes a+\lambda_i(a)\mid_{JH(i)\ominus \mathbb{C}\Omega}\otimes 1_{H_i})V_i\\
&=&V_j^*(\lambda_i(a)\mid_{JH(j)}\otimes 1_{H_j})V_j,
\end{eqnarray*}
for any $a\in \mathbb{B}(H_i)$ and $i\neq j$, where $P_\Omega$ is the orthogonal projection onto $\mathbb{C}\Omega$. 
\end{Lem}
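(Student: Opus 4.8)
The plan is to dispose of the two easy equalities by unwinding definitions and then to read off the action of $\lambda_i(a)$ in the tensor decomposition of $H$ supplied by $V_j$. The first equality holds because $J$ is an involution, and the second is simply the definition $\rho_i(a)=V_i^{*}(1\otimes J_iaJ_i)V_i$ (the unitary $V_i$ was introduced precisely so that the right $\mathbb{B}(H_i)$-action takes this form). For the remaining two I would keep in mind the guiding picture: $\lambda_i(a)=W_i^{*}(a\otimes 1)W_i$ acts only on the first letter of a reduced word — prepending an $H_i$-letter when the word does not already begin with one — whereas $V_i$ realises $H\cong JH(i)\otimes H_i$ by splitting off the last letter. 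For $j\neq i$ these two operations touch disjoint positions, and the only obstruction to a clean commutation comes from the vacuum and the length-one words, where ``first letter'' and ``last letter'' collide.

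For the equality with $j\neq i$, first record that $\rho_j(\mathbb{B}(H_j))=V_j^{*}(1\otimes\mathbb{B}(H_j))V_j$ and that $\lambda_i(\mathbb{B}(H_i))$ commutes with $\rho_j(\mathbb{B}(H_j))$ — a direct computation on reduced words, where $\lambda_i(a)$ acts on the first letter, $\rho_j(b)$ on the last, and one only has to match coefficients in the vacuum and length-one cases. Hence $V_j\lambda_i(a)V_j^{*}$ lies in $(1\otimes\mathbb{B}(H_j))'=\mathbb{B}(JH(j))\otimes 1$, say $V_j\lambda_i(a)V_j^{*}=T\otimes 1$. To identify $T$, a short computation from the explicit form of $V_j$ gives $V_j^{*}(\eta\otimes\xi_j)=\eta$ for $\eta\in JH(j)$, and $\lambda_i(a)$ clearly leaves $JH(j)$ invariant (altering the first letter cannot make a word end in an $H_j$-letter); evaluating $T\otimes 1$ on such $\eta\otimes\xi_j$ then yields $T\eta=\lambda_i(a)\eta$, i.e.\ $T=\lambda_i(a)|_{JH(j)}$.

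For the equality with the same index $i$, one runs the same argument after excising the ``pure $H_i$'' subspace $V_i^{*}(\mathbb{C}\Omega\otimes H_i)$, which a short computation identifies with $\mathbb{C}\Omega\oplus H_i^{0}$. One checks that $\lambda_i(a)$ leaves both $\mathbb{C}\Omega\oplus H_i^{0}$ and its orthogonal complement invariant, and that on $\mathbb{C}\Omega\oplus H_i^{0}$, under the identification $\Omega\leftrightarrow\xi_i$, it restricts to $a$ itself; transporting this block back through $V_i$ contributes exactly $V_i^{*}(P_\Omega\otimes a)V_i$. On the complement $V_i^{*}((JH(i)\ominus\mathbb{C}\Omega)\otimes H_i)$ the commutator $[\lambda_i(a),\rho_i(b)]$ now vanishes — the obstruction having been confined to the pure subspace — so the previous paragraph applies verbatim and produces $V_i^{*}(\lambda_i(a)|_{JH(i)\ominus\mathbb{C}\Omega}\otimes 1)V_i$. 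Adding the two blocks gives the asserted formula.

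I expect the only genuine difficulty to be organisational: one must carry the two conjugations $J$ and $J_i$ built into $V_i$ through the successive identifications $H\cong H_i\otimes H(i)$, and must handle with care the vacuum and length-one reduced words, where the first- and last-letter slots coincide. This collision is precisely what forces the correction term $P_\Omega\otimes a$ in the equality with the same index $i$, and is why for $j\neq i$ one obtains an exact tensor splitting with no correction term.
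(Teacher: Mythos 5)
The paper does not actually prove this lemma: it is imported verbatim from \cite[Lemma 3.1]{Oz04} with no argument supplied, so there is no in-paper proof to compare against. Your argument is correct and is the natural one. The two definitional equalities are fine; your identification $V_j^*(\eta\otimes\xi_j)=\eta$ for $\eta\in JH(j)$ checks out (using $J_j\xi_j=\xi_j$ and $J^2=1$), and the commutant trick $V_j\lambda_i(a)V_j^*\in(1\otimes\mathbb{B}(H_j))'=\mathbb{B}(JH(j))\otimes 1$ correctly reduces the $j\neq i$ case to evaluating on vectors $\eta\otimes\xi_j$. One point to make explicit in the $j=i$ case: you must verify \emph{directly}, by the same reduced-word computation you invoke for $j\neq i$, that $\lambda_i(a)$ and $\rho_i(b)$ both preserve the splitting $H=(\mathbb{C}\Omega\oplus H_i^0)\oplus(\mathbb{C}\Omega\oplus H_i^0)^\perp$ and commute on the second summand; the phrase ``the obstruction having been confined to the pure subspace'' cannot be justified by appealing to the commutator formula $[\lambda_i(a),J\lambda_i(b)J]=V_i^*(P_\Omega\otimes[a,J_ibJ_i])V_i$ stated in the remark that follows the lemma, since that formula is itself derived from the lemma. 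The verification does go through (the only words on which the first- and last-letter slots interact after shortening are $\Omega$ and the single $H_i^0$-letters, which lie in the excised block, and $\langle\lambda_i(a)\eta,\Omega\rangle=\langle\eta,\lambda_i(a^*)\Omega\rangle=0$ for $\eta\perp\mathbb{C}\Omega\oplus H_i^0$ shows the block is reducing), so this is a gap in exposition rather than in substance.
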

\begin{Rem}\upshape\label{commutator}
Simple calculations with the lemma show that $[\lambda_i(\mathbb{B}(H_i)),J\lambda_j(\mathbb{B}(H_j))J]=0$ when $i\neq j$, and that
\begin{equation*}
[\lambda_i(a), J\lambda_i(b)J]= V_i^* (P_\Omega \otimes [a, J_ibJ_i])V_i
\end{equation*}
for $a,b\in \mathbb{B}(H_i)$. 
Since $V_i=\Sigma(J_i\otimes J|_{JH(i)})W_iJ$, where $\Sigma$ is the flip, this equation means 
\begin{eqnarray*}
[\lambda_i(a), J\lambda_i(b)J]
&=& V_i^* (P_\Omega \otimes [a, J_ibJ_i])V_i \\
&=&J^* W_i^* (J_i\otimes J|_{JH(i)})^*\Sigma^* (P_\Omega \otimes [a, J_ibJ_i]) \Sigma(J_i\otimes J|_{JH(i)})W_iJ \\
&=& J^* W_i^*(J_i[a, J_ibJ_i]J_i\otimes P_\Omega) W_iJ.
\end{eqnarray*}
Hence we get
\begin{equation*}
[\lambda_i(a), J\lambda_i(b)J]
=W_i^*([a, J_ibJ_i]\otimes P_\Omega) W_i \quad (a,b\in \mathbb{B}(H_i)).
\end{equation*}
This means the operator $[\lambda_i(a), J\lambda_i(b)J]$ is, as an operator on $H_1*\cdots *H_n$, $[a, J_ibJ_i]$ on $\mathbb{C}\Omega\oplus H_i^0(=H_i)$ and 0 otherwise.
\end{Rem}

\begin{Pro}\label{free prod bi-exact}
Let $\mathbb{G}_i$ $(i=1,\ldots,n)$ be compact quantum groups. If each $\hat{\mathbb{G}}_i$ satisfies the intermediate condition in \textrm{Lemma $\ref{intermediate}$} with ${\cal C}_l^i$, then the free product $\hat{\mathbb{G}}_1*\cdots*\hat{\mathbb{G}}_n$ satisfies the same condition with the nuclear $C^*$-algebra $({\cal C}_l^1,h_1)*\cdots* ({\cal C}_l^n,h_n)$, where $h_i$ are the vector states of $\hat{1}_{\mathbb{G}_i}$. 
In particular, $\hat{\mathbb{G}}_1*\cdots *\hat{\mathbb{G}}_n$ is bi-exact if each $\hat{\mathbb{G}}_i$ is strongly bi-exact.
\end{Pro}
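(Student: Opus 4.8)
The plan is to realise $\hat{\mathbb{G}}_1*\cdots*\hat{\mathbb{G}}_n$ as the dual of the reduced free product quantum group, identify $L^2(\mathbb{G})$ with the free product Hilbert space $L^2(\mathbb{G}_1)*\cdots*L^2(\mathbb{G}_n)$ (with $\Omega\leftrightarrow\hat1$ and $\xi_i\leftrightarrow\hat1_{\mathbb{G}_i}$), and then verify the intermediate condition of Lemma \ref{intermediate} with the concrete $C^*$-algebra ${\cal C}_l:=({\cal C}_l^1,h_1)*\cdots*({\cal C}_l^n,h_n)\subset\mathbb{B}(L^2(\mathbb{G}))$, i.e.\ the $C^*$-algebra generated by the free product embeddings $\lambda_i({\cal C}_l^i)$, $i=1,\ldots,n$. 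There are four things to check: $(\mathrm a)$ ${\cal C}_l$ is nuclear; $(\mathrm b)$ $C_{\rm red}(\mathbb{G})\subset{\cal C}_l$; $(\mathrm c)$ $\mathbb{K}(L^2(\mathbb{G}))\subset{\cal C}_l$; $(\mathrm d)$ $[{\cal C}_l,{\cal C}_r]\subset\mathbb{K}(L^2(\mathbb{G}))$, where ${\cal C}_r:=U{\cal C}_lU$.

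Parts $(\mathrm a)$, $(\mathrm b)$, $(\mathrm c)$ are quick. Each ${\cal C}_l^i$ contains $\mathbb{K}(L^2(\mathbb{G}_i))$, hence the rank one projection $P_i$ onto $\mathbb{C}\hat1_{\mathbb{G}_i}$, and $\hat1_{\mathbb{G}_i}$ is ${\cal C}_l^i$-cyclic; so Lemma \ref{nuclear}, iterated over the factors, gives $(\mathrm a)$, and the remark following Lemma \ref{nuclear} gives $(\mathrm c)$. For $(\mathrm b)$, recall that $C_{\rm red}(\mathbb{G})=(C_{\rm red}(\mathbb{G}_1),h_1)*\cdots*(C_{\rm red}(\mathbb{G}_n),h_n)$ by the very definition of the free product quantum group; since each inclusion $C_{\rm red}(\mathbb{G}_i)\subset{\cal C}_l^i$ is $h_i$-preserving with faithful GNS representation, the reduced free product construction is compatible with it, so $C_{\rm red}(\mathbb{G})=C^*\{\lambda_i(C_{\rm red}(\mathbb{G}_i)):i\}\subset{\cal C}_l$.

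For $(\mathrm d)$: conjugation by the self-adjoint unitary $U=J\hat J=\hat JJ$ is a $*$-automorphism, so ${\cal C}_r$ is generated by the $U\lambda_i(x)U$; by the Leibniz rule and norm closedness of $\mathbb{K}(L^2(\mathbb{G}))$ it then suffices to prove $[\lambda_i(x),U\lambda_j(y)U]\in\mathbb{K}(L^2(\mathbb{G}))$ for all $i,j$ and $x\in{\cal C}_l^i$, $y\in{\cal C}_l^j$. The key computation is $U\lambda_j(y)U=J\lambda_j(\hat J_jy\hat J_j)J$: indeed the formula $W_j\hat J=(\hat J_j\otimes\hat J|_{H(j)})W_j$ yields $\hat J=W_j^*(\hat J_j\otimes\hat J|_{H(j)})W_j$ and hence $\hat J\lambda_j(y)\hat J=\lambda_j(\hat J_jy\hat J_j)$, so $U\lambda_j(y)U=J(\hat J\lambda_j(y)\hat J)J=J\lambda_j(\hat J_jy\hat J_j)J$. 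Feeding this into Remark \ref{commutator}, the commutator vanishes when $i\neq j$, while for $i=j$ it equals $W_j^*([x,U_jyU_j]\otimes P_\Omega)W_j$ (using $J_j\hat J_j=U_j=\hat J_jJ_j$), which is compact because $[x,U_jyU_j]\in\mathbb{K}(L^2(\mathbb{G}_j))$ by the within-factor hypothesis $[{\cal C}_l^j,U_j{\cal C}_l^jU_j]\subset\mathbb{K}(L^2(\mathbb{G}_j))$. This establishes the intermediate condition for the free product, and the final assertion follows at once: strong bi-exactness of each $\hat{\mathbb{G}}_i$ implies the intermediate condition for each $\hat{\mathbb{G}}_i$ by Lemma \ref{intermediate}, hence the intermediate condition for $\hat{\mathbb{G}}_1*\cdots*\hat{\mathbb{G}}_n$ by what we have just shown, hence its bi-exactness by Lemma \ref{intermediate} again.

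I expect the main obstacle to be organising the bookkeeping between the global modular and flip conjugations $J,\hat J,U$ on $L^2(\mathbb{G})$ and the local ones $J_j,\hat J_j,U_j$ on the factors, so as to pin down $U\lambda_j(y)U=J\lambda_j(\hat J_jy\hat J_j)J$ cleanly; this is the one genuinely quantum feature of the argument, as for discrete groups $U$ collapses to the familiar flip-to-inverse unitary and the step is transparent. A secondary point that deserves care, though essentially standard, is the claim in $(\mathrm b)$ that the reduced free product $({\cal C}_l^1,h_1)*\cdots*({\cal C}_l^n,h_n)$, realised on $L^2(\mathbb{G})$, really contains $C_{\rm red}(\mathbb{G})$ — the reduced free product of the $C_{\rm red}(\mathbb{G}_i)$ — as a $C^*$-subalgebra rather than merely as a quotient.
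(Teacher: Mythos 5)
Your proposal is correct and follows essentially the same route as the paper: take the reduced free product $({\cal C}_l^1,h_1)*\cdots*({\cal C}_l^n,h_n)$, get nuclearity and the compacts from Lemma \ref{nuclear} and its remark, and reduce the commutator condition to the single-factor hypothesis via Remark \ref{commutator}. The paper leaves the identity $U\lambda_j(y)U=J\lambda_j(\hat J_j y\hat J_j)J$ and the inclusion $C_{\rm red}(\mathbb{G})\subset{\cal C}_l$ implicit, whereas you spell them out; both verifications are accurate.
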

\begin{proof}
%
We may assume $n=2$. Write $H:=L^2(\mathbb{G}_1)*L^2(\mathbb{G}_2)$. By Lemma \ref{nuclear} and the following remark, ${\cal C}_l^1* {\cal C}_l^2$ is nuclear and contains $\mathbb{K}(H)$.
So what to show is that commutators of ${\cal C}_l^1* {\cal C}_l^2$ and $U({\cal C}_l^1* {\cal C}_l^2)U$ are contained in $\mathbb{K}(H)$. 
We have only to check that $[\lambda_i({\cal C}_l^i),U\lambda_j({\cal C}_l^j)U]$ $(i,j=1,2)$ are contained in $\mathbb{K}(H)$, since $\mathbb{K}(H)$ is an ideal. 
This is easily verified by Remark \ref{commutator}.
\end{proof}

\begin{Pro}\label{free prod AOC^+}
Let $\mathbb{G}_i$ $(i=1,\ldots,n)$ be compact quantum groups. If each $\hat{\mathbb{G}}_i$ satisfies conditions in \textrm{Lemma $\ref{AOC}$} with ${\cal C}_l^i$, then the free product $\hat{\mathbb{G}}_1*\cdots*\hat{\mathbb{G}}_n$ satisfies the same condition with the nuclear $C^*$-algebra $({\cal C}_l^1,h_1)*\cdots* ({\cal C}_l^n,h_n)$, where $h_i$ are the vector states of $\hat{1}_{\mathbb{G}_i}$. 
\end{Pro}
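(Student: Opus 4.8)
The plan is to show that ${\cal C}_l:=({\cal C}_l^1,h_1)*\cdots*({\cal C}_l^n,h_n)$, realised on $H:=L^2(\mathbb{G}_1)*\cdots*L^2(\mathbb{G}_n)$ (the GNS space of the free product Haar state) as the $C^*$-algebra generated by the canonical images $\lambda_i({\cal C}_l^i)$, satisfies the three conditions of Lemma~\ref{AOC} for the free product $\mathbb{G}:=\mathbb{G}_1*\cdots*\mathbb{G}_n$; as in the proof of Proposition~\ref{free prod bi-exact} we may take $n=2$. The ``ambient'' requirements of Lemma~\ref{AOC} are immediate: each ${\cal C}_l^i$ contains the rank-one projection $P_i$ onto $\mathbb{C}\hat{1}_{\mathbb{G}_i}$ (since ${\cal C}_l^i\supset\mathbb{K}(L^2(\mathbb{G}_i))$), so Lemma~\ref{nuclear} and the remark after it give both nuclearity of ${\cal C}_l$ (condition~(i)) and ${\cal C}_l\supset\mathbb{K}(H)$; the inclusions $C_{\rm red}(\mathbb{G})\subset{\cal C}_l\subset C^*\{C_{\rm red}(\mathbb{G}),\hat{\lambda}(\ell^\infty(\hat{\mathbb{G}}))\}$ follow from the free product construction of $\hat{\mathbb{G}}_1*\hat{\mathbb{G}}_2$ (Subsection~\ref{CQG}) together with the corresponding inclusions for each factor. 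So the real content is in conditions (ii) and (iii), and both rest on a single observation: the modular flow of the Haar state localises on the free factors.

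Indeed, from the explicit description of $\Delta^{it}$ on a free product Hilbert space (it acts diagonally, letter by letter) one obtains, exactly parallel to the identity $W_i\hat{J}=(\hat{J}_i\otimes\hat{J}|_{H(i)})W_i$, the identity $W_i\Delta^{it}=(\Delta_i^{it}\otimes\Delta^{it}|_{H(i)})W_i$, and hence
\[
\mathrm{Ad}\,\Delta^{it}(\lambda_i(a))=\lambda_i\big(\mathrm{Ad}\,\Delta_i^{it}(a)\big)\qquad(a\in\mathbb{B}(H_i),\ t\in\mathbb{R}).
\]
Since $t\mapsto\mathrm{Ad}\,\Delta_i^{it}$ is a norm-continuous automorphic $\mathbb{R}$-action on ${\cal C}_l^i$ by hypothesis (condition~(ii) of Lemma~\ref{AOC} for $\mathbb{G}_i$), this identity shows that $\mathrm{Ad}\,\Delta^{it}$ maps each $\lambda_i({\cal C}_l^i)$ onto itself and is norm-continuous there; as the elements on which $\mathrm{Ad}\,\Delta^{it}$ is norm-continuous form a $C^*$-algebra, $\mathrm{Ad}\,\Delta^{it}$ restricts to a norm-continuous $\mathbb{R}$-action on ${\cal C}_l=C^*\{\lambda_1({\cal C}_l^1),\lambda_2({\cal C}_l^2)\}$. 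This is condition~(ii); in particular $\pi$ is an injective $*$-homomorphism on ${\cal C}_l$.

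For condition~(iii) I would run the argument of Proposition~\ref{free prod bi-exact} fibrewise over $\mathbb{R}$. By a routine reduction to generators (cf.\ the proof of Lemma~\ref{AOC}) it is enough to show $[\pi(\lambda_i(a)),U\lambda_j(b)U\otimes1]\in\mathbb{K}(H)\otimes\mathbb{B}(L^2(\mathbb{R}))$ for $a\in{\cal C}_l^i$, $b\in{\cal C}_l^j$. Now $\pi(\lambda_i(a))$ is the decomposable operator on $H\otimes L^2(\mathbb{R})=L^2(\mathbb{R},H)$ with fibre $\mathrm{Ad}\,\Delta^{-it}(\lambda_i(a))=\lambda_i(\mathrm{Ad}\,\Delta_i^{-it}(a))$, while $U\lambda_j(b)U\otimes1$ is decomposable with constant fibre $U\lambda_j(b)U=J\lambda_j(\mathrm{Ad}\,\hat{J}_j(b))J$ (this last identity from $W_j\hat{J}=(\hat{J}_j\otimes\hat{J}|_{H(j)})W_j$ and $U=J\hat{J}=\hat{J}J$); so the commutator is decomposable with fibre $[\lambda_i(\mathrm{Ad}\,\Delta_i^{-it}(a)),J\lambda_j(\mathrm{Ad}\,\hat{J}_j(b))J]$ at $t$. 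By Remark~\ref{commutator} this fibre vanishes identically when $i\neq j$, and when $i=j$ it equals $W_i^*\big([\mathrm{Ad}\,\Delta_i^{-it}(a),\mathrm{Ad}\,U_i(b)]\otimes P_\Omega\big)W_i$ with $\mathrm{Ad}\,U_i(b)=J_i\hat{J}_i b\hat{J}_i J_i\in{\cal C}_r^i$. Recognising $t\mapsto[\mathrm{Ad}\,\Delta_i^{-it}(a),\mathrm{Ad}\,U_i(b)]$ as the fibre of $[\pi_i(a),\mathrm{Ad}\,U_i(b)\otimes1]$, condition~(iii) of Lemma~\ref{AOC} for $\mathbb{G}_i$ gives $[\pi_i(a),\mathrm{Ad}\,U_i(b)\otimes1]\in\mathbb{K}(L^2(\mathbb{G}_i))\otimes\mathbb{B}(L^2(\mathbb{R}))$; tensoring with the rank-one $P_\Omega$ on $H(i)$, reordering the tensor legs, and conjugating by the unitary $W_i\otimes1$ (which identifies $\mathbb{B}(H_i)\otimes\mathbb{B}(H(i))$ with $\mathbb{B}(H)$) puts the whole commutator into $\mathbb{K}(H)\otimes\mathbb{B}(L^2(\mathbb{R}))$. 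This gives condition~(iii) and finishes the proof.

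The only input not already present in Proposition~\ref{free prod bi-exact} is the localisation identity $W_i\Delta^{it}=(\Delta_i^{it}\otimes\Delta^{it}|_{H(i)})W_i$ and its consequence $\mathrm{Ad}\,\Delta^{it}(\lambda_i(a))=\lambda_i(\mathrm{Ad}\,\Delta_i^{it}(a))$; once this is in hand, the rest is simply the ``decomposable-operator over $\mathbb{R}$'' version of the bi-exactness computation, with Remark~\ref{commutator} applied fibrewise and the per-factor condition~(iii) substituted for the per-factor bi-exactness estimate. Accordingly I expect the main (and modest) obstacle to be bookkeeping: keeping the identifications $H\cong H_i\otimes H(i)$ and the position of the auxiliary $L^2(\mathbb{R})$-leg straight, so that the fibrewise computation really does reduce the free-product statement to the single-factor statements.
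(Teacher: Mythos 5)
Your proposal is correct and follows essentially the same route as the paper: reduce to $n=2$, get nuclearity and $\mathbb{K}(H)\subset{\cal C}_l$ from Lemma \ref{nuclear}, use the letter-by-letter form of $\Delta^{it}$ to localise the modular flow on each free factor (giving condition (ii)), and apply Remark \ref{commutator} fibrewise over $\mathbb{R}$ to reduce condition (iii) to the single-factor hypotheses. Your explicit identity $W_i\Delta^{it}=(\Delta_i^{it}\otimes\Delta^{it}|_{H(i)})W_i$ and the decomposable-operator bookkeeping are just a more detailed rendering of the steps the paper states briefly.
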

\begin{proof}
We may assume $n=2$ and write $H:=L^2(\mathbb{G}_1)*L^2(\mathbb{G}_2)$. 
By the same manner as in the last proposition, ${\cal C}_l^1* {\cal C}_l^2$ is nuclear and contains $\mathbb{K}(H)$. 
This algebra is contained in $C^*\{C_{\rm red}(\mathbb{G}_1)*C_{\rm red}(\mathbb{G}_2), \hat{\lambda}(\ell^\infty(\hat{\mathbb{G}}_1*\hat{\mathbb{G}}_2)) \}$. 
Norm continuity of the modular action is trivial since it is continuous on each $\lambda_k({\cal C}_l^k)$. 
By Remark \ref{commutator}, our commutators in the algebra $\mathbb{B}(H)$ (not the algebra $\mathbb{B}(H\otimes L^2(\mathbb{R}))$) are of the form 
\begin{equation*}
[\lambda_k(a), U\lambda_k(b)U]
=W_k^*([a, U_kbU_k]\otimes P_\Omega) W_k \quad (a,b\in {\cal C}^k_l)
\end{equation*}
(or $[\lambda_k(a), U\lambda_l(b)U]=0$ when $k\neq l$). Modular actions for $a$  is of the form
\begin{equation*}
[\Delta^{it}\lambda_k(a)\Delta^{-it}, U\lambda_k(b)U]
=[\lambda_k(\Delta_k^{it}a\Delta_k^{-it}), U\lambda_k(b)U]
=W_k^*([\Delta_k^{it}a\Delta_k^{-it}, U_kbU_k]\otimes P_\Omega) W_k,
\end{equation*}
where $\Delta_k$ is the modular operator for $\mathbb{G}_k$.
Hence when we see commutators of $\pi({\cal C}_l^k)$ and ${\cal C}_r^l\otimes 1$ in $\mathbb{B}(H\otimes L^2(\mathbb{R}))$, we can first assume $k=l$ since these commutators are zero when $k\neq l$. 
When we see these commutators for a fixed $k$ (and $k=l$), we actually work on $\mathbb{B}(L^2(\mathbb{G}_k)\otimes L^2(\mathbb{R}))$ with the modular action of $\mathbb{G}_k$, where we regard $L^2(\mathbb{G}_k)\simeq\mathbb{C}\Omega\oplus L^2(\mathbb{G}_k)^0\subset H$. 
Hence by the assumption on $\mathbb{G}_k$, we get 
\begin{equation*}
[\pi({\cal C}^k_l), {\cal C}^k_r\otimes 1]\subset \mathbb{K}(L^2(\mathbb{G}_k))\otimes \mathbb{B}(L^2(\mathbb{R})) \subset \mathbb{K}(L^2(\mathbb{G}))\otimes \mathbb{B}(L^2(\mathbb{R})).
\end{equation*}
Thus we get the condition on commutators.
\end{proof}

Now we can give new examples of bi-exact quantum groups and von Neumann algebras with condition $\rm (AOC)^+$.

\begin{Cor}\label{free prod AOC}
Let $\mathbb{G}_i$ $(i=1,\ldots,n)$ be compact quantum groups. Assume that each $\mathbb{G}_i$ is monoidally equivalent to $\mathrm{SU}_q(2)$, $\mathrm{SO}_q(3)$, or $A_u(F)$, where $-1<q<1$, $q\neq 0$, $F$ is not a scalar multiple of a $2$ by $2$ unitary.  Then the free product $\hat{\mathbb{G}}_1*\cdots *\hat{\mathbb{G}}_n$ is bi-exact. 
The associated von Neumann algebra $(L^\infty(\mathbb{G}_1),h_1)\bar{*}\cdots \bar{*}(L^\infty(\mathbb{G}_n),h_n)$ and its Haar state $h_1*\cdots *h_n$ satisfies condition $\rm (AOC)^+$ with the dense $C^*$-algebra $(C_{\rm red}(\mathbb{G}_1),h_1)*\cdots *(C_{\rm red}(\mathbb{G}_n),h_n)$.
\end{Cor}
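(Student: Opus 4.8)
The plan is to obtain the corollary by specializing the two propagation results already established. For each $i$, monoidal equivalence with $\mathrm{SU}_q(2)$, $\mathrm{SO}_q(3)$ or $A_u(F)$ gives, via Theorems \ref{example st bi-exact} and \ref{example bi-exact}, a single nuclear $C^*$-algebra ${\cal C}_l^i = C^*\{C_{\rm red}(\mathbb{G}_i),\hat\lambda({\cal B}^i)\}\subset\mathbb{B}(L^2(\mathbb{G}_i))$, built from the $C^*$-subalgebra ${\cal B}^i\subset\ell^\infty(\hat{\mathbb{G}}_i)$ furnished by strong bi-exactness, which simultaneously witnesses the intermediate condition of Lemma \ref{intermediate} for $\hat{\mathbb{G}}_i$ and satisfies hypotheses (i)--(iii) of Lemma \ref{AOC} for $(L^\infty(\mathbb{G}_i),h_i)$. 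That the two theorems can be run off one and the same algebra ${\cal C}_l^i$ is a bookkeeping remark about their proofs, which both reduce to the intertwiner-space estimates of \cite[Lemma 8.1]{VV05} for $\mathrm{SU}_q(2)$; see also Remarks \ref{amenable} and \ref{group} and \cite[Subsection 3.2]{Is12_2}.

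With this in hand, bi-exactness of $\hat{\mathbb{G}}_1*\cdots*\hat{\mathbb{G}}_n$ follows by applying Proposition \ref{free prod bi-exact} to the ${\cal C}_l^i$: the free product $({\cal C}_l^1,h_1)*\cdots*({\cal C}_l^n,h_n)$ is nuclear and witnesses the intermediate condition of Lemma \ref{intermediate} for the free product, hence the latter is bi-exact. Likewise, applying Proposition \ref{free prod AOC^+} to the ${\cal C}_l^i$ shows that $\hat{\mathbb{G}}_1*\cdots*\hat{\mathbb{G}}_n$ satisfies the hypotheses of Lemma \ref{AOC} with the nuclear $C^*$-algebra $({\cal C}_l^1,h_1)*\cdots*({\cal C}_l^n,h_n)$; Lemma \ref{AOC} then yields that $L^\infty$ of the free product quantum group, namely $(L^\infty(\mathbb{G}_1),h_1)\bar{*}\cdots\bar{*}(L^\infty(\mathbb{G}_n),h_n)$, together with its Haar state $h_1*\cdots*h_n$, satisfies condition $\rm (AOC)^+$ with dense $C^*$-algebra $C_{\rm red}$ of the free product, i.e.\ $(C_{\rm red}(\mathbb{G}_1),h_1)*\cdots*(C_{\rm red}(\mathbb{G}_n),h_n)$, as recorded in the free product part of Subsection \ref{CQG}.

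The only place requiring genuine attention is the first paragraph: extracting from the proofs of Theorems \ref{example st bi-exact} and \ref{example bi-exact} a common ${\cal C}_l^i$ that carries all of the structure needed downstream (nuclearity; norm continuity of $\mathrm{Ad}\,\Delta_{h_i}^{it}$; the two commutator conditions, one in $\mathbb{B}(L^2(\mathbb{G}_i))$ and one in $\mathbb{B}(L^2(\mathbb{G}_i)\otimes L^2(\mathbb{R}))$). Once that is pinned down, Propositions \ref{free prod bi-exact} and \ref{free prod AOC^+} apply on the nose and the rest is immediate; in particular there is no new analysis at the level of the free product beyond what those two propositions already contain.
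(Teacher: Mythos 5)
Your proposal is correct and follows essentially the same route as the paper, which states Corollary \ref{free prod AOC} as an immediate consequence of Theorems \ref{example st bi-exact} and \ref{example bi-exact} combined with Propositions \ref{free prod bi-exact} and \ref{free prod AOC^+}. Your added care about running both propositions off one and the same ${\cal C}_l^i$ is a reasonable reading of what the paper leaves implicit, and nothing further is needed.
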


\subsection{\bf Proof of Theorem \ref{C}}

We first recall some known properties on free quantum groups and quantum automorphism groups. They were originally proved in  $\cite{Ba97}\cite{Wa98_2}\cite{BDV05}$ for free quantum groups and $\cite{RV06}\cite{So08}\cite{Br12}$ for quantum automorphism groups. See $\cite[\textrm{Section 4}]{DFY13}$ for the details.

When $F\in\mathrm{GL}(2,\mathbb{C})$ is a scalar multiple of a $2$ by $2$ unitary, then $A_u(F)=A_u(1_2)$ and the dual of $A_u(1_2)$ is a quantum subgroup of  $\mathbb{Z}*\hat{A}_o(1_2)$.
When $F\in\mathrm{GL}(n,\mathbb{C})$ is any matrix, then the dual of $A_o(F)$ is isomorphic to a free product of some $\hat{A}_o(F_1)$ and $\hat{A}_u(F_1)$ with $F_1\bar{F_1}\in \mathbb{R}\cdot \mathrm{id}$. 
For such a matrix $F$ as $F\bar{F}=c \cdot\mathrm{id}$ for some $c\in\mathbb{R}$, the quantum group $A_o(F)$ is mononidally equivalent to $\mathrm{SU}_q(2)$, where $-\mathrm{Tr}(FF^*)/c=q+q^{-1}$, $-1\leq q\leq 1$ and $q\neq0$. 
When $q=\pm1$, then $\mathrm{dim}_q(u)=|-\mathrm{Tr}(FF^*)/c|=2$, where $u$ is the fundamental representation of $A_o(F)$, and hence $A_o(F)=\mathrm{SU}_{\pm1}(2)$. 
Thus every $\hat{A}_o(F)$ and $\hat{A}_u(F)$ is a discrete quantum subgroup of a free product of amenable discrete quantum groups and duals of compact quantum groups 
which are monoidally equivalent to $\mathrm{SU}_q(2)$ or $A_u(F)$, where $-1< q< 1$, $q\neq0$, $F\in\mathrm{GL}(n,\mathbb{C})$ is not a scalar multiple of a $2$ by $2$ unitary.

The quantum automorphism group $A_{\rm aut}(M,\phi)$ is co-amenable if and only if $\mathrm{dim}(M)\leq4$. 
For any finite dimensional $C^*$-algebra $M$ and any state $\phi$ on $M$, 
$\hat{A}_{\rm aut}(M,\phi)$ is isomorphic to a free product of duals of quantum automorphism groups with $\delta$-form. Such quantum automorphism groups are co-amenable or monoidally equivalent to $\mathrm{SO}_q(3)$, where $\delta=q+q^{-1}$ and $0< q\leq1$. 
When $q=1$ and $\delta=2$, since $\mathrm{dim}(M)\leq\delta^2=4$, $A_{\rm aut}(M,\phi)$ is co-amenable. 
Thus every $\hat{A}_{\rm aut}(M,\phi)$ is a free product of amenable discrete quantum groups and duals of compact quantum groups 
which are monoidally equivalent to $\mathrm{SO}_q(3)$ for some $q$ with $0<q<1$.

We see the following easy lemma before the proof.
\begin{Lem}
Let $\mathbb{G}$ and $\mathbb{H}$ be compact quantum groups. Assume that $\hat{\mathbb{H}}$ is a quantum subgroup of $\hat{\mathbb{G}}$. 
If $\hat{\mathbb{G}}$ is bi-exact (resp.\ $(L^\infty(\mathbb{G}),h)$ satisfies condition $\rm (AOC)^+$ with the $C^*$-algebra $C_{\rm red}(\mathbb{G})$), 
then $\hat{\mathbb{H}}$ is bi-exact (resp.\ $(L^\infty(\mathbb{H}),h)$ satisfies condition $\rm (AOC)^+$ with the $C^*$-algebra $C_{\rm red}(\mathbb{H})$). 
\end{Lem}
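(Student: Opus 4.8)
The plan is to descend the structure defining bi-exactness (resp. the sufficient condition for $\rm (AOC)^+$) from $\hat{\mathbb{G}}$ to $\hat{\mathbb{H}}$ using the expectation-preserving inclusion that comes with the quantum-subgroup relation. Recall that $\hat{\mathbb{H}}$ being a quantum subgroup of $\hat{\mathbb{G}}$ means there is a surjection $C(\mathbb{G})\twoheadrightarrow C(\mathbb{H})$ of Hopf $*$-algebras, equivalently $\mathrm{Irred}(\mathbb{H})$ embeds into $\mathrm{Irred}(\mathbb{G})$ and $C_{\rm red}(\mathbb{H})\subset C_{\rm red}(\mathbb{G})$ with a Haar-state-preserving conditional expectation $E$, and correspondingly $L^2(\mathbb{H})\subset L^2(\mathbb{G})$ as the closed span of the $H_x\otimes H_{\bar x}$ for $x\in\mathrm{Irred}(\mathbb{H})$, cut out by a projection $e\in\hat{\ell^\infty}(\hat{\mathbb{G}})$ commuting with $C_{\rm red}(\mathbb{H})$ and with $UC_{\rm red}(\mathbb{H})U$, and with $e\xi_h=\xi_h$. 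The key structural facts I will use: (a) $e\,\mathbb{K}(L^2(\mathbb{G}))\,e=\mathbb{K}(L^2(\mathbb{H}))$ (since $e$ has infinite rank and is a sum of minimal central projections adapted to the decomposition), (b) compression by a projection commuting with both $C_{\rm red}(\mathbb{H})$ and $UC_{\rm red}(\mathbb{H})U$ is a u.c.p. map that is multiplicative on each of these algebras and on the copy of $U$, and (c) compression preserves the compact-operator ideal in the sense that $e\mathbb{K}(L^2(\mathbb{G}))e\subset\mathbb{K}(L^2(\mathbb{H}))$.

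\emph{Bi-exactness.} First, exactness of $C_{\rm red}(\mathbb{H})$ follows from exactness of $C_{\rm red}(\mathbb{G})$ since exactness passes to $C^*$-subalgebras. Second, given the u.c.p. map $\theta\colon C_{\rm red}(\mathbb{G})\otimes C_{\rm red}(\mathbb{G})\to\mathbb{B}(L^2(\mathbb{G}))$ with $\theta(a\otimes b)-aUbU\in\mathbb{K}(L^2(\mathbb{G}))$, I define $\theta_{\mathbb{H}}\colon C_{\rm red}(\mathbb{H})\otimes C_{\rm red}(\mathbb{H})\to\mathbb{B}(L^2(\mathbb{H}))$ by $\theta_{\mathbb{H}}(x\otimes y):=e\,\theta(x\otimes y)\,e$, restricting $\theta$ to the subalgebra $C_{\rm red}(\mathbb{H})\otimes C_{\rm red}(\mathbb{H})$ and then compressing by $e$. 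This is u.c.p. For $x,y\in C_{\rm red}(\mathbb{H})$, using that $e$ commutes with $x$ and with $UyU$ and fixes $L^2(\mathbb{H})$, I compute $e\,\theta(x\otimes y)\,e-e\,xUyU\,e=e(\theta(x\otimes y)-xUyU)e\in e\mathbb{K}(L^2(\mathbb{G}))e\subset\mathbb{K}(L^2(\mathbb{H}))$, while $e\,xUyU\,e=x\,eUyUe=x\,(eUe)y(eUe)\cdots$; here I must be careful that the ``$U$'' for $\mathbb{H}$, call it $U_{\mathbb{H}}=J_{\mathbb{H}}\hat J_{\mathbb{H}}$, agrees with $eUe$ (equivalently $U|_{L^2(\mathbb{H})}$) — this is a compatibility statement about modular data of restricted Haar states, which holds because the decomposition of $L^2(\mathbb{G})$ restricts and the $F$-matrices for $\mathbb{H}$ are those of $\mathbb{G}$ on the sub-indices. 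Granting this, $e\,xUyU\,e=x\,U_{\mathbb{H}}\,y\,U_{\mathbb{H}}$ in $\mathbb{B}(L^2(\mathbb{H}))$, so $\theta_{\mathbb{H}}$ witnesses bi-exactness of $\hat{\mathbb{H}}$.

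\emph{Condition $\rm (AOC)^+$.} Here I would verify the three hypotheses of Lemma \ref{AOC} for $\mathbb{H}$ starting from an intermediate algebra ${\cal C}_l$ for $\mathbb{G}$ (which exists, because $\rm (AOC)^+$ via $C_{\rm red}(\mathbb{G})$ was, in all cases of interest, proved through such a ${\cal C}_l$; more intrinsically one takes ${\cal C}_l:=C^*\{C_{\rm red}(\mathbb{G}),\hat\lambda(\ell^\infty(\hat{\mathbb{G}}))\}$ when it works, or quotes the construction). Set ${\cal C}_l^{\mathbb{H}}:=e\,{\cal C}_l\,e$, a $C^*$-algebra on $L^2(\mathbb{H})$ — it is nuclear (compression of a nuclear algebra), contains $C_{\rm red}(\mathbb{H})$ (since $e$ commutes with $C_{\rm red}(\mathbb{H})$ and fixes $L^2(\mathbb{H})$, and $eC_{\rm red}(\mathbb{H})e$ acts as $C_{\rm red}(\mathbb{H})$) and contains $\mathbb{K}(L^2(\mathbb{H}))=e\mathbb{K}(L^2(\mathbb{G}))e$. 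The modular operator $\Delta_h^{it}$ of $\mathbb{G}$ commutes with $e$ and restricts to the modular operator $\Delta_{h_{\mathbb{H}}}^{it}$ of $\mathbb{H}$, so $\mathrm{Ad}\,\Delta_{h_{\mathbb{H}}}^{it}$ acts norm-continuously on ${\cal C}_l^{\mathbb{H}}$. For hypothesis (iii), note $\pi$ commutes with compression by $e\otimes 1$ (as $e$ commutes with $\Delta_h^{it}$), and ${\cal C}_r^{\mathbb{H}}=U_{\mathbb{H}}{\cal C}_l^{\mathbb{H}}U_{\mathbb{H}}=e\,{\cal C}_r\,e$; hence $[\pi({\cal C}_l^{\mathbb{H}}),{\cal C}_r^{\mathbb{H}}\otimes 1]=(e\otimes 1)[\pi({\cal C}_l),{\cal C}_r\otimes 1](e\otimes 1)\subset (e\otimes 1)(\mathbb{K}(L^2(\mathbb{G}))\otimes\mathbb{B}(L^2(\mathbb{R})))(e\otimes 1)\subset\mathbb{K}(L^2(\mathbb{H}))\otimes\mathbb{B}(L^2(\mathbb{R}))$. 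Lemma \ref{AOC} then gives $\rm (AOC)^+$ for $(L^\infty(\mathbb{H}),h)$ with $C_{\rm red}(\mathbb{H})$.

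\emph{Main obstacle.} The routine part is the ideal and compression bookkeeping; the one point that genuinely needs care is the compatibility of modular/canonical data under restriction — precisely that the projection $e$ cutting $L^2(\mathbb{H})$ out of $L^2(\mathbb{G})$ commutes with $J$, $\hat J$, hence $U$, and with $\Delta_h^{it}$, and that the restrictions of $J,\hat J,U,\Delta_h^{it}$ to $L^2(\mathbb{H})$ are the corresponding objects for $\mathbb{H}$. This is where the quantum-subgroup hypothesis is actually used, via the fact that $\mathrm{Irred}(\mathbb{H})\subset\mathrm{Irred}(\mathbb{G})$ is closed under conjugation and tensor decomposition, so the block decomposition $L^2(\mathbb{G})=\bigoplus_x (H_x\otimes H_{\bar x})$ restricts block-wise to $\mathbb{H}$ and all of $J$, $\Delta_h^{it}=\bigoplus (F_x^{it}\otimes F_{\bar x}^{-it})$, and $\hat J$ are block-diagonal for this decomposition with entries depending only on the block $x$. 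Once this is recorded, every displayed identity above is immediate, and both halves of the lemma follow by the same compression argument.
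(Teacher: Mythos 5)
Your argument for the bi-exactness half is essentially the paper's own proof: take the Haar-state-preserving conditional expectation $C_{\rm red}(\mathbb{G})\to C_{\rm red}(\mathbb{H})$ coming from the quantum-subgroup relation, let $e$ be the associated Jones projection onto $L^2(\mathbb{H})$, note that $e$ commutes with $J$, $\hat J$, $\Delta_h^{it}$ and with $C_{\rm red}(\mathbb{H})$ and $UC_{\rm red}(\mathbb{H})U$, and compress $\theta$ by $e$. The modular-compatibility point you isolate as the ``main obstacle'' is exactly the one the paper records (``modular objects $J$ and $\Delta^{it}$ of the Haar state commute with $e_{\mathbb{H}}$''), and your block-diagonal justification for it is correct.

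The $\rm (AOC)^+$ half, however, has a genuine logical gap. The lemma's hypothesis is only that $(L^\infty(\mathbb{G}),h)$ satisfies condition $\rm (AOC)^+$ with $C_{\rm red}(\mathbb{G})$, i.e.\ that a u.c.p.\ map $\theta$ on $(C_{\rm red}(\mathbb{G})\rtimes_{\rm r}\mathbb{R})\otimes(C_{\rm red}(\mathbb{G})\rtimes_{\rm r}\mathbb{R})^{\rm op}$ exists with the stated compactness of defects. You instead assume the strictly stronger intermediate condition of Lemma \ref{AOC} --- the existence of a nuclear ${\cal C}_l$ containing $C_{\rm red}(\mathbb{G})$ and $\mathbb{K}(L^2(\mathbb{G}))$ with the commutator condition --- and you justify this only by remarking that ``in all cases of interest'' $\rm (AOC)^+$ was proved through such a ${\cal C}_l$. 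That does not prove the lemma as stated: condition $\rm (AOC)^+$ does not formally provide any ${\cal C}_l$. The fix is to do exactly what you did in the bi-exactness half: compress the given $\theta$ by $e\otimes 1$ on $L^2(\mathbb{G})\otimes L^2(\mathbb{R})$, restricted to $(C_{\rm red}(\mathbb{H})\rtimes_{\rm r}\mathbb{R})\otimes(C_{\rm red}(\mathbb{H})\rtimes_{\rm r}\mathbb{R})^{\rm op}$ (which makes sense because the modular action restricts to a norm-continuous action on $C_{\rm red}(\mathbb{H})$), note that $(e\otimes1)\bigl(\mathbb{K}(L^2(\mathbb{G}))\otimes\mathbb{B}(L^2(\mathbb{R}))\bigr)(e\otimes1)\subset\mathbb{K}(L^2(\mathbb{H}))\otimes\mathbb{B}(L^2(\mathbb{R}))$, and observe that local reflexivity of $C_{\rm red}(\mathbb{H})\rtimes_{\rm r}\mathbb{R}$ --- a requirement of condition $\rm (AOC)^+$ that your write-up never addresses --- is inherited because it is a $C^*$-subalgebra of the locally reflexive algebra $C_{\rm red}(\mathbb{G})\rtimes_{\rm r}\mathbb{R}$.
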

\begin{proof}
By assumption there exists the unique Haar state preserving conditional expectation $E_{\mathbb{H}}$ from $L^\infty(\mathbb{G})$ onto $L^\infty(\mathbb{H})$ (and from $C_{\rm red}(\mathbb{G})$ onto $C_{\rm red}(\mathbb{H})$). 
It extends to a projection $e_{\mathbb{H}}$ from $L^2(\mathbb{G})$ onto $L^2(\mathbb{H})$. 
Let $\theta$ be a u.c.p.\ map as in the definition of bi-exactness (resp.\ condition $\rm (AOC)^+$). Then a u.c.p.\ map given by $a\otimes b^{\rm op}\mapsto e_{\mathbb{H}}\theta(a\otimes b^{\rm op})e_{\mathbb{H}}$ for $a, b\in C_{\rm red}(\mathbb{H})$ 
(resp.\ $a\otimes b^{\rm op}\mapsto (e_{\mathbb{H}}\otimes 1)\theta(a\otimes b^{\rm op})(e_{\mathbb{H}}\otimes 1)$ for $a, b\in C_{\rm red}(\mathbb{H})\rtimes_{\rm r}\mathbb{R}$) do the work. 
Note that modular objects $J$ and $\Delta^{it}$ of the Haar state commute with $e_{\mathbb{H}}$. 
Local reflexivity of $C_{\rm red}(\mathbb{H})$ (resp.\ $C_{\rm red}(\mathbb{H})\rtimes_{\rm r}\mathbb{R}$) follows from that of $C_{\rm red}(\mathbb{G})$ (resp.\ $C_{\rm red}(\mathbb{G})\rtimes_{\rm r}\mathbb{R}$) since it is a subalgebra.
\end{proof}

\begin{proof}[\bf Proof of Theorem \ref{C}]
For weak amenablity and the $\rm W^*$CBAP, they are already discussed in $\cite[\textrm{Section 5}]{DFY13}$ (see also $\cite[\textrm{Theorem 4.6}]{Fr11}$). Hence we see only bi-exactness and condition $\rm (AOC)^+$.

Let $\mathbb{G}$ be as in the statement. Then thanks for the observation above, $\hat{\mathbb{G}}$ is a discrete quantum subgroup of $\hat{\mathbb{G}}_1*\cdots *\hat{\mathbb{G}}_n$, where each $\mathbb{G}_i$ is 
co-amenable, a dual of bi-exact discrete group, or monoidally equivalent to $\mathrm{SU}_q(2)$, $\mathrm{SO}_q(3)$, or $A_u(F)$, where $-1<q<1$, $q\neq 0$, $F$ is not a scalar multiple of a $2$ by $2$ unitary.
Hence by Theorems \ref{example st bi-exact} and \ref{example bi-exact}, Remarks \ref{amenable} and \ref{group}, Propositions \ref{free prod bi-exact} and \ref{free prod AOC^+}, and the last lemma, $\hat{\mathbb{G}}$ is bi-exact and $(L^\infty(\mathbb{G}),h)$ satisfies condition $\rm (AOC)^+$ with $C_{\rm red}(\mathbb{G})$.
\end{proof}

\section{\bf Proofs of main theorems}

\subsection{\bf Proof of Theorem \ref{A}}

To prove Theorem \ref{A}, we can use the same manner as that in $\cite[\rm Theorem\ 3.1]{PV12}$ except for (i) Proposition 3.2 and (ii) Subsection 3.5 (case 2) in $\cite{PV12}$.
 
To see (ii) in our situation, we need a structure of quantum group von Neumann algebras, which is weaker than that of group von Neumann algebras but enough to solve our problem. Since we will see a similar (and more general) phenomena in the next subsection (Lemma \ref{case2}), we omit it.

Hence here we give a proof of (i). To do so, we see one proposition which is a quantum analogue of a well known property for crossed products with discrete groups.
\begin{Pro}
Let $\mathbb{G}$ be a compact quantum group of Kac type, whose dual acts on a tracial von Neumann algebra $(B,\tau_B)$ as a trace preserving action. Write $M:= \hat{\mathbb{G}}\ltimes B$. Let $p$ be a projection in $M$ and $A\subset pMp$ a von Neumann subalgebra. Then the following conditions are equivalent:

\begin{itemize}
	\item[$\rm (i)$] $A\not\preceq_M B$;
	\item[$\rm (ii)$] there exists a net $(w_n)_n$ of unitaries in $A$ such that  $\lim_n \|(w_n)_{i,j}^x \|_{2,\tau_B}=0$ for any $x\in \mathrm{Irred}(\mathbb{G})$ and $i,j$, where $(w_n)_{i,j}^x$ is given by $w_i=\sum_{x\in \mathrm{Irred}(\mathbb{G}), i,j}(w_n)_{i,j}^x u_{i,j}^x$ for a fixed basis $(u_{i,j}^x)$ satisfying $h(u_{i,j}^x u_{k,l}^{y*})=\delta_{i,k}\delta_{j,l}\delta_{x,y}h(u_{i,j}^x u_{i,j}^{x*})$.
\end{itemize}
\end{Pro}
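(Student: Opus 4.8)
The plan is to read off $A\not\preceq_M B$ from Popa's intertwining criterion and then translate that criterion into the stated Fourier-coefficient condition. Since $\mathbb{G}$ is of Kac type and $\hat{\mathbb{G}}$ acts on $B$ preserving $\tau_B$, the canonical state $\widetilde{\tau_B}:=\tau_B\circ E_B$ on $M$ is a trace, where $E_B$ is the canonical conditional expectation onto $B$ recalled in Subsection~\ref{CQG}; in particular $M$, hence $A\subset pMp$, is finite. So Theorem~\ref{popa embed} applies with the projections $1_M$ and $p$ and the trace $\tau_B$ on $B$: it says that $A\not\preceq_M B$ holds if and only if there is a net (equivalently, by separability, a sequence) of unitaries $(w_n)_n$ in $A$ with $E_B(b^\ast w_n a)\to 0$ strongly for all $a,b\in pM$. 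Because $w_n=pw_np$, that condition is unchanged if $a,b$ range over all of $M$, and I would show it is equivalent to (ii).

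The computational core is a description of $E_B$ on Fourier series. Inside $M$ the elements $\rho(u^x_{i,j})$ span a unital $\ast$-subalgebra on which $E_B$ restricts to the (GNS extension of the) Haar state, and $E_B$ is a $\theta(B)$-bimodule map; combined with the orthogonality relation $h(u^x_{i,j}u^{y\ast}_{k,l})=\delta_{i,k}\delta_{j,l}\delta_{x,y}\,h(u^x_{i,j}u^{x\ast}_{i,j})$ this yields, for any $m=\sum_{y,k,l}m^y_{k,l}\,\rho(u^y_{k,l})\in M$ with Fourier coefficients $m^y_{k,l}\in B$, the identity
\[
E_B\!\left(m\,\rho(u^x_{i,j})^\ast\right)=h(u^x_{i,j}u^{x\ast}_{i,j})\,m^x_{i,j}.
\]
For the general case I would use the covariance relation, which for Kac type reads $\rho(u^z_{s,t})\,\theta(c)=\sum_{s'}\theta\!\left(\alpha(c)^z_{s,s'}\right)\rho(u^z_{s',t})$ with ``action coefficients'' $\alpha(c)^z_{s,s'}\in B$ that are $\|\cdot\|_\infty$-contractive in $c$ and satisfy $\sum_{s'}\|\alpha(c)^z_{s,s'}\|_{2,\tau_B}^2\le\|c\|_{2,\tau_B}^2$ (as $\alpha$ is a $\tau_B$-preserving $\ast$-homomorphism), together with the analogous relation for $\rho(u^z_{s,t})^\ast$ using that $\bar z$ is again a corepresentation acted on via $\alpha$. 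Commuting all the $\theta(B)$-factors to the left, one rewrites $E_B\!\left((\theta(d)\rho(u^y_{k,l}))^\ast\,w_n\,\theta(c)\rho(u^x_{i,j})\right)$ as a sum, over $z\in\mathrm{Irred}(\mathbb{G})$ and matrix indices, of terms $\theta(e_{z,s,t})\,h(w_{z,s,t})$ in which each $w_{z,s,t}$ is a word in matrix coefficients of $\bar y$, $z$ and $x$, and each $e_{z,s,t}\in B$ is a product of $d^\ast$, $c$ and one Fourier coefficient $(w_n)^z_{s,t}$. Such a word has nonzero Haar value only when the trivial corepresentation occurs in $\bar y\otimes z\otimes x$, i.e.\ only when $z$ lies in the \emph{finite} set of irreducibles contained in $y\otimes\bar x$ (and then $s,t$ also range over finite sets), so the sum is in fact finite, and $\|E_B(b^\ast w_n a)\|_{2,\tau_B}$ is bounded by a constant depending only on $c,d$ and the fixed indices, times $\max\|(w_n)^z_{s,t}\|_{2,\tau_B}$ over that finite index set.

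Granting this, (i)$\Rightarrow$(ii) is immediate: given the net from Popa's criterion, put $b=p$ and $a=p\,\rho(u^x_{i,j})^\ast$; since $w_n=pw_np$ the displayed identity gives $E_B(w_n\rho(u^x_{i,j})^\ast)=h(u^x_{i,j}u^{x\ast}_{i,j})(w_n)^x_{i,j}$, and as $h(u^x_{i,j}u^{x\ast}_{i,j})\neq 0$ and the $w_n$ are unitaries (so the left side is $\|\cdot\|_\infty$-bounded), strong convergence to $0$ forces $\|(w_n)^x_{i,j}\|_{2,\tau_B}\to 0$. Conversely, for (ii)$\Rightarrow$(i): given a net of unitaries $w_n\in A$ all of whose Fourier coefficients tend to $0$ in $\|\cdot\|_{2,\tau_B}$, a diagonal argument over the countable index set $\mathrm{Irred}(\mathbb{G})\times\mathbb{N}\times\mathbb{N}$ produces a subsequence; to verify Popa's criterion it suffices, by $\|\cdot\|_{2,\tau_B}$-density of the linear span of $\{\theta(c)\rho(u^x_{i,j})\}$ in $L^2(M)$ and the uniform bound $\|E_B(b^\ast w_n\,\xi)\|_{2,\tau_B}\le\|b\|_\infty\|\xi\|_{2,\tau_B}$ on $L^2(M)$, to treat $a,b$ of that form; then the finite sum of the previous paragraph has every summand tending to $0$. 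Hence $A\not\preceq_M B$, which completes the equivalence.

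I expect the second paragraph — the commutation bookkeeping — to be the real work: one must track how $\theta(B)$ moves past products of matrix coefficients (precisely where $\tau_B$-preservation of the action is used, to keep the $L^2$-estimates) and then observe that only finitely many Fourier components of $w_n$ survive the conditional expectation (where the fusion rules of $\mathbb{G}$ enter). Everything else is formal. Note also that the Kac hypothesis is used in an essential way only to make $\widetilde{\tau_B}$ a trace, so that the intertwining theorem applies directly; without it one is forced onto the continuous core, which is the subject of Theorem~\ref{B}.
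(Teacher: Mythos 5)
Your proposal is correct and follows essentially the same route as the paper: both directions go through Popa's intertwining criterion, with (i)$\Rightarrow$(ii) obtained by testing against $a=u^{x*}_{i,j}$, $b=1$ via the orthogonality relations, and (ii)$\Rightarrow$(i) by reducing to matrix-coefficient test vectors, moving the $B$-factors past coefficients via the covariance relation, and using the fusion rules to see that only finitely many Fourier components of $w_n$ survive $E_B$. Your write-up is somewhat more explicit about the $\theta(B)$-bookkeeping and the reduction to the projection $p$, but the content is the same.
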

\begin{proof}
We first assume (i). Then by definition, there exists a net $(w_n)_n$ in ${\cal U}(A)$ such that  $\lim_n \|E_B(b^*w_na) \|_2=0$ for any $a,b \in M$. Putting $b=1$ and $a=u_{k,l}^{y*}$ and since $E_B(w_n u_{k,l}^{y*})=\sum_{x,i,j}(w_n)_{i,j}^x h(u_{i,j}^x u_{k,l}^{y*})=(w_n)_{k,l}^y h(u_{k,l}^y u_{k,l}^{y*})$, we have 
\begin{equation*}
\|(w_n)_{k,l}^y\|_2= |h(u_{k,l}^y u_{k,l}^{y*})|^{-1}\|E_B(w_n u_{k,l}^{y*})\|_2\rightarrow 0 \quad (n\rightarrow \infty).
\end{equation*}

Conversely, assume (ii). We will show $\lim_n \|E_B(b^*w_na) \|_2=0$ for any $a,b \in M$. To see this, we may assume $a=u_{\alpha,\beta}^y$ and $b=u_{k,l}^z$.
For any $c\in B$, $u_{k,l}^{z*}c$ is a linear combination of $c' u_{k',l'}^{z*}$ for some $c'\in B$ and $k',l'$. When we apply $E_B$ to $u_{k,l}^{z*} c u_{i,j}^x u_{\alpha,\beta}^y$, it does not vanish only if $\bar{z}\otimes x\otimes y$ contains the trivial representation. For fixed $y,z$, the number of such $x$ is finite (since this means $x\in z\otimes \bar{y}$). So we have 

\begin{eqnarray*}
\|E_B(b^*w_na) \|_2 &=& \|\sum_{\textrm{finitely many }x, i,j} E_B(b^*(w_n)_{i,j}^x u_{i,j}^xa) \|_2\\
&\leq& \sum_{\textrm{finitely many }x, i,j} \|b^*(w_n)_{i,j}^x u_{i,j}^xa \|_2\\
&\leq& \sum_{\textrm{finitely many }x, i,j} \|b^*\|\|(w_n)_{i,j}^x \|_2 \|u_{i,j}^xa\| \rightarrow 0.
\end{eqnarray*}
\end{proof}

The following proposition is a corresponding one to (i) Proposition 3.2 in $\cite{PV12}$.
\begin{Pro}
Theorem $\ref{A}$ is true if it is true for any trivial action.
\end{Pro}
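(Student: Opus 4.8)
The plan is to reduce to the case of a trivial action by the standard \emph{comultiplication} argument, as in \cite[Proposition 3.2]{PV12}. Write $M=\hat{\mathbb{G}}\ltimes_\alpha B$ and let $\hat\alpha\colon M\to M\,\bar\otimes\,L^\infty(\mathbb{G})$ be the dual coaction of $\mathbb{G}$ on the crossed product (see \cite{BS93}): it is the identity on the canonical copy $\theta(B)$ of $B$ and acts by the comultiplication $\Phi$ of $\mathbb{G}$ on the part generated by the crossed product unitary $\hat{\mathscr{W}}$, so explicitly $\hat\alpha(\theta(b)\rho(u_{i,j}^z))=\sum_k\theta(b)\rho(u_{i,k}^z)\otimes\rho(u_{k,j}^z)$. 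Since $\hat{\mathbb{G}}\ltimes\mathbb{C}=L^\infty(\mathbb{G})$ for the trivial action, the target is $\tilde M:=M\,\bar\otimes\,L^\infty(\mathbb{G})=\hat{\mathbb{G}}\ltimes_{\mathrm{triv}}\tilde B$, the crossed product of the \emph{trivial} action of $\hat{\mathbb{G}}$ on the tracial von Neumann algebra $\tilde B:=M$ (tracial because $\alpha$ preserves $\tau_B$ and $\mathbb{G}$ is of Kac type), with $\tilde B=M\otimes 1\subset\tilde M$ and $\hat\alpha(B)\subset\tilde B$. The map $\hat\alpha$ is a normal unital $\ast$-embedding which is trace preserving for $\tau_M$ and $\tau_M\otimes h$, hence injective and admitting a trace preserving normal conditional expectation $\tilde M\to\hat\alpha(M)$; moreover, using the explicit formula for $E_B$ recalled above together with orthogonality of the Haar state $h$ on matrix coefficients of non-trivial corepresentations, one checks the key identity
\[
E_{\tilde B}\circ\hat\alpha=\hat\alpha\circ E_B,\qquad E_{\tilde B}=\mathrm{id}_M\otimes h .
\]
Set $\tilde q:=\hat\alpha(q)$ and $\tilde A:=\hat\alpha(A)\subset\tilde q\tilde M\tilde q$.

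Next I would check that the hypotheses of Theorem \ref{A} are in force for this trivial action: $\hat{\mathbb{G}}$ is the same weakly amenable, bi-exact quantum group, the (trivial) action on the tracial algebra $\tilde B$ is trace preserving, and, crucially, $\tilde A$ is amenable relative to $\tilde B$ in $\tilde M$. The last point follows from amenability of $A$ relative to $B$ in $M$ together with $E_{\tilde B}\circ\hat\alpha=\hat\alpha\circ E_B$: this is the usual stability of relative amenability along a trace preserving inclusion with expectation intertwining the relevant conditional expectations, realized by transporting an $A$-central hypertrace on $\langle M,e_B\rangle$ to a $\tilde A$-central one on $\langle\tilde M,e_{\tilde B}\rangle$ through $\hat\alpha$. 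Applying Theorem \ref{A} in the trivial-action case to $\tilde A\subset\tilde q\tilde M\tilde q$ then gives either $\tilde A\preceq_{\tilde M}\tilde B$, or ${\cal N}_{\tilde q\tilde M\tilde q}(\tilde A)''$ amenable relative to $\tilde B$ in $\tilde M$.

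Finally I would push each alternative back down to $M$. If ${\cal N}_{\tilde q\tilde M\tilde q}(\tilde A)''$ is amenable relative to $\tilde B$, then since $\hat\alpha$ sends ${\cal N}_{qMq}(A)$ into ${\cal N}_{\tilde q\tilde M\tilde q}(\tilde A)$ and relative amenability passes to subalgebras and then descends along $\hat\alpha$ (again using the same intertwining identity, together with $\tilde B\cap\hat\alpha(M)=\hat\alpha(B)$), the algebra ${\cal N}_{qMq}(A)''$ is amenable relative to $B$ in $M$, which is alternative (ii). If instead $\tilde A\preceq_{\tilde M}\tilde B$, I would argue the contrapositive via Theorem \ref{popa embed}: supposing $A\not\preceq_M B$, fix unitaries $(w_n)_n$ in $A$ with $E_B(x^{\ast}w_ny)\to0$ strongly for all $x,y\in M$; testing $\hat\alpha(w_n)$ against elements $m\otimes\rho(u_{i,j}^z)$, whose span is $\|\cdot\|_2$-dense in $\tilde M$, the Haar-state orthogonality collapses $E_{\tilde B}\big((m_1\otimes\rho(u_{i_1,j_1}^{z_1}))^{\ast}\,\hat\alpha(w_n)\,(m_2\otimes\rho(u_{i_2,j_2}^{z_2}))\big)$ to a finite sum of terms of the shape $m_1^{\ast}\,\theta(c_n)\rho(u)\,m_2$ with $\|\theta(c_n)\rho(u)\|_2$ dominated by $\|E_B(w_n\rho(u)^{\ast})\|_2\to0$, so the expectation tends to $0$ and hence $\tilde A\not\preceq_{\tilde M}\tilde B$, a contradiction; thus $A\preceq_M B$, alternative (i). The genuinely new work compared with the group case is the verification of the identity $E_{\tilde B}\circ\hat\alpha=\hat\alpha\circ E_B$ and the bookkeeping behind the two transfer statements for relative amenability in the quantum crossed-product setting (the Kac hypothesis being exactly what supplies the traces on which all the hypertrace and $\preceq$ arguments rely); I expect the backwards transfer of relative amenability of the normalizer along $\hat\alpha$ to be the fiddliest point, but it reduces to standard stability properties once the intertwining identity is available.
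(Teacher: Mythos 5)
Your proposal is correct and follows essentially the same route as the paper: your dual coaction $\hat\alpha$ is exactly the paper's Fell-absorption map $\Delta(bu^x_{i,j})=\sum_k bu^x_{i,k}\otimes u^x_{k,j}$ into ${\cal M}=M\otimes L^\infty(\mathbb{G})$ viewed as the trivial-action crossed product over $\tilde B=M\otimes 1$, your intertwining identity $E_{\tilde B}\circ\hat\alpha=\hat\alpha\circ E_B$ is the paper's $\Delta\circ E_B=E_{M\otimes 1}\circ\Delta$, and your three transfer steps are precisely the paper's statements (1)--(3). The only cosmetic difference is that you verify the descent of $\preceq$ by a direct matrix-coefficient computation with $\hat\alpha(w_n)$, whereas the paper isolates the same Haar-orthogonality computation in a preceding proposition characterizing $A\not\preceq_M B$ via vanishing of the Fourier coefficients $(w_n)^x_{i,j}$ in $\|\cdot\|_2$.
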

\begin{proof}
Let $\mathbb{G}$, $B$, $M$, $p$, and $A$ be as in Theorem \ref{A}.
By Fell's absorption principle, we have the following $*$-homomorphism:
\begin{equation*}
\Delta\colon M= \hat{\mathbb{G}}\ltimes B \ni bu_{i,j}^x \longmapsto \sum_{k=1}^{n_x} bu_{i,k}^x\otimes u_{k,j}^x\in M\otimes L^\infty(\mathbb{G})=:{\cal M}.
\end{equation*}
Put ${\cal A}:=\Delta(A)$, $\tilde{q}:=\Delta(q)$ and ${\cal P}:={\cal N}_{\tilde{q}{\cal M}\tilde{q}}({\cal A})''$. Then consider following statements:

\begin{itemize}
	\item[(1)] If $A$ is amenable relative to $B$ in $M$, then $\cal A$ is amenable relative to $M\otimes1$ in $\cal M$.
	\item[(2)] If $\cal P$ is amenable relative to $M\otimes1$ in $\cal M$, then ${\cal N}_{qMq}(A)''$ is amenable relative to $B$ in $M$.
	\item[(3)] If ${\cal A}\preceq_{\cal M}M\otimes 1$, then $A\preceq_M B$.
\end{itemize}
If one knows them, then (1) and our assumption say that we have either $\cal P$ is amenable relative to $M\otimes1$ or ${\cal A}\preceq_{\cal M}M\otimes 1$. Then (2) and (3) imply our desired conclusion.

To show (1) and (2), we only need the property $\Delta\circ E_B=E_{M\otimes 1}\circ \Delta=E_{B\otimes1}\circ \Delta$. So we can use the same strategy as in the group case. To show (3) we need the previous proposition, and once we accept it, then (3) is easily verified. 
\end{proof}

\subsection{\bf Proof of Theorem \ref{B}}

We use the same symbol as in the statement of Theorem \ref{B}. We moreover use the following notation:
\begin{eqnarray*}
&&P:= {\cal N}_{qMq}(A)'', \ 
C_h:=C_h(L^\infty(\mathbb{G})),\ 
\tau:=\mathrm{Tr}_M(q\cdot q),\\
&&L^2(M):=L^2(M,\mathrm{Tr}_M)=L^2(B,\tau_B)\otimes L^2(C_h,\mathrm{Tr}),\\
&&\pi\colon M=B\otimes C_h \ni b\otimes x \mapsto (b\otimes_A q \otimes x) \in \mathbb{B}((L^2(M)q \otimes_A L^2(P))\otimes L^2(C_h,\mathrm{Tr})),\\
&&\theta\colon P^{\rm op} \ni y^{\rm op}\mapsto (q\otimes_A y^{\rm op}) \otimes 1 \in \mathbb{B}((L^2(M)q \otimes_A L^2(P))\otimes L^2(C_h,\mathrm{Tr})),\\
&&N:=W^*\{\pi(B\otimes 1), \theta(P^{\rm op})\},\ {\cal N}:=N\otimes C_h.
\end{eqnarray*}

We first recall the following theorem which is a generalization of $\cite[\textrm{Theorem 3.5}]{OP07}$ and $\cite[\textrm{Theorem B}]{Oz10}$. 
Now we are working on a semifinite von Neumann algebra $\cal N$ but still the theorem is true. 
To verify this, we need the following observation. 
If a semifiite von Neumann algebra has the $\rm W^*$CBAP, $(\phi_i)_i$ is an approximation identity, and $(p_j)_j$ is a net of trace-finite projections converging to 1 strongly, then a subnet of $(\psi_j\circ\phi_i)$, where $\psi_j$ is a compression by $p_j$, is an approximation identity. 
Hence we can find an approximate identity whose images are contained in a trace-finite part of the semifinite von Neumann algebra. 
As finite rank maps relative to $B$ $\cite[\textrm{Definition 5.3}]{PV11}$, one can take linear spans of 
\begin{equation*}
\psi_{y,z,r,t}\colon qMq\rightarrow qMq; x\mapsto y(\mathrm{id}_B\otimes \mathrm{Tr})(zxr)t,
\end{equation*}
where $y,z,r,t\in M$ satisfying $y=qy$, $t=tq$, $z=(1\otimes p)z$, and $r=r(1\otimes p')$ for some Tr-finite projections $p,p'\in C_h$. Note that for fixed $p\in C_h$ with $\mathrm{Tr}(p)<\infty$, $\mathrm{Tr}(p)^{-1}(\mathrm{id}_B\otimes \mathrm{Tr})$ is a conditional expectation from $B\otimes pC_hp$ onto $B$.

\begin{Thm}[{\cite[\textrm{Theorem 5.1}]{PV12}}]
There exists a net $(\omega_i)_i$ of normal states on $\pi(q){\cal N}\pi(q)$ such that  
\begin{itemize}
	\item[$\rm (i)$] $\omega_i(\pi(x))\rightarrow \tau(x)$ for any $x\in qMq$;
	\item[$\rm (ii)$] $\omega_i(\pi(a)\theta(\bar{a}))\rightarrow 1$ for any $a\in {\cal U}(A)$;
	\item[$\rm (iii)$] $\|\omega_i\circ\mathrm{Ad}(\pi(u)\theta(\bar{u}))-\omega_i\| \rightarrow 0$ for any $u \in {\cal N}_{qMq}(A)$. 
\end{itemize}
Here $\bar a$ means $(a^{\rm op})^*$.
\end{Thm}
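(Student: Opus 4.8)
The plan is to reprove \cite[\textrm{Theorem 5.1}]{PV12} in the present semifinite setting, transporting the Popa--Vaes argument --- itself a refinement of the Ozawa--Popa weak compactness method --- to the core $M=C_h\otimes B$ with its semifinite trace. Only amenability of $A$ and the $W^*$CBAP of $L^\infty(\mathbb{G})$ will be used; condition $\rm (AOC)^+$ plays no role in this statement. One records at the outset that, since $q$ is $\mathrm{Tr}_M$-finite, $qMq$ is a finite von Neumann algebra, so $A$ and $P={\cal N}_{qMq}(A)''$ are finite with the trace $\tau$, and $L^2(A)$, $L^2(P)$, ${\cal H}:=L^2(qM)\otimes_A L^2(P)$ are all available. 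The proof then combines two inputs.

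First I would invoke weak compactness. Since $A$ is amenable, the action of $G:={\cal N}_{qMq}(A)$ on $A$ is weakly compact in the sense of Ozawa--Popa: there is a net of unit vectors $\xi_n$ in the positive cone of $L^2(A)\otimes\overline{L^2(A)}$ with $\langle(a\otimes 1)\xi_n,\xi_n\rangle\to\tau(a)$ and $\langle(1\otimes\overline a)\xi_n,\xi_n\rangle\to\tau(a)$ for $a\in A$, with $\|(a\otimes\overline a)\xi_n-\xi_n\|\to0$ for $a\in{\cal U}(A)$, and with $\|(u\otimes\overline u)\xi_n-\xi_n\|\to0$ for $u\in G$. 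Using the inclusions $L^2(A)\hookrightarrow L^2(M)q$ (from $A\subset qMq\subset M$) and $\overline{L^2(A)}\hookrightarrow L^2(P)$ (from $A\subset P$), together with a fixed $\mathrm{Tr}$-finite vector on the $L^2(C_h,\mathrm{Tr})$-factor, each $\xi_n$ produces a unit vector in $\pi(q)\big({\cal H}\otimes L^2(C_h,\mathrm{Tr})\big)$, hence a normal state on $\pi(q){\cal N}\pi(q)$. On these states the $G$-invariance of $\xi_n$ accounts for $\rm (iii)$, the almost-$A$-centrality accounts for $\rm (ii)$ (because $\pi(a)$ and $\theta(\overline a)$ act diagonally on the $\otimes_A$-tensor), and the trace-recovery gives $\rm (i)$ \emph{for $x\in A$}; but these vectors are supported on $\pi(A)$ and are far too small to witness $\rm (i)$ on all of $qMq$.

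Next I would bring in the $W^*$CBAP of $L^\infty(\mathbb{G})$, hence of $C_h$ and of $M$. As indicated in the paragraph preceding the statement, I would extract a net of normal, completely bounded, finite-rank-relative-to-$B$ maps $\Phi_k\colon qMq\to qMq$ with $\Phi_k\to\mathrm{id}_{qMq}$ point $\sigma$-weakly and $\limsup_k\|\Phi_k\|_{\rm c.b.}<\infty$, and --- the semifinite adjustment --- by composing with compressions along an increasing net of $\mathrm{Tr}$-finite projections $p_j\uparrow 1$ in $C_h$, arrange each $\Phi_k$ to be a finite sum of maps $\psi_{y,z,r,t}\colon x\mapsto y\,(\mathrm{id}_B\otimes\mathrm{Tr})(zxr)\,t$ with $z=(1\otimes p)z$, $r=r(1\otimes p')$ for $\mathrm{Tr}$-finite $p,p'\in C_h$. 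Since $\mathrm{Tr}(p)^{-1}(\mathrm{id}_B\otimes\mathrm{Tr})$ is a conditional expectation from $B\otimes pC_hp$ onto $B$, the ``$(\mathrm{id}_B\otimes\mathrm{Tr})$'' factor of $\psi_{y,z,r,t}$ is realized, on ${\cal H}\otimes L^2(C_h,\mathrm{Tr})$, by an $e_B$-type operator built from $\pi(B\otimes1)$ and $\theta(P^{\rm op})$ on the ${\cal H}$-factor against finite-rank $C_h$-data on the $L^2(C_h)$-factor; consequently $\Phi_k$ is implemented by a bounded operator $V_k\in\pi(q){\cal N}\pi(q)$ whose matrix coefficients $\langle V_k\,\pi(x)\,\Xi,\pi(y^*)\,\Xi\rangle$ reproduce $\tau(y\,\Phi_k(x))$-type quantities.

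Finally, for a reindexing $(n,k)$ with $k\to\infty$ fast relative to $n$, I would form the normal states obtained by spreading $\xi_n$ through $V_k$ and renormalizing, and let $\omega_i$ be a weak-$*$ limit point. Then $\rm (i)$ follows because $V_k$ implements $\Phi_k\to\mathrm{id}_{qMq}$ while the $\xi_n$ recover $\tau$; $\rm (ii)$ persists because $\pi(a)\theta(\overline a)$ moves the relevant vectors only by an error bounded in terms of $\|(a\otimes\overline a)\xi_n-\xi_n\|$, using $A$-centrality of $\xi_n$ to commute $\theta(\overline a)$ past $V_k$; and $\rm (iii)$ persists by $G$-equivariance of the construction, with error bounded by $\|(u\otimes\overline u)\xi_n-\xi_n\|$. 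The step requiring genuine care beyond transcribing Popa--Vaes is the $W^*$CBAP input: showing the finite-rank-relative-to-$B$ maps can be taken with $\mathrm{Tr}$-finite support projections on the $C_h$-side and are then implemented \emph{inside} $\pi(q){\cal N}\pi(q)$, not merely in some larger $\mathbb{B}({\cal H}\otimes L^2(C_h))$, while keeping the unnormalized trace $\tau=\mathrm{Tr}_M(q\cdot q)$ correctly tracked through the semifinite manipulations; everything else is a routine adaptation of the ${\rm II}_1$ argument.
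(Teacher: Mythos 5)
The paper does not actually prove this statement: it is quoted from \cite[Theorem 5.1]{PV12}, and the only original content is the paragraph preceding it, which records the two adjustments needed to run the Popa--Vaes proof over the semifinite algebra ${\cal N}$ --- namely that an approximate identity for the $\mathrm{W}^*$CBAP can be compressed by $\mathrm{Tr}$-finite projections $p_j\rightarrow 1$ so as to take values in the trace-finite part, and that the finite-rank-relative-to-$B$ maps may be taken to be spans of $\psi_{y,z,r,t}$ with $\mathrm{Tr}$-finite cut-offs in $C_h$. Your second paragraph reproduces exactly this adjustment, and you are right that condition $\mathrm{(AOC)}^+$ plays no role here. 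The difficulty is that your first and third paragraphs replace the Popa--Vaes mechanism by an Ozawa--Popa weak compactness argument, and that replacement breaks down.

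Two steps fail. First, the weak compactness vectors live in $L^2(A)\otimes\overline{L^2(A)}$, and the inclusions $L^2(A)\subset L^2(M)q$ and $\overline{L^2(A)}\subset L^2(P)$ do \emph{not} induce an isometry into the relative tensor product $L^2(M)q\otimes_A L^2(P)$: for every $u\in{\cal U}(A)$ one has $\hat u\otimes_A\widehat{u^*}=\hat q\, u\otimes_A\widehat{u^*}=\hat q\otimes_A u\widehat{u^*}=\hat q\otimes_A\hat q$, so an orthonormal family of such elementary tensors collapses to a single vector. The vector $\hat q\otimes_A\hat q$ is already exactly $\pi(a)\theta(\bar a)$-invariant; the whole content of the theorem is condition $\rm (i)$, i.e.\ spreading mass over all of $qMq$ without destroying $\rm (ii)$ and $\rm (iii)$, and the weak compactness data contributes nothing to that. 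Second, for states of the form $\langle\,\cdot\,V_k\eta_n,V_k\eta_n\rangle$, conditions $\rm (ii)$ and $\rm (iii)$ require commuting $\pi(a)\theta(\bar a)$ and $\pi(u)\theta(\bar u)$ past $V_k$; the error there is governed by the failure of $\Phi_k$ to be $\mathrm{Ad}(u)$-equivariant, not by $\|(a\otimes\bar a)\xi_n-\xi_n\|$, and this is precisely the point where the real work lies. The proof of \cite[Theorem 5.1]{PV12} instead uses the $A$-central state on $q\langle M,e_B\rangle q$ coming from relative amenability of $A$ (here: plain amenability), composed with the normal finite-rank-relative-to-$B$ approximants; $\rm (ii)$ and $\rm (iii)$ then follow from the $A$-centrality of that state and a Cauchy--Schwarz estimate, and normality from the finite-rank structure. (Since \cite{PV12} assumes only relative amenability, which does not yield weak compactness, the original argument cannot be the one you sketch.) Finally, the theorem asserts a net of \emph{normal} states with asymptotic properties; passing to a weak-$*$ limit point at the end, as you propose, would in general destroy normality.
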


Let $H$ be a standard Hilbert space of $N$ with a canonical conjugation $J_N$. 
From now on, as a standard representation of $C_h$, we use $L^2(C_h):=L^2(C_h,\hat{h})=L^2(\mathbb{G})\otimes L^2(\mathbb{R})$, where $\hat{h}$ is the dual weight of $h$. 
Then since $H\otimes L^2(C_h)$ is standard for $\cal N$ with a canonical conjugation ${\cal J}:=J_N\otimes J_{C_h}$, there exists a unique net $(\xi_i)_i$ of unit vectors in the positive cone of $\pi(q){\cal J}\pi(q){\cal J}(H\otimes L^2(C_h))$ such that $\omega_i(\pi(q)x\pi(q))=\langle x \xi_i | \xi_i \rangle$ for  $x\in {\cal N}$. Then  conditions on $\omega_i$ are translated to the following conditions: 
\begin{itemize}
	\item[$\rm (i)'$] $\langle \pi(x) \xi_i | \xi_i \rangle\rightarrow \tau(qxq)$ for any $x\in M$;
	\item[$\rm (ii)'$] $\|\pi(a)\theta(\bar a)\xi_i -\xi_i\|\rightarrow 0$ for any $a\in {\cal U}(A)$;
	\item[$\rm (iii)'$] $\|\mathrm{Ad}(\pi(u)\theta(\bar{u}))\xi_i -\xi_i\| \rightarrow 0$ for any $u \in {\cal N}_{qMq}(A)$. 
\end{itemize}
Here $\mathrm{Ad}(x)\xi_i:=x{\cal J}x{\cal J}\xi_i$. To see $\rm (iii)'$, we need a generalized Powers--St$\rm \o$rmer inequality (e.g.\ $\cite[\textrm{Theorem IX.1.2.(iii)}]{Ta2}$).

The following lemma is a very similar statement to that in $\cite[\textrm{Subsection 3.5 (case 2)}]{PV12}$. Since we treat quantum groups and moreover our object $C_h$ is semifinite and twisted (not a tensor product), we need more careful treatment. So here we give a complete proof of the lemma.
\begin{Lem}\label{case2}
Let $(\xi_i)_i$ be as above. Assume $A\not\preceq_{M} B\otimes L\mathbb{R}$.
Then we have
\begin{equation*}
\limsup_i\| (1_{\mathbb{B}(H)}\otimes x \otimes 1_{L\mathbb{R}})\xi_i\|=0 \quad \textrm{for any } x \in \mathbb{K}(L^2(\mathbb{G})).
\end{equation*}
\end{Lem}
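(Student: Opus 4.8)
The plan is to prove the contrapositive: if the conclusion fails, then $A \preceq_M B \otimes L\mathbb{R}$. Concretely, suppose that for some $x \in \mathbb{K}(L^2(\mathbb{G}))$ and along a subnet we have $\|(1_{\mathbb{B}(H)} \otimes x \otimes 1_{L\mathbb{R}})\xi_i\| \geq \delta > 0$. We may assume $x$ is a finite-rank projection, hence (approximating by the spectral projections of the central support of $\mathbb{V}$, i.e.\ the projections $p_F := \sum_{x \in F} p_x$ for finite $F \subset \mathrm{Irred}(\mathbb{G})$) that $x$ is a projection of the form coming from a finite subset of irreducibles, so $x \leq 1_{L^2(\mathbb{G})}$ is $\mathrm{Tr}$-finite when viewed inside $C_h$. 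The key point is that $e := 1_{\mathbb{B}(H)} \otimes x \otimes 1_{L\mathbb{R}}$ should be read as a projection that, together with $\pi$ and $\theta$, witnesses a corner of $\mathbb{B}(H \otimes L^2(C_h))$ of \emph{finite} trace relative to the semifinite structure carried by $\theta(P^{\mathrm{op}})' \cap \mathcal{N}$, so that $e \xi_i$ lives in a tracial-type bimodule.

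\textbf{Key steps.} First I would set up the intertwining criterion: using the vectors $\xi_i$, which implement the states $\omega_i$ with the almost-invariance properties $\mathrm{(ii)}'$ and $\mathrm{(iii)}'$, I would define, for a fixed finite-rank $x$, the positive functionals $\eta \mapsto \langle \pi(a)\theta(\bar a)\, e\xi_i \mid e\xi_i\rangle$ on $\mathcal{U}(A)$ and show that a weak-$*$ limit point gives an $A$-central positive functional on a suitable corner algebra, whose restriction to $A$ is bounded below by $\delta^2$ (using $\mathrm{(ii)}'$, since $\pi(a)\theta(\bar a)$ asymptotically fixes $\xi_i$, hence asymptotically commutes with $e$ up to the error $\|[e,\pi(a)\theta(\bar a)]\xi_i\|$, which I must control). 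The second step is to identify the corner: the vector $e\xi_i$ sits in $e(H \otimes L^2(C_h))$, on which $\pi(B \otimes 1)$ and $\theta(P^{\mathrm{op}})$ act; the algebra generated, compressed by $e$, is (up to stabilization) of the form $(B \otimes L\mathbb{R}) \otimes \mathbb{B}(\text{finite-dim})$ acting on a finite-trace space, because cutting the $L^2(\mathbb{G})$-variable down by the finite-rank $x$ kills the genuinely ``quantum-group-infinite'' directions and leaves only the abelian $L\mathbb{R}$ part of $C_h$ tensored with a matrix algebra. The third step is then exactly Popa's intertwining theorem (Theorem \ref{popa embed2}, the semifinite version, or Theorem \ref{popa embed}): the existence of such an $A$-central state on $e\mathbb{B}(\cdots)e \supset \langle M, e_{B \otimes L\mathbb{R}}\rangle$-corner not vanishing on $A$ is precisely the negation of condition $\mathrm{(ii)}$ there, yielding $A \preceq_M B \otimes L\mathbb{R}$, a contradiction.

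\textbf{Main obstacle.} The delicate part is \emph{commuting $e$ past $\pi(a)\theta(\bar a)$}: $e = 1 \otimes x \otimes 1$ acts only on the $L^2(\mathbb{G})$-leg, while $\pi$ is built from $B \otimes C_h$ and $C_h$ itself acts on $L^2(\mathbb{G}) \otimes L^2(\mathbb{R})$ in a way that does not respect the finite-rank cutoff $x$ (the modular/crossed-product structure of $C_h$ mixes the $L^2(\mathbb{G})$ and $L^2(\mathbb{R})$ variables). So $e$ does \emph{not} commute with $\pi(x)$ for general $x \in M$, only approximately on the relevant vectors. The resolution should mirror \cite[Subsection 3.5 (case 2)]{PV12}: one replaces $x$ by a net of finite-rank projections $p_F$ that are \emph{asymptotically central} for the approximating states $\omega_i$ — using condition $\mathrm{(i)}'$ together with the fact that $\mathrm{Tr}$ restricted to $C_h$ is the canonical (semifinite) trace and the $p_F$ exhaust $1$ — so that $\|[p_F, \pi(a)]\xi_i\|$ and $\|[p_F,\theta(\bar a)]\xi_i\|$ are negligible in the limit. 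The twisted, non-tensor structure of $C_h$ (as opposed to the $L\mathbb{R} \otimes B$ of \cite{PV12}) means I must carry the modular operator $\Delta_h^{it}$ through all these commutator estimates explicitly, checking that $\mathrm{Ad}\,\Delta_h^{it}$ preserves each $p_F$ (it does, since $p_x$ are central in $\ell^\infty(\hat{\mathbb{G}})$ and $\Delta_h^{it}$ respects the isotypic decomposition); this is the bookkeeping that makes the lemma genuinely require a ``complete proof'' rather than a citation.
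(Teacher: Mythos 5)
Your overall plan --- argue by contradiction from a finite\nobreakdash-rank cutoff $1\otimes P_{\mathcal F}\otimes 1$ retaining mass $\delta$, exploit conditions $\rm (i)'$--$\rm (iii)'$, and bring in Popa's criterion for $A\not\preceq_M B\otimes L\mathbb{R}$ --- starts in the right place, and your observation that the $P_{\mathcal F}$ commute with $\Delta_h^{it}$ is genuinely used in the paper. But the mechanism you propose has a real gap, for two reasons. First, a weak-$*$ limit of the vector states $\langle\,\cdot\,e\xi_i\mid e\xi_i\rangle$ is not normal, and Popa's intertwining theorem requires a normal $A$-central positive functional on the basic construction (equivalently a nonzero $A$-central positive element of finite trace); a merely $A$-central state on a corner of $\langle M,e_{B\otimes L\mathbb{R}}\rangle$ is the definition of \emph{relative amenability}, not of $A\preceq_M B\otimes L\mathbb{R}$, and the "negation of condition (ii)" in the intertwining theorem is a statement about nets of unitaries and $E_{B\otimes L\mathbb{R}}$, not about central states. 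Moreover the algebra acting on $H\otimes L^2(C_h)$ is $\mathcal N=N\otimes C_h$ with $N$ generated by $\pi(B\otimes 1)$ and $\theta(P^{\rm op})$ on the auxiliary space $H$, which the cutoff $e$ does not touch, so the compression by $e$ is not a finite corner of $\langle M,e_{B\otimes L\mathbb{R}}\rangle$ as you claim. Second, and more fundamentally, your proposed resolution of the "main obstacle" --- making the $p_F$ asymptotically central for $\pi(\mathcal U(A))$ on the $\xi_i$ --- is not available: under $A\not\preceq_M B\otimes L\mathbb{R}$ one can choose $a\in\mathcal U(A)$ with arbitrarily small Fourier mass on any prescribed finite set of irreducibles (this is claim 4 of the paper's proof), and for such $a$ the vector $\pi(a)(1\otimes P_{\mathcal F}\otimes r)\xi_i$ is almost orthogonal to the range of $1\otimes P_{\mathcal F}\otimes 1$, so $\|[1\otimes P_{\mathcal F}\otimes 1,\pi(a)]\xi_i\|$ stays bounded below by roughly $\delta$. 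The non-centrality of the cutoffs is not a technical nuisance to be removed; it is the engine of the proof.

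What the paper actually does (the case-2 "mass doubling" argument of Popa--Vaes, not the construction of a central state): assuming $\liminf_i\|(1\otimes P_{\mathcal F}\otimes r)\xi_i\|>\delta$ for some finite $\mathcal F$ and some $\mathrm{Tr}$-finite $r\in L\mathbb{R}$, it takes the unitaries $a$ furnished by $A\not\preceq_M B\otimes L\mathbb{R}$, approximates $a$ in the $2$-norms $\|\cdot\|_{2,\mathrm{Tr}_M\circ\Phi_u}$ built from the compression maps $\Phi_u$ by an element $v$ whose Fourier support $\mathcal S$ is disjoint from $\mathcal F\bar{\mathcal F}$, and observes that $\theta(\bar a)\pi(v)(1\otimes P_{\mathcal F}\otimes r)\xi_i$ both approximates $\xi_i$ (by $\rm (ii)'$ and claim 3) and lies in the range of $1\otimes P_{\mathcal{SF}}\otimes 1$ with $\mathcal{SF}\cap\mathcal F=\emptyset$ (here the $\Delta_h^{it}$-invariance of the $P_{\mathcal F}$ enters). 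Hence mass at least $\delta$ also sits on $P_{\mathcal{SF}}$, so $\mathcal F_1:=\mathcal F\cup\mathcal{SF}$ carries mass $>2^{1/2}\delta$; iterating contradicts $\|\xi_i\|=1$. This doubling step is the idea missing from your write-up.
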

\begin{proof}
Suppose by contradiction that there exist $\delta>0$ and a finite subset ${\cal F}\subset \mathrm{Irred}(\mathbb{G})$ such that 
\begin{equation*}
\limsup_i\| (1_{\mathbb{B}(H)}\otimes P_{\cal F} \otimes 1_{L\mathbb{R}})\xi_i\| >\delta,
\end{equation*}
where $P_{\cal F}$ is the orthogonal projection onto $\sum_{x\in {\cal F}}H_x \otimes H_{\bar x}$.
Replacing with a subnet of $(\xi_i)_i$, we may assume that 
\begin{equation*}
\liminf_i\| (1_{\mathbb{B}(H)}\otimes P_{\cal F} \otimes 1_{L\mathbb{R}})\xi_i\| >\delta.
\end{equation*}
Our goal is to find a finite set ${\cal F}_1\subset \mathrm{Irred}(\mathbb{G})$ and a subnet of $(\xi_i)_i$ which satisfy  
\begin{equation*}
\liminf_i\| (1_{\mathbb{B}(H)}\otimes P_{{\cal F}_1} \otimes 1_{L\mathbb{R}})\xi_i\| >2^{1/2}\delta.
\end{equation*}
Then repeating this argument, we get a contradiction since 
\begin{equation*}
1\geq\liminf_i\| (1_{\mathbb{B}(H)}\otimes P_{{\cal F}_k} \otimes 1_{L\mathbb{R}})\xi_i\| >2^{k/2}\delta \quad (k\in \mathbb{N}).
\end{equation*}

\noindent
\textit{{\bf claim 1.} There exists a $\mathrm{Tr}$-finite projection $r\in L\mathbb{R}$ and a subnet of $(\xi_i)_i$ such that  }
\begin{equation*}
\liminf_i\| (1_{\mathbb{B}(H)}\otimes P_{\cal F} \otimes r)\xi_i\| >\delta.
\end{equation*}
\begin{proof}[\bf proof of claim 1]
Define a state on $\mathbb{B}(H\otimes L^2(C_h))$ by $\Omega(X):=\mathrm{Lim}_i\langle X\xi_i | \xi_i\rangle$. Then the condition $\rm (i)'$ of $(\xi_i)_i$ says that $\Omega(\pi(x))=\tau(qxq)$ for $x\in M$. Let $r_j$ be a net of $\mathrm{Tr}$-finite projections in $L\mathbb{R}$ which converges to 1 strongly. Then we have 
\begin{eqnarray*}
|\Omega((1\otimes P_{\cal F}\otimes 1) \pi(1-r_j))|^2&\leq&\Omega((1\otimes P_{\cal F}\otimes 1))\Omega(\pi(1-r_j)) \\
&=&\Omega((1\otimes P_{\cal F}\otimes 1))\tau(q(1-r_j)q) \rightarrow 0 \quad (j\rightarrow \infty).
\end{eqnarray*}
This implies that
\begin{eqnarray*}
0&\leq&
\mathrm{Lim}_i\| (1\otimes P_{\cal F} \otimes 1)\xi_i\| - \mathrm{Lim}_i\| (1\otimes P_{\cal F} \otimes r_j)\xi_i\| \\
&=&\Omega(1\otimes P_{\cal F} \otimes 1) - \Omega( (1\otimes P_{\cal F} \otimes 1)\pi(r_j))\\
&=&\Omega((1\otimes P_{\cal F}\otimes 1) \pi(1-r_j)) \rightarrow 0 \quad (j\rightarrow \infty)
\end{eqnarray*}
Hence we can find a $\mathrm{Tr}$-finite projection $r\in L\mathbb{R}$ such that  $\mathrm{Lim}_i\| (1\otimes P_{\cal F} \otimes r)\xi_i\|>\delta$. Finally by taking a subnet of $(\xi_i)_i$, we have $\liminf_i\| (1\otimes P_{\cal F} \otimes r)\xi_i\|>\delta$.
\end{proof}

We fix a basis $\{u_{i,j}^x \}_{i,j}^x (=:X)$ of the dense Hopf $*$-algebra of $C(\mathbb{G})$ and use the notation 
\begin{eqnarray*}
&&\mathrm{Irred}(\mathbb{G})_x:=\{ u_{i,j}^x\in  X \mid i,j=1,\ldots,n_x\} \quad (x\in \mathrm{Irred}(\mathbb{G})),\\
&&\mathrm{Irred}(\mathbb{G})_{\cal E}:=\cup_{x\in{}\cal E}\mathrm{Irred}(\mathbb{G})_x \quad ({\cal E} \subset \mathrm{Irred}(\mathbb{G})).
\end{eqnarray*}
We may assume that each $\hat{u}_{i,j}^x\in L^2(\mathbb{G})$ is an eigenvector of the modular operator of $h$, namely, for any $u_{i,j}^x$ there exists $\lambda>0$ such that  $\Delta_h^{it}\hat{u}_{i,j}^x=\lambda^{it}\hat{u}_{i,j}^x$ $(t\in \mathbb{R})$. 
In this case we have a formula $\sigma_t^h(u_{i,j}^x)=\lambda^{it}u_{i,j}^x$ and hence $\sigma_t^h(u_{i,j}^{x*} a u_{i,j}^{x})=u_{i,j}^{x*} \sigma_t^h(a) u_{i,j}^{x}$ for any $a\in L^\infty(\mathbb{G})$.

Let $P_e$ be the orthogonal projection from $L^2(\mathbb{G})$ onto $\mathbb{C}\hat 1$.
For any $u\in X$, consider a compression map 
\begin{equation*}
\Phi_u(x):=h(u^*u)^{-1}(1\otimes P_eu^* \otimes 1)\pi(x)(1\otimes uP_e \otimes 1)
\end{equation*}
for $x\in M$, which gives a normal map from $M$ into $B\otimes \mathbb{C}P_e\otimes\mathbb{B}(L^2(\mathbb{R}))\simeq B\otimes \mathbb{B}(L^2(\mathbb{R}))$.

\noindent
\textit{{\bf claim 2.} 
For any $u\in X$, we have}
\begin{equation*}
\Phi_{u}(b\otimes af)= b\otimes h(u^*u)^{-1}h(u^{*}au)f \quad (b\in B, a\in L^\infty(\mathbb{G}), f\in L\mathbb{R}).
\end{equation*}
\textit{In particular, $\Phi_u$ is a normal conditional expectation from $M$ onto $B\otimes L\mathbb{R}$.}
\begin{proof}[\bf proof of claim 2]
Assume $B=\mathbb{C}$ for simplicity. Recall that any element $a\in L^\infty(\mathbb{G})$ in the continuous core is of the form $a=\int\sigma_{-t}^h(a)\otimes e_t \cdot dt$, which means $(a\xi)(s)=\sigma_{-s}^h(a)\xi(s)$ for $\xi \in L^2(\mathbb{G})\otimes L^2(\mathbb{R})$ and $s\in \mathbb{R}$. 
Hence a simple calculation shows that for any $a\in L^\infty(\mathbb{G})$,
\begin{eqnarray*}
h(u^*u)\Phi_u(a)&=&(P_eu^* \otimes 1)\int\sigma_{-t}^h(a)\otimes e_t \cdot dt (uP_e \otimes 1)\\
&=&\int P_eu^*\sigma_{-t}^h(a)uP_e\otimes e_t \cdot dt\\
&=&\int h(u^*au)P_e\otimes e_t \cdot dt= h(u^*au)P_e\otimes 1,
\end{eqnarray*}
where we used $P_eu^*\sigma_{-t}^h(a)uP_e=h(u^*\sigma_{-t}^h(a)u)P_e=h(\sigma_{-t}^h(u^*au))P_e=h(u^*au)P_e$. Thus $\Phi_u$ satisfies our desired condition.
\end{proof}

\noindent
\textit{{\bf claim 3.} For any $u\in X$ and $x\in M$, we have}
\begin{equation*}
\limsup_i\|\pi(x)(1\otimes P_u\otimes r)\xi_i\|\leq h(u^*u)^{-1/2} \|x(1\otimes r)\|_{2,\mathrm{Tr}_M\circ\Phi_u},
\end{equation*}
\textit{where $P_u$ is the orthogonal projection from $L^2(\mathbb{G})$ onto $\mathbb{C}\hat{u}$.}
\begin{proof}[\bf proof of claim 3]
This follows from a direct calculation. Since $P_u=h(u^*u)^{-1} uP_e u^*$, we have
\begin{eqnarray*}
&&h(u^*u)\|(\pi(x)(1\otimes P_u\otimes r)\xi_i\|^2\\
&=&h(u^*u)\langle (1\otimes P_u \otimes 1)\pi(\tilde{x}^*\tilde{x}) (1\otimes P_u \otimes 1)\xi_i | \xi_i\rangle \qquad (\tilde x := x(1\otimes r))\\
&=&h(u^*u)^{-1}\langle (1\otimes P_eu^* \otimes 1)\pi(\tilde{x}^*\tilde{x}) (1\otimes uP_e \otimes 1)\tilde{\xi}_i | \tilde{\xi}_i\rangle \qquad (\tilde{\xi}_i :=(1\otimes u^* \otimes 1) \xi_i)\\
&=&\langle (1\otimes P_e \otimes 1)\pi(\Phi_u(\tilde{x}^*\tilde{x})) (1\otimes P_e \otimes 1)\tilde{\xi}_i | \tilde{\xi}_i\rangle\\
&=&\| \pi(y) (1\otimes P_eu^* \otimes 1)\xi_i \|^2 \qquad (y:=\Phi_u(\tilde{x}^*\tilde{x})^{1/2})\\
&\leq& \| \pi(y) \xi_i \|^2 \qquad (\textrm{since $\pi(y)$ and $(1\otimes P_eu^* \otimes 1$) commute})\\
&\rightarrow& \mathrm{Tr}_M(qy^*yq) \leq \mathrm{Tr}_M(y^*y) = \mathrm{Tr}_M(\Phi_u(\tilde{x}^*\tilde{x}))= \mathrm{Tr}_M\circ\Phi_u((1\otimes r)x^*x(1\otimes r)).
\end{eqnarray*}
\end{proof}

\noindent
\textit{{\bf claim 4.} For any $\epsilon>0$ and any finite subset ${\cal E}\subset \mathrm{Irred}(\mathbb{G})$, there exist $a\in {\cal U}(A)$ and $v\in M$ such that  
\begin{itemize}
	\item $v$ is a finite sum of elements of the form $b\otimes u f$, where $b\in B$, $f\in L\mathbb{R}$ and $u\in X \setminus \mathrm{Irred}(\mathrm{G})_{\cal E}$;
	\item $h(u^*u)^{-1/2}\|(a-v)(1\otimes r)\|_{2,\mathrm{Tr}_M\circ\Phi_u}<\epsilon$ for any $u\in\mathrm{Irred}(\mathrm{G})_{\cal F}$.
\end{itemize}
}
\begin{proof}[\bf proof of claim 4]
Since $A\not\preceq_{{}_e M_e}B\otimes L\mathbb{R}r$, where $e:=q\vee r$, for any $\epsilon>0$ and any finite subset ${\cal E}\subset \mathrm{Irred}(\mathbb{G})$, there exist $a\in {\cal U}(A)$ s.t\ $\|E_{B\otimes L\mathbb{R}}((1\otimes r)(1\otimes u^*)a(1\otimes r))\|_{2,\mathrm{Tr}_M}<\epsilon$ for any $u\in \mathrm{Irred}(\mathrm{G})_{\cal E}$.

Since the linear span of $\{b\otimes u\lambda_t| b\in B, t\in\mathbb{R},u\in X\}$ is strongly dense in $M$, we can find a bounded net $(z_j)$ of elements in this linear span which converges to $a$ in the strong topology. 
Hence we can find $z$ in the linear span such that  $\|(a-z)(1\otimes r)\|_{2,\mathrm{Tr}_M}$ and $\|(a-z)(1\otimes r)\|_{2,\mathrm{Tr}_M\circ\Phi_u}$ $(u\in\mathrm{Irred}(\mathrm{G})_{\cal F})$ are very small. 
In this case we may assume that $\|E_{B\otimes L\mathbb{R}}((1\otimes r)(1\otimes u^*)z(1\otimes r))\|_{2,\mathrm{Tr}_M}<\epsilon$ for any $u\in \mathrm{Irred}(\mathrm{G})_{\cal E}$.

Write $z=\sum_{\rm finite} b_{i,j}^x\otimes u_{i,j}^x f_{i,j}^x$. Then for any $y\in{\cal E}$ the above inequality implies 
\begin{eqnarray*}
\epsilon&>&\|E_{B\otimes L\mathbb{R}}((1\otimes r)(1\otimes u^{y*}_{k,l})z(1\otimes r))\|_{2,\mathrm{Tr}_M}\\
&=&\|\sum_{\rm finite} (1\otimes r)(b_{i,j}^x\otimes h(u^{y*}_{k,l}u_{i,j}^x)f_{i,j}^x)(1\otimes r)\|_{2,\mathrm{Tr}_M}\\
&=&\sum_{\rm finite}\delta_{x,y}\delta_{i,k}\delta_{j,l}h(u^{y*}_{k,l}u_{i,j}^x)\|b_{i,j}^x\otimes f_{i,j}^xr\|_{2,\mathrm{Tr}_M}
=\|u^{y}_{k,l}\|^2_{2,h}\|b_{k,l}^y\otimes f_{k,l}^yr\|_{2,\mathrm{Tr}_M}.
\end{eqnarray*}
Hence if we write $z=\sum_{x\in {\cal E}}b_{i,j}^x\otimes u_{i,j}^x f_{i,j}^x + \sum_{x\not\in {\cal E}}b_{i,j}^x\otimes u_{i,j}^x f_{i,j}^x $, and say $v$ for the second sum, then we have 
\begin{eqnarray*}
\|(z-v)(1\otimes r)\|_{2,\mathrm{Tr}_M\circ\Phi_u}
&\leq&\sum_{x\in{\cal E},i,j}\| b_{i,j}^x\otimes u_{i,j}^x f_{i,j}^xr \|_{2,\mathrm{Tr}_M\circ\Phi_u} \\
&\leq&\sum_{x\in{\cal E},i,j}\|u_{i,j}^x\| \| b_{i,j}^x\otimes f_{i,j}^xr \|_{2,\mathrm{Tr}_M\circ\Phi_u} \\
&\leq&\sum_{x\in{\cal E},i,j}\| b_{i,j}^x\otimes f_{i,j}^xr \|_{2,\mathrm{Tr}_M} \\
&<&C({\cal E})\cdot\epsilon,
\end{eqnarray*}
for any $u\in\mathrm{Irred}(\mathrm{G})_{\cal F}$, where $C({\cal E})>0$ is a constant which depends only on ${\cal E}$. Thus this $v$ is our desired one.
\end{proof}

Now we return to the proof. Since $1\otimes P_{\cal F}\otimes r$ is a projection, we have
\begin{eqnarray*}
\limsup_i\|\xi_i-(1\otimes P_{\cal F}\otimes r)\xi_i\|^2
&=&\limsup_i(\|\xi_i\|^2-\|(1\otimes P_{\cal F}\otimes r)\xi_i\|^2)\\
&=&1-\liminf_i\|(1\otimes P_{\cal F}\otimes r)\xi_i\|^2<1-\delta^2.
\end{eqnarray*}
So there exists $\epsilon>0$ such that  $\limsup_i\|\xi_i-(1\otimes P_{\cal F}\otimes r)\xi_i\|<(1-\delta^2)^{1/2}-\epsilon$. We apply claim 4 to $(\sum_{x\in{\cal F}}\mathrm{dim}(H_x)^2)^{-1}\epsilon$ and ${\cal E}:={\cal F}\bar{\cal F}$ $(=\{z| z\in x\otimes \bar{y} \textrm{ for some }x,y\in{\cal F}\})$, and get $a$ and $v$. Then we have
\begin{eqnarray*}
&&\limsup_i\|\xi_i-\theta(\bar{a})\pi(v)(1\otimes P_{\cal F}\otimes r)\xi_i\|\\
&\leq&\limsup_i\|\xi_i-\theta(\bar{a})\pi(a)(1\otimes P_{\cal F}\otimes r)\xi_i\|+\limsup_i\|\theta(\bar{a})\pi(a-v)(1\otimes P_{\cal F}\otimes r)\xi_i\|\\
&\leq&\limsup_i\|\theta(a^{\rm op})\pi(a^*)\xi_i-(1\otimes P_{\cal F}\otimes r)\xi_i\| + \sum_{u\in \mathrm{Irred}(\mathbb{G})_{\cal F}} h(u^*u)^{-1/2}\|(a-v)(1\otimes r)\|_{2, \mathrm{Tr}_M\circ\Phi_u} \\
&<&\limsup_i\|\xi_i-(1\otimes P_{\cal F}\otimes r)\xi_i\| + \epsilon < (1-\delta^2)^{1/2},
\end{eqnarray*}
where we used condition $\rm (ii)'$ of $(\xi_i)$ and claim 3. 
By the choice of $v$, it is of the form $\sum_{x\in{\cal S},i,j}b_{i,j}^x\otimes u_{i,j}^x f_{i,j}^x$ for some finite set ${\cal S}\subset \mathrm{Irred}(\mathbb{G})$ with ${\cal S}\cap{\cal F}\bar{\cal F}=\emptyset$. Note that this means ${\cal SF}\cap{\cal F}=\emptyset$.
The vector $\theta(\bar{a})\pi(v)(1\otimes P_{\cal F}\otimes r)\xi_i$ is contained in the range of $1\otimes P_{\cal SF}\otimes 1$. This is because the modular operator $\Delta_h^{it}$ commutes with $P_{\cal SF}$ and $P_{\cal F}$ and hence 
\begin{equation*}
(1\otimes P_{{\cal SF}}\otimes 1)\pi(v)(1\otimes P_{\cal F}\otimes 1)=\pi(v)(1\otimes P_{\cal F}\otimes 1).
\end{equation*} 
Then we have
\begin{eqnarray*}
1-\delta^2
&>&\limsup_i\|\xi_i-\theta(\bar{a})\pi(v)(1\otimes P_{\cal F}\otimes r)\xi_i\|^2 \\
&\geq&\limsup_i\|(1\otimes P_{\cal SF}\otimes 1)^{\perp}\xi_i\|^2\\
&=&1-\liminf_i\|(1\otimes P_{\cal SF}\otimes 1)\xi_i\|^2.
\end{eqnarray*}
This means $\delta<\liminf_i\|(1\otimes P_{\cal SF}\otimes 1)\xi_i\|$. 
Finally put ${\cal F}_1:={\cal F}\cup{\cal SF}$. Then since ${\cal SF}\cap{\cal F}=\emptyset$, we get
\begin{equation*}
\liminf_i\|(1\otimes P_{{\cal F}_1}\otimes 1)\xi_i\|^2
\geq\liminf_i\|(1\otimes P_{\cal F}\otimes 1)\xi_i\|^2 + \liminf_i\|(1\otimes P_{\cal SF}\otimes 1)\xi_i\|^2>2\delta^2.
\end{equation*}
Thus ${\cal F}_1$ is our desired one and we can end the proof.
\end{proof}

Now we start the proof. We follow that of $\cite[\textrm{Subsection 3.4 (case 1)}]{PV12}$. Since this part of the proof does not rely on the structure of group von Neumann algebras, we need only slight modifications, which were already observed in $\cite{Is12_2}$. Hence here we give a rough sketch of the proof.
We use the following notation which is used in $\cite{PV12}$.
\begin{eqnarray*}
&&{\cal D}:=M\odot M^{\rm op} \odot P^{\rm op} \odot P \supset (C_{\rm red}(\mathbb{G})\rtimes_{\rm r}\mathbb{R})\odot (C_{\rm red}(\mathbb{G})\rtimes_{\rm r}\mathbb{R})^{\rm op} \odot P^{\rm op} \odot P =:{\cal D}_0,\\
&&\Psi\colon {\cal D}\rightarrow \mathbb{B}(H\otimes L^2(C_h)\otimes L^2(C_h)), \quad 
\Theta \colon {\cal D}\rightarrow \mathbb{B}(H\otimes L^2(C_h)),\\
&&\Psi((b_1\otimes x_1) \otimes (b_2\otimes x_2)^{\rm op}\otimes y^{\rm op}\otimes z)=b_1J_Nb_2^*J_Ny^{\rm op}J\bar{z}J\otimes x_1\otimes x_2^{\rm op}, \\
&&\Theta((b_1\otimes x_1) \otimes (b_2\otimes x_2)^{\rm op}\otimes y^{\rm op}\otimes z)=b_1J_Nb_2^*J_Ny^{\rm op}J\bar{z}J\otimes x_1x_2^{\rm op}\\
&&\phantom{\Theta((b_1\otimes x_1) \otimes (b_2\otimes x_2)^{\rm op}\otimes y^{\rm op}\otimes z)}=\pi(b_1\otimes x_1){\cal J}\pi(b_2\otimes x_2)^*{\cal J}\theta(y^{\rm op}){\cal J}\theta(\bar{z}){\cal J}.
\end{eqnarray*}

\begin{proof}[\bf Proof of Theorem \ref{B}]
Define a state on $\mathbb{B}(H\otimes L^2(C_h))$ by $\Omega_1(X):=\mathrm{Lim}_i\langle X\xi_i|\xi_i\rangle$. Our condition $\rm (AOC)^+$, together with Lemma \ref{case2}, implies that $|\Omega_1(\Theta(X))|\leq \|\Psi(X)\|$ for any $X\in{\cal D}_0$. 
We extend this inequality on $\cal D$ by using an approximation identity of $C_h$, which take vales in $C_{\rm red}(\mathbb{G})\rtimes_{\rm r}\mathbb{R}$ (such a net exists, see $\cite[\textrm{Lemma 2.3.1}]{Is12_2}$).
Then a new functional $\Omega_2$ on $C^*(\Psi({\cal D}))$ is defined by $\Omega_2(\Psi(X)):=\Omega_1(\Theta(X))$ $(X\in{\cal D})$.
 Its Hahn--Banach extension on $\mathbb{B}(H\otimes L^2(C_h)\otimes L^2(C_h))$ restricts to a $P$-central state on $q(B\otimes \mathbb{B}(C_h))q\otimes \mathbb{C}$.  More precisely we have a $P$-central state on $\mathbb{B}(qL^2(M))\cap(B^{\rm op})'(=q(B\otimes \mathbb{B}(C_h))q)$ which restricts to the trace $\tau$ on $qMq$.
\end{proof}

\subsection{\bf Proofs of corollaries}
For the Kac type case, the same proofs as in the group case work. So we see only the proof of Corollary \ref{B}. 

Let $\mathbb{G}$ be a compact quantum group in the statement of the corollary, $h$ the Haar state of $\mathbb{G}$, and $(B,\tau_B)$ be a tracial von Neumann algebra. 
Write $M:=B\otimes L^\infty(\mathbb{G})$. Let $(A,\tau_A)$ be an amenable tracial von Neumann subalgebra in $M$ with expectation $E_A$. 
Then since modular actions of $\tau_A$ and $\tau_A\circ E_A$ has the relation $\sigma^{\tau_A}=\sigma^{\tau_A\circ E_A}|_A$ (see the proof of $\cite[\rm{Theorem\ IX.4.2}]{Ta2}$), 
we have an inclusion $A\otimes L\mathbb{R}=C_{\tau_A}(A)\subset C_{\tau_A\circ E_A}(M)$ with a faithful normal conditional expectation $\tilde{E}_A$ given by $\tilde{E}_A(x\lambda_t)=E_A(x)\lambda_t$ for $x\in M$ and $t\in \mathbb{R}$. 
Since continuous cores are canonically isomorphic with each other, there is a canonical isomorphism from $C_{\tau_A\circ E_A}(M)$ onto $C_{\tau_B\otimes h}(M)=B\otimes C_h(L^\infty(\mathbb{G}))$ $(:=\tilde{M})$. We denote by $\tilde{A}$ the image of $C_{\tau_A}(A)$ in $\tilde{M}$. 
Put $\tilde{P}:={\cal N}_{\tilde{M}}(\tilde{A})''$. 
Note that there is a faithful normal conditional expectation $E_{\tilde{P}}$ from $\tilde{M}$ onto $\tilde{P}$, since $\tilde{A}$ is an image of an expectation (use $\cite[\rm{Theorem\ IX.4.2}]{Ta2}$).
\begin{Lem}
Under the setting above, for any projection $z\in{\cal Z}(A)\cap (1_B\otimes L^\infty(\mathbb{G}))$ (possibly $z=1$) we have either 
\begin{itemize}
	\item[$\rm (i)$] $Az\preceq_M B$;
	\item[$\rm (ii)$] there exists a conditional expectation from $z(B\otimes \mathbb{B}(L^2(C_h)))z$ onto $z\tilde{P}z$, where we regard $z\in 1_B\otimes C_h$ by the canonical inclusion $L^\infty(\mathbb{G})\subset C_h$.
\end{itemize}
\end{Lem}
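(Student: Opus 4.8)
The statement is a direct application of Theorem~\ref{B} together with the semifinite intertwining machinery (Theorem~\ref{popa embed2}), so the plan is to reduce the lemma to an invocation of Theorem~\ref{B} for the amenable subalgebra $\tilde A z\subset z\tilde M z$. First I would fix a central projection $z\in{\cal Z}(A)\cap(1_B\otimes L^\infty(\mathbb{G}))$ and observe that $z$ is $\mathrm{Tr}_{\tilde M}$-finite: indeed $z$ lies in $1_B\otimes C_h$ via the canonical inclusion, and since $1_B\otimes 1\in C_h(L^\infty(\mathbb{G}))$ is $\mathrm{Tr}$-finite (the Haar state is a state, so $\mathrm{Tr}(1)<\infty$ along the usual normalization of the core trace coming from a state), its subprojection $z$ is $\mathrm{Tr}_{\tilde M}$-finite. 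This lets us apply Theorem~\ref{B} with the $\mathrm{Tr}_{\tilde M}$-finite projection $q:=z$ and the amenable subalgebra $A':=\tilde A z\subset z\tilde M z$; note $\tilde A$ (hence $\tilde A z$) is amenable because it is isomorphic to $C_{\tau_A}(A)=A\bar\otimes L\mathbb{R}$ and $A$ is assumed amenable, so its core is amenable.

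Theorem~\ref{B} then yields one of two alternatives. In case~(i) of Theorem~\ref{B} we get $\tilde A z\preceq_{\tilde M} L\mathbb{R}\otimes B$ (here the roles of the tensor factors are arranged so that $L\mathbb{R}\otimes B$ is the ``$L\mathbb{R}\otimes B$'' of the theorem, identified inside $\tilde M=B\otimes C_h(L^\infty(\mathbb{G}))$ via $C_h=L^\infty(\mathbb{G})\rtimes_{\sigma^h}\mathbb{R}\supset L\mathbb{R}$). Translating this back: by the explicit form of $\tilde A=C_{\tau_A}(A)$ and the identification $\tilde E_A(x\lambda_t)=E_A(x)\lambda_t$, an intertwiner for $\tilde A z$ into $L\mathbb{R}\otimes B$ restricts, after cutting down by the spectral projection $P_e$ onto $\mathbb{C}\hat 1\otimes L^2(\mathbb{R})$ and averaging away the $L\mathbb{R}$-direction (exactly as in claims~2--3 of Lemma~\ref{case2}, using that the compression maps $\Phi_u$ are conditional expectations onto $B\otimes L\mathbb{R}$), to an intertwiner witnessing $Az\preceq_M B$; I would phrase this via the sequential criterion of Theorem~\ref{popa embed}: a sequence $(w_n)\subset{\cal U}(Az)$ with $E_B(b^*w_na)\to0$ for all $a,b$ lifts to a sequence of unitaries in $\tilde A z$ (keep the $\lambda_0$-component) with $E_{L\mathbb{R}\otimes B}(\,\cdot\,)\to0$, giving the contrapositive. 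This gives alternative~(i) of the lemma.

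In case~(ii) of Theorem~\ref{B}, $L^2(zM)$ — that is, $L^2(z\tilde M)$, with $\tilde M$ semifinite — is left ${\cal N}_{z\tilde Mz}(\tilde Az)''$-amenable as a $z\tilde Mz$-$B$-bimodule. Since $\tilde A=C_{\tau_A}(A)$ is the continuous core of $A$, and using that $A$ is an image of a conditional expectation (so by $\cite[\rm Theorem~IX.4.2]{Ta2}$ its core sits with an expectation inside $\tilde M$, and moreover every normalizer of $A$ in $M$ integrates to a normalizer of $\tilde A$ in $\tilde M$ — the modular flow of $\tau_A\circ E_A$ commutes with $\mathrm{Ad}(u)$ for $u\in{\cal N}_M(A)$), we get ${\cal N}_{z\tilde Mz}(\tilde Az)''\supseteq z\tilde P z$. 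Hence $z\tilde P z$ is relatively amenable, and since $B$ is finite (hence amenable) the relative amenability of $z\tilde Pz$ over $B$ inside the semifinite algebra $z(B\otimes\mathbb{B}(L^2(C_h)))z=\mathbb{B}(zL^2(\tilde M))\cap(B^{\rm op})'$ is, by the last paragraph of Subsection~2.3 (when $B$ is amenable, $\mathbb{B}(K)\cap(B^{\rm op})'$ is amenable and admits an expectation from $\mathbb{B}(K)$, so relative amenability over $B$ becomes amenability), equivalent to plain amenability of $z\tilde Pz$, which in turn produces a conditional expectation from $z(B\otimes\mathbb{B}(L^2(C_h)))z$ onto $z\tilde Pz$. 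That is alternative~(ii).

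\textbf{Main obstacle.} The routine but delicate point is case~(i): matching Theorem~\ref{B}'s $\preceq_{\tilde M}L\mathbb{R}\otimes B$ with the lemma's $\preceq_M B$. The core-to-base transfer requires the ``cut by $P_e$ and average over $\mathbb{R}$'' argument, i.e. reusing the mechanics behind claims~2--4 of Lemma~\ref{case2} (the maps $\Phi_u$ and the $\mathrm{Tr}$-preserving conditional expectations $\mathrm{Tr}(p)^{-1}(\mathrm{id}_B\otimes\mathrm{Tr})$), and one must check the identification of $L\mathbb{R}\otimes B$ inside $\tilde M$ is the expected one under the canonical core isomorphism $C_{\tau_A\circ E_A}(M)\cong C_{\tau_B\otimes h}(M)$. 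Everything else is bookkeeping with $\cite[\rm Theorem~IX.4.2]{Ta2}$ and the relative-amenability dictionary already recorded in Subsection~2.3.
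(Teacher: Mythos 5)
There are two genuine gaps in your proposal. First, your reduction to Theorem~\ref{B} with $q:=z$ rests on the claim that $z\leq 1_B\otimes 1$ is $\mathrm{Tr}_{\tilde M}$-finite ``since the Haar state is a state, so $\mathrm{Tr}(1)<\infty$''. This is false: the canonical trace on the continuous core $C_h=L^\infty(\mathbb{G})\rtimes_{\sigma^h}\mathbb{R}$ is only semifinite and $\mathrm{Tr}(1)=\infty$ (already its restriction to $L\mathbb{R}$ is an infinite weight), so no projection of $1_B\otimes L^\infty(\mathbb{G})\subset 1_B\otimes C_h$ is $\mathrm{Tr}_{\tilde M}$-finite and Theorem~\ref{B} cannot be applied with $q=z$. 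The paper instead assumes $Az\not\preceq_M B$, deduces (via \cite[Proposition 2.10]{BHR12} and the remark after Theorem~\ref{popa embed2}) that $\tilde Azq\not\preceq_{\tilde M}B\otimes L\mathbb{R}$ for \emph{every} $\mathrm{Tr}$-finite $q\in{\cal Z}(\tilde A)$, applies Theorem~\ref{B} to each such corner, and then glues the corners back together using a net $(q_i)_i$ of finite projections increasing to $1$ and the uniform bound $\|\nu(x)\|=\sup_i\|\nu_{q_i}(\tilde q_i x\tilde q_i)\|\leq\|x\|$. This cutting-and-gluing is forced by the semifiniteness and cannot be skipped. (Your treatment of alternative (i) is essentially the contrapositive of the paper's first step and is fine in spirit, but it too must be run on trace-finite corners.)

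Second, and more seriously, your derivation of alternative (ii) invokes ``since $B$ is finite (hence amenable)''. Finiteness does not imply amenability, and in this lemma $B$ is an \emph{arbitrary} tracial von Neumann algebra --- Corollary~B(2) is precisely about arbitrary finite $B$, e.g.\ $B=L\mathbb{F}_2$. Hence the dictionary from Subsection~2.3 that you invoke (relative amenability over an amenable $B$ equals plain amenability) is unavailable, and the conclusion of Theorem~\ref{B} only provides a $q\tilde Pq$-central state on $\mathbb{B}(qL^2(\tilde M))\cap(B^{\rm op})'$ restricting to the trace, which is strictly weaker than the conditional expectation asserted in (ii). Converting that state into an honest expectation is the real content of the paper's proof: one passes through the weak containment ${}_{\tilde M}L^2(\tilde M)_{\tilde P}\prec{}_{\tilde M}L^2(\tilde M)\otimes_B L^2(\tilde M)_{\tilde P}$ via \cite[Proposition 2.4]{PV12}, the boundedness of the multiplication map $\nu$ on $\textrm{$*$-alg}\{\tilde M\otimes_B 1,\,1\otimes_B\tilde P^{\rm op}\}$, Arveson's extension theorem, the multiplicative-domain argument forcing the image of $1\otimes_B(B'\cap\mathbb{B}(L^2(\tilde M)))$ into $\tilde M^{\rm op}$, and finally composition with $E_{\tilde P}^{\rm op}$ and $\mathrm{Ad}(J_{\tilde M})$. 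None of this machinery appears in your proposal, so alternative (ii) is not established.
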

\begin{proof}
Suppose $Az\not\preceq_M B$. Then by the same manner as in $\cite[\textrm{Proposition 2.10}]{BHR12}$, we have $\tilde{A}zq\not\preceq_{\tilde{M}}B\otimes L\mathbb{R}r$ for any projections $q\in {\cal Z}(\tilde{A})$ and $r\in L\mathbb{R}$ with $(\tau_B\otimes\mathrm{Tr})(q)<\infty$ and $\mathrm{Tr}(r)<\infty$. 
By the comment below Theorem \ref{popa embed2}, this means $\tilde{A}zq\not\preceq_{\tilde{M}}B\otimes L\mathbb{R}$ for any projection $q\in {\cal Z}(\tilde{A})$ with $(\tau_B\otimes\mathrm{Tr})(q)<\infty$.
We fix such $q$ and assume $z=1$ for simplicity.

Apply Theorem \ref{B} to them and get that $L^2(q\tilde{M})$ is left ${\cal N}_{q\tilde{M}q}(q\tilde{A}q)''$-amenable as a $q\tilde{M}q$-$B$-bimodule. 
Note that ${\cal N}_{q\tilde{M}q}(q\tilde{A}q)''=q\tilde{P}q$ (e.g.\ $\cite[\rm Lemma\ 2.2]{FSW10}$). 
By $\cite[\textrm{Proposition 2.4}]{PV12}$ this means that ${}_{q\tilde{M}q}L^2(q\tilde{M}q)_{q\tilde{P}q} \prec {}_{q\tilde{M}q}L^2(q\tilde{M})\otimes_B L^2(\tilde{M}q)_{q\tilde{P}q}$. 
Let $\nu_q$ (or $\nu$ for $q=1$) be the following canonical multiplication $*$-homomorphism:
\begin{alignat*}{5}
& \hspace{5em}\mathbb{B}(\tilde{q}K)     && \hspace{1em}\mathbb{B}(qq^{\rm op}L^2(\tilde{M}))\\
& \hspace{6em}\cup                             && \hspace{3.5em}\cup                                              \\
& \textrm{$*$-alg}\{q\tilde{M}q\otimes_B q^{\rm op}, q\otimes_B (q\tilde{P}q)^{\rm op}\} &\quad\xrightarrow{\nu_q}\quad& \textrm{$*$-alg}\{q\tilde{M}q, (q\tilde{P}q)^{\rm op}\}&
\end{alignat*}
where $K:=L^2(\tilde{M})\otimes_B L^2(\tilde{M})$ and $\tilde{q}:=(q\otimes_B1)(1\otimes_B q^{\rm op})\in\mathbb{B}(K)$. The weak containment above means that $\nu_q$ is bounded.
Let $(q_i)_i$ be a net of $(\tau_B\otimes\mathrm{Tr})$-finite projections in ${\cal Z}(\tilde{A})$ which converges to 1 strongly. Then since each $q_i$ satisfies the weak containment above, we have for any $x\in \textrm{$*$-alg}\{\tilde{M}\otimes_B1, 1\otimes_B (\tilde{P})^{\rm op}\}\subset\mathbb{B}(K)$,
\begin{equation*}
\|x\|_{\mathbb{B}(K)}=\sup_i\|\tilde{q_i}x\tilde{q_i}\|_{\mathbb{B}(\tilde{q_i}K)}\geq
\sup_i\|\nu_{q}(\tilde{q_i}x\tilde{q_i})\|_{\mathbb{B}(q_iq_i^{\rm op}L^2(\tilde{M}))}=\|\nu(x)\|_{\mathbb{B}(L^2(\tilde{M}))}.
\end{equation*}
Hence $\nu$ is bounded and we have 
${}_{\tilde{M}}L^2(\tilde{M})_{\tilde{P}} \prec {}_{\tilde{M}}L^2(\tilde{M})\otimes_B L^2(\tilde{M})_{\tilde{P}}$. 
(For a general $z$, we have ${}_{z\tilde{M}z}L^2(z\tilde{M}z)_{z\tilde{P}z} \prec {}_{z\tilde{M}z}L^2(z\tilde{M})\otimes_B L^2(\tilde{M}z)_{z\tilde{P}z}$.)

By Arveson's extension theorem, we extend $\nu$ on $C^*\{\tilde{M}\otimes_B1, 1\otimes_B (B'\cap \mathbb{B}(L^2(\tilde{M})))\}$ as a u.c.p.\ map into $\mathbb{B}(L^2(\tilde{M}))$ and denote it by $\Phi$. 
Then since $\tilde{M}\otimes 1$ is contained in the multiplicative domain of $\Phi$, the image of $1\otimes_B (B'\cap \mathbb{B}(L^2(\tilde{M})))$ is contained in $\tilde{M}'=\tilde{M}^{\rm op}$.
Consider the following composition map:
\begin{equation*}
B^{\rm op}\otimes \mathbb{B}(L^2(C_h))= (B'\cap\mathbb{B}(L^2(\tilde{M})))\simeq1\otimes_B (B'\cap\mathbb{B}(L^2(\tilde{M})))\xrightarrow{\Phi}\tilde{M}^{\rm op}\xrightarrow{E_{\tilde{P}}^{\rm op}} \tilde{P}^{\rm op}. 
\end{equation*}
(For a general $z$, we have $z^{\rm op}(B^{\rm op}\otimes \mathbb{B}(L^2(C_h)))z^{\rm op}\rightarrow z(z\tilde{M}z)^{\rm op}\simeq (z\tilde{M}z)^{\rm op}\rightarrow (z\tilde{P}z)^{\rm op}.$)
Finally composing with $\mathrm{Ad}(J_{\tilde{M}})=\mathrm{Ad}(J_B\otimes J_{C_h})$, we get a conditional expectation from $B\otimes \mathbb{B}(L^2(C_h))$ onto $\tilde{P}$.
\end{proof}

Now assume that $L^\infty(\mathbb{G})$ is non amenable and $A\subset M$ is a Cartan subalgebra. 
Let $w$ be a central projection in $L^\infty(\mathbb{G})$ such that  $L^\infty(\mathbb{G})w$ has no amenable direct summand. Write $z=1_B\otimes w$. Then $z\in A$ since $A$ is maximal abelian. 
Since $\tilde{A}$ is Cartan in $\tilde{M}$ (e.g.\ $\cite[\textrm{Subsection 2.3}]{HR10}$), we have $\tilde{M}=\tilde{P}$. 
The lemma above says that we have either (i) $Az\preceq_MB$ or (ii) there exists a conditional expectation from $B\otimes \mathbb{B}(wL^2(C_h))$ onto $z\tilde{M}z=B\otimes C_hw$. 
If (ii) holds, then composing with $\tau_B\otimes \mathrm{id}_{C_h}$, we get a conditional expectation from $\mathbb{B}(wL^2(C_h))$ onto $C_hw$. This is a contradiction since $C_hw=(L^\infty(\mathbb{G})w)\rtimes \mathbb{R}$ is non amenable. 
If (i) holds, then we have $Az\preceq_{Mz}B\otimes \mathbb{C}w$ since $z\in {\cal Z}(M)$, and hence $(B\otimes \mathbb{C}w)'\cap Mz \preceq_{Mz} A'\cap Mz$ by $\cite[\textrm{Lemma 3.5}]{Va08}$ (this is true for non finite $M$). This means $1_B \otimes L^\infty(\mathbb{G})w\preceq_{Mz} Az$ and hence $L^\infty(\mathbb{G})w$ has an amenable direct summand. Thus we get a contradiction.

Next assume that $B=\mathbb{C}$. Let $N\subset L^\infty(\mathbb{G})(=M)$ be a non-amenable subalgebra with expectation $E_N$ and assume that $A\subset N$ is a Cartan subalgebra with expectation $E_A$. 
Let $z$ be a central projection in $N$ such that  $Nz$ is non-amenable and diffuse. Since $A$ is maximal abelian, $z\in A$ and $Az$ is a Cartan subalgebra in $Nz$.  
Hence $Az$ is diffuse, which means $Az\not\preceq_{M}\mathbb{C}$. The above lemma implies that there exists a conditional expectation from $\mathbb{B}(zL^2(C_h))$ onto $z{\cal N}_{\tilde{M}}(\tilde{A})''z={\cal N}_{z\tilde{M}z}(\tilde{A}z)''$. 
Now ${\cal N}_{z\tilde{M}z}(\tilde{A}z)''$ is non amenable and hence a contradiction. In fact, we have inclusions $C_{\tau_A}(A)z\subset C_{\tau_A\circ E_A}(N)z\subset zC_{\tau_A\circ E_A \circ E_N}(M)z$, and since the first inclusion is Cartan, the normalizer of $\tilde{A}z$ in $z\tilde{M}z$ generates a non-amenable subalgebra.

\section{\bf Further remarks}

\subsection{\bf Continuous and discrete cores}
In $\cite[\textrm{Subsection 5.2}]{Is12_2}$, we discussed semisolidity and non solidity of continuous cores of free quantum group $\rm III_1$ factors. Non solidity of them follow from non amenability of the relative commutant of the diffuse algebra $L\mathbb{R}$, which contains a non-amenable centralizer algebra. 

Let $\mathbb{G}$ be a compact quantum group as in Theorem \ref{C} and assume that $L^\infty(\mathbb{G})$ is a full $\rm III_1$ factor and the Haar state is $\mathrm{Sd}(L^\infty(\mathbb{G}))$-almost periodic. Denote its discrete core by $D(L^\infty(\mathbb{G}))$ (see $\cite{Co74}\cite{Dy94}$). 
Then $qD(L^\infty(\mathbb{G}))q$ is always strongly solid for any trace finite projection $q$, 
since $D(L^\infty(\mathbb{G}))$ is isomorphic to $L^\infty(\mathbb{G})_h\otimes \mathbb{B}(H)$ for some separable Hilbert space $H$ and $L^\infty(\mathbb{G})_h$ is a strongly solid $\rm II_1$ factor (this follows from Theorem \ref{B}). Hence structures of these cores are different. In the subsection, we observe this difference from some viewpoints.

Write as $\Gamma\leq \mathbb{R}^*$ the Sd-invariant of $L^\infty(\mathbb{G})$. 
Then regarding $\mathbb{R}\subset \hat{\Gamma}$, take the crossed product $L^\infty(\mathbb{G})\rtimes \hat{\Gamma}$ by the extended modular action of the Haar state. 
This algebra is an explicit realization of the discrete core, namely, $D(L^\infty(\mathbb{G}))\simeq L^\infty(\mathbb{G})\rtimes \hat{\Gamma}$.
Since this extended action is the modular action on the dense subgroup $\mathbb{R}$, we can easily verify that $L^\infty(\mathbb{G})\rtimes \hat{\Gamma} (=:M)$ satisfies the same condition as condition $\rm (AOC)^+$  replacing $\mathbb{R}$-action with $\hat{\Gamma}$-action.
Under this setting, we can prove Theorem \ref{B}, namely, for any amenable subalgebra $A\subset qMq$ (with trace finite projection $q\in L\hat{\Gamma}$), we have either (i) $A\preceq_{M} L\hat{\Gamma}$ or (ii) ${\cal N}_{qMq}(A)''$ is amenable. 
This obviously implies strong solidity of $qMq$, since all diffuse subalgebra $A$ automatically satisfies $A\not\preceq_{qMq}L\hat{\Gamma}$ (because $L\hat{\Gamma}$ is atomic). 

Roughly speaking, this observation implies that the only obstruction of the solidity of $C_h(L^\infty(\mathbb{G}))(=:C_h)$ is a diffuse subalgebra $L\mathbb{R}$. 
More precisely, $C_h$ is non solid since the subalgebra $L\mathbb{R}$ is diffuse, and $D(L^\infty(\mathbb{G}))$ is strongly solid since the subalgebra $L\hat{\Gamma}$ is atomic. 

The next view point is from property Gamma (or non fullness). It is known that a  $\rm II_1$ factor $M$ has the property Gamma if and only if $C^*\{M,M' \}\cap \mathbb{K}(L^2(M))=0$ $\cite{aaa}$. 
Moreover it is not difficult to see that a von Neumann algebra $M$ satisfies condition (AO) and $C^*\{M,M' \}\cap \mathbb{K}(L^2(M))=0$, then it is amenable. 
Hence any non amenable $\rm II_1$ factor can not have property Gamma and condition (AO) at the same time. 
When $L^\infty(\mathbb{G})$ is a full $\rm III_1$ factor, on the one hand, for any $\mathrm{Tr}$-finite projection $p$, $pC_hp$ is non full, since it admits a non trivial central sequence (use the almost periodicity of $h$). 
Hence we can deduce $C^*\{C_h,C_h' \}\cap \mathbb{K}(L^2(C_h)=0$. In particular $C_h$ (and $pC_hp$ for \textit{any} projection $p$) does not satisfy condition (AO). 
On the other hand, for the discrete core $D(L^\infty(\mathbb{G}))$, $qD(L^\infty(\mathbb{G}))q$ always satisfies condition $\rm (AO)^+$ for any projection $q\in L\hat{\Gamma}\simeq \ell^\infty(\Gamma)$ with finite support. 
This is because $D(L^\infty(\mathbb{G}))$ satisfies condition $\rm (AO)^+$ with respect to the quotient $\mathbb{K}(L^2(\mathbb{G}))\otimes \mathbb{B}(\ell^2(\Gamma))$ and hence $qD(L^\infty(\mathbb{G}))q$ satisfies $\rm (AO)^+$ for $\mathbb{K}(L^2(\mathbb{G}))\otimes q\mathbb{B}(\ell^2(\Gamma))q=\mathbb{K}(L^2(\mathbb{G}))\otimes \mathbb{K}(q\ell^2(\Gamma))$. 
By $\cite[\textrm{Theorem B}]{Is12_2}$, we again get the strong solidity of $qD(L^\infty(\mathbb{G}))q$.
We also get fullness of $qD(L^\infty(\mathbb{G}))q$ (since this is non amenable) for any $q\in \ell^\infty(\Gamma)$ with finite support and hence that of $D(L^\infty(\mathbb{G}))$.

\subsection{\bf Primeness of crossed products by bi-exact quantum groups}
In this subsection, we consider only compact quantum groups of Kac type. Let $M=\hat{\mathbb{G}}\ltimes B$ be a finite von Neumann algebra as in the statement of Theorem \ref{A}. Assume that $B$ is amenable. 
Then we have for any amenable subalgebra $A\subset qMq$ $(q\in M$ is a projection), we have either (i) $A\preceq_M B$ or (ii) ${\cal N}_{qMq(A)''}$ is amenable. 
This is a sufficient condition to semisolidity when $B$ is abelian, and to semiprimeness, which means that any tensor decomposition has an amenable tensor component, when $B$ is non abelian. 
Thus we proved that any non amenable von Neumann subalgebra $N\subset p(\hat{\mathbb{G}}\ltimes B)p$ is prime when $B$ is abelian, and semiprime when $B$ is amenable.





\end{document}